\documentclass{amsart}%%%%%%%%%%%%{article}
\usepackage {amssymb}
\usepackage {amsmath}
\usepackage{amsthm}
\usepackage{graphicx}
\usepackage {amscd}
\usepackage {epic}
\usepackage[alphabetic]{amsrefs}
\usepackage[colorlinks, citecolor=blue]{hyperref}
\usepackage{comment}
\usepackage{tikz-cd}
\usetikzlibrary{cd}
\usepackage{soul,xcolor} %erase
\usepackage{enumitem}
\setstcolor{red}

%\input{diagrams.sty}
%\topmargin=-5mm
%\topmargin=0mm
%\textwidth=150mm \textheight=220mm \oddsidemargin=5mm
%%%%%%%%%%%%%%%%%%%%%%%%%%%%%%%%%%%%%%%%%%%%%%%%%%%%%%%%%%%%%

\theoremstyle{plain}
\newtheorem{theorem}{Theorem}[section]
\newtheorem{corollary}[theorem]{Corollary}
\newtheorem{lemma}[theorem]{Lemma}

\newtheorem{remark}[theorem]{Remark}
\newtheorem{conjecture}[theorem]{Conjecture}

\newtheorem{proposition}[theorem]{Proposition}

%%%  JK commands

%\newcommand{\st}[0]{\operatorname{St}}

\newcommand{\Pic}[0]{\operatorname{Pic}}

%%% end JK commands

\newcommand{\fram}{\mathfrak{m}}

\newcommand{\Z}{{\mathbb{Z} }}

\newcommand{\mX}{{\mathcal{X}}}

\newcommand{\OO}{{\mathcal{O}}}
\newcommand{\Q}{{\mathbb{Q}}}
\newcommand{\XX}{\mathcal{X}}
\newcommand{\YY}{\mathcal{Y}}
\newcommand{\ZZ}{\mathcal{Z}}

\newcommand{\Supp}{{\rm Supp}}

\DeclareMathOperator{\Spec}{Spec}
\DeclareMathOperator{\Ann}{Ann}

%\newcommand{\mot}{{\rm mot}}
%\begin{CJK*}{GBK}{Song}
% \theoremstyle{plain}
% \newtheorem{thm}[thm]{Theorem}%[section]
% \newtheorem{prop}[thm]{Proposition}%[section]
% \newtheorem{cor}[thm]{Corollary}
% \newtheorem{lem}[thm]{Lemma}
% \newtheorem{conj}[thm]{Conjecture}
% \theoremstyle{definition}
% \newtheorem{defn}{Definition}%[section]
% \newtheorem{rem}{Remark}
% \newtheorem{que}{Question}
% \newtheorem{exmp}{Example}%[section]
% \newtheorem{claim}{Claim}
% \newtheorem{assum}{Assumption}

%\newtheorem{lem}[thm]{Lemma}
%\newtheorem{cor}[thm]{Corollary}

%\newtheorem{prop}[thm]{Proposition}

%\newtheorem{conj}[thm]{Conjecture}
 %!!!!!!!!!!!!!!!!!!!!!!
%!!!!!!!!!!!!!!!!!!!!!!
%!!!!!!!!!!!!!!!!!!!!!!

\theoremstyle{definition}
\newtheorem{defn}[theorem]{Definition}

   %!!!!!!!!!!!!!!!!
    %!!!!!!!!!!!!!!!!!!!!
    %!!!!!!!!!!!!!!!!!!!!

%\renewcommand{\theremark}{}
            %\renewcommand{\thenote}{}
         %\renewcommand{\thesumm}{}
      % \renewcommand{\theack}{}

\newtheorem{defn-thm}[thm]{Definition--Theorem}  %!!!!!!!!!!!!!!!!!!!!!!!!
\newtheorem{defn-prop}[thm]{Definition--Proposition}  %!!!!!!!!!!!!!!!!!!!!!!!!
\newtheorem{defn-lem}[thm]{Definition--Lemma}  %!!!!!!!!!!!!!!!!!!!!!!!!
  %!!!!!!!!!!!!!!!!!!!!!!!!!!!

\theoremstyle{remark}
\newtheorem{claim}[theorem]{Claim}

%\newtheorem{principle}{Principle}

%The todo box

%The comment box
\def\commentbox#1{\textcolor{red}%
{\footnotesize\newline{\color{red}\fbox{\parbox{\textwidth}{\textbf{comment: } #1}}}\newline}}

\makeatletter
\newcommand*{\da@rightarrow}{\mathchar"0\hexnumber@\symAMSa 4B }
\newcommand*{\da@leftarrow}{\mathchar"0\hexnumber@\symAMSa 4C }
\newcommand*{\xdashrightarrow}[2][]{%
  \mathrel{%
    \mathpalette{\da@xarrow{#1}{#2}{}\da@rightarrow{\,}{}}{}%
  }%
}
\newcommand{\xdashleftarrow}[2][]{%
  \mathrel{%
    \mathpalette{\da@xarrow{#1}{#2}\da@leftarrow{}{}{\,}}{}%
  }%
}
\newcommand*{\da@xarrow}[7]{%
  % #1: below
  % #2: above
  % #3: arrow left
  % #4: arrow right
  % #5: space left 
  % #6: space right
  % #7: math style 
  \sbox0{$\ifx#7\scriptstyle\scriptscriptstyle\else\scriptstyle\fi#5#1#6\m@th$}%
  \sbox2{$\ifx#7\scriptstyle\scriptscriptstyle\else\scriptstyle\fi#5#2#6\m@th$}%
  \sbox4{$#7\dabar@\m@th$}%
  \dimen@=\wd0 %
  \ifdim\wd2 >\dimen@
    \dimen@=\wd2 %   
  \fi
  \count@=2 %
  \def\da@bars{\dabar@\dabar@}%
  \@whiledim\count@\wd4<\dimen@\do{%
    \advance\count@\@ne
    \expandafter\def\expandafter\da@bars\expandafter{%
      \da@bars
      \dabar@ 
    }%
  }%  
  \mathrel{#3}%
  \mathrel{%   
    \mathop{\da@bars}\limits
    \ifx\\#1\\%
    \else
      _{\copy0}%
    \fi
    \ifx\\#2\\%
    \else
      ^{\copy2}%
    \fi
  }%   
  \mathrel{#4}%
}
\makeatother

%\renewcommand{\abstractname}{}
%\input{diagrams.sty}
%\input{xy}
%\xyoption{all}

\begin{document}

\title[On the relative mmp for fourfolds]{On the relative Minimal Model Program for fourfolds in positive and mixed characteristic}
\author{Christopher Hacon} 
\address{Department of Mathematics \\  
University of Utah\\  
Salt Lake City, UT 84112, USA}
\email{hacon@math.utah.edu}
\author{Jakub Witaszek} 
\address{Department of Mathematics \\  
University of Michigan\\  
Ann Arbor, MI 48109, USA}
\email{jakubw@umich.edu}

\begin{abstract}
We show the validity of two special cases of the four-dimensional Minimal Model Program in characteristic $p>5$: for contractions to $\Q$-factorial fourfolds and in families over curves (``semi-stable mmp''). We also provide their mixed characteristic analogues. As a corollary, we show that liftability of positive characteristic threefolds is stable under the minimal model program, and that liftability of three-dimensional Calabi-Yau varieties is a birational invariant. Our results are partially contingent upon the existence of log resolutions.
\end{abstract}

\date{\today} 
\maketitle

\section{Introduction}
In recent years there has been much progress in understanding the birational geometry of threefolds over algebraically closed fields of characteristic $p>0$ by using methods inspired by the theory of F-singularities. One of the main achievements in this area is the proof of the minimal model program in dimension three (in the characteristic $p>3$ {case} and in the birational case, see \cite{hx13}, \cite{ctx13}, \cite{birkar13}, \cite{bw14}, \cite{HW19a}, \cite{HW19b}). The purpose of this paper is to show that the minimal model program also holds in some higher dimensional cases. This is the first step towards establishing the minimal model program in full generality for fourfolds in high characteristic. In view of the recent developments in mixed characteristic (\cite{BMPSTWW20}), we are also able to provide results on the minimal model program for arithmetic fourfolds.

In particular, we prove the following.
\begin{theorem} \label{t-mmp+} Let $(Y,\Delta)$ be a four-dimensional $\Q$-factorial dlt pair defined over a perfect field of characteristic $p>5$ or a DVR of characteristic $(0,p)$ for $p>5$ with a perfect residue field. Suppose that $\Delta$ has standard coefficients or that log resolutions of all log pairs with the underlying variety being birational to $Y$ exist (and are given by a sequence of blow-ups along the non-snc locus). Let $\pi \colon Y\to X$ be a projective birational morphism to a normal $\Q$-factorial variety $X$ such that ${\rm Ex}(\pi )\subset \lfloor \Delta \rfloor$. Then we may run a $(K_Y+\Delta)$-minimal model program over $X$,  which terminates with a minimal model.
\end{theorem}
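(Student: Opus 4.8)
The plan is to run a $(K_Y+\Delta)$-minimal model program over $X$ with scaling of a sufficiently general $\pi$-ample $\mathbb{Q}$-divisor, and to reduce every step — and the termination — to the established three-dimensional theory applied to $\lfloor\Delta\rfloor$. Since $\pi$ is birational, no Mori fibre space can occur; hence once each step is shown to exist and the program is shown to terminate, it terminates with a minimal model over $X$. So it suffices to establish (i) the existence of each step, and (ii) termination.

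\textbf{Reduction to the ``special'' situation.} The role of the hypothesis ${\rm Ex}(\pi)\subseteq\lfloor\Delta\rfloor$ is that it persists along the program and forces every step to be governed by $\lfloor\Delta\rfloor$. If $R$ is a $(K_Y+\Delta)$-negative extremal ray over $X$, it is spanned by a curve $C$ contracted by $\pi$, so $C\subseteq{\rm Ex}(\pi)\subseteq\lfloor\Delta\rfloor$ and $C$ lies on some component $S$ of $\lfloor\Delta\rfloor$. By adjunction $(K_Y+\Delta)|_S=K_S+\Delta_S$, where $(S,\Delta_S)$ is a three-dimensional dlt pair inheriting the hypotheses on coefficients/resolutions, and $C$ spans a $(K_S+\Delta_S)$-negative extremal ray over $X$; moreover one checks using the negativity lemma that the associated contraction is of pre-limiting type, i.e.\ some component of $\lfloor\Delta\rfloor$ is relatively negative on $R$. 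After a divisorial contraction or a flip over $X$, the new pair is again $\mathbb{Q}$-factorial dlt with the exceptional locus over $X$ inside the new reduced boundary, so this analysis persists. A divisorial contraction contracts a component of $\lfloor\Delta\rfloor$ and so drops $\rho(\cdot/X)$; hence only finitely many occur, and the real issue is the existence and termination of flips.

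\textbf{Existence of flips.} Fix $R$ and $S$ as above. Running the three-dimensional minimal model program on (a $\mathbb{Q}$-factorialization of) $(S,\Delta_S)$ over $X$, which is valid over $F$-finite fields of characteristic $p>5$, together with three-dimensional log abundance, one obtains that the relevant restricted (adjoint) algebras on $S$ are finitely generated. Via the restriction exact sequences along $S$ — whose surjectivity needs only relative Serre vanishing, which is characteristic-free and applies here because $-S$ is relatively ample for a pre-limiting contraction — this upgrades to finite generation of the corresponding algebras on $Y$ over $X$; taking relative $\operatorname{Proj}$ over $X$ produces the extremal contraction $Y\to Z$ over $X$, and when $R$ is small the flip $Y^+\to Z$, with $(Y^+,\Delta^+)$ again $\mathbb{Q}$-factorial dlt. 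Thus every step of the program exists.

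\textbf{Termination and the main obstacle.} Termination of an infinite sequence of flips over $X$ is handled by special termination: since the minimal model program and termination hold in dimension $\le 3$ in characteristic $p>5$, the flipping and flipped loci eventually avoid the generic points of the strata of $\lfloor\Delta\rfloor$ of codimension one, then two, and so on; but by the special assumption every flipping curve lies on $\lfloor\Delta\rfloor$, so after finitely many steps no flip remains. The bookkeeping of discrepancies along the dlt strata of $\lfloor\Delta\rfloor$, and the auxiliary three-dimensional programs on those strata, is exactly where the hypothesis that $\Delta$ has standard coefficients (so that only finitely many coefficients arise under iterated adjunction) or that log resolutions of the birational models of $Y$ exist is used. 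I expect the construction of flips — that is, the finite generation of these three-dimensional restricted algebras, resting on the full package of threefold minimal model theory (MMP and log abundance) in characteristic $p>5$ — to be the main technical hurdle, with carrying out special termination in positive characteristic without four-dimensional log resolutions a close second; by contrast, the persistence of the special structure means that no genuinely four-dimensional cone or base-point-free theorem is needed, as everything descends to the threefold $\lfloor\Delta\rfloor$.
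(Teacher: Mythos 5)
Your overall architecture is correct: reduce to the case where every extremal ray is governed by $\lfloor\Delta\rfloor$, construct contractions by restriction to a threefold stratum plus Keel's theorem, reduce flip existence to finite generation of a restricted algebra via Shokurov's pl-flip trick, and finish with special termination. The paper follows the same skeleton. But your justification of the central step --- the existence of flips --- contains a genuine error that glosses over the main technical contribution of the paper.

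You assert that the surjectivity of $H^0(Y,m(K_Y+\Delta))\to H^0(S,m(K_S+\Delta_S))$ for all sufficiently divisible $m$ (equivalently, that the restricted algebra equals the full adjoint algebra of $(S,\Delta_S)$, not just a subalgebra of it) ``needs only relative Serre vanishing, which is characteristic-free.'' This is false. Relative Serre vanishing gives $R^1f_*\mathcal{O}_Y(L+nA)=0$ only for $n\gg 0$ depending on $L$; it says nothing about the fixed twist $m(K_Y+\Delta)-S$. What one actually needs is a Kawamata--Viehweg-type vanishing or an extension theorem in the style of Hacon--McKernan/Takayama, and in positive characteristic these simply do not hold. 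This is exactly the problem the paper is written to solve: Proposition \ref{p-1} (leading to Theorem \ref{t-sflip}) proves this surjectivity by a descending-induction argument using $S^0$-sections for a \emph{relatively purely $F$-regular} pair, and the paper explicitly notes in Remark \ref{r-xu} that the surjectivity you are asserting is \emph{false} in general without a relatively ample component of the boundary, so the claim is not salvageable by a more careful application of elementary vanishing.

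Related to this, your reduction omits a crucial structural fact that makes the paper's flip construction work. Because $X$ is $\Q$-factorial and $\pi$ is birational, by \cite[Lemma 3.1]{HW19a} a flipping contraction over $X$ admits not only an $f$-anti-ample $\pi$-exceptional component $S\subset\lfloor\Delta\rfloor$ but also an $f$-\emph{ample} $\pi$-exceptional irreducible divisor $A\subset\lfloor\Delta\rfloor$. It is this pair $(S,A)$ in the boundary (with $S\cdot R<0$ and $A\cdot R>0$) that one feeds into Lemma \ref{l-2} and Lemma \ref{l-das} to establish that $(Y,S+(1-\epsilon)A+B)$ is relatively purely $F$-regular, which is the hypothesis of Proposition \ref{p-1}. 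Without invoking the ample component $A$, there is no mechanism in positive characteristic to produce the required lifts, and consequently no proof that the restricted algebra is the whole threefold adjoint algebra rather than a possibly non-finitely-generated subalgebra. Your termination discussion via special termination (in the form of Theorem \ref{t-st4}, keeping track of the dimensions of flipping/flipped loci inside $\lfloor\Delta\rfloor$) is essentially correct, and the reduction from general to standard coefficients via log resolutions is in the right spirit, but the heart of the argument --- why the flips exist --- is missing.
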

\noindent The assumption on the residue field being perfect is necessary as the three-dimensional base point free theorem is not known in full generality in {a more general setting (for example when the residue field is $F$-finite)}.

As in \cite{HW19a}, this theorem allows us to derive a series of results for $\Q$-factorial four dimensional singularities, which in characteristic zero are proven using vanishing theorems. In particular, we show that { positive characteristic klt singularities are $W\OO_X$-rational, dlt modifications exist, and the inversion of adjunction holds} contingent upon the existence of a log resolution (Corollary \ref{c-witt-rational}, \ref{c-dlt-mod}, \ref{c-inversion-of-adjunction}).  Similarly, one should be able to extend other statements from dimension three in positive characteristic to dimension four, such as finite generatedness of the local Picard group (cf.\ \cite{CSK20}) or, assuming the BAB conjecture in dimension three, finiteness of the tame \'etale fundamental group. To avoid making our article too long, we do not write down proofs of these results here. \\

The second key result of this paper is the proof that the minimal model program can be run in families of threefolds over a curve. Note that when $\phi$ is smooth or $X$ is semi-stable, the $K_X$-mmp and the $(K_X+\Supp\,\phi^{-1}(s))$-mmp coincide. In particular, the following theorem proves the validity of the semi-stable minimal model program.

\begin{theorem} \label{t-mmp-families+} 
Let $(X,\Delta)$ be a four-dimensional $\Q$-factorial dlt pair with standard coefficients projective over a DVR $R$ with perfect residue field of characteristic $p>5$. Let $s \in \Spec R$ be its special point and let $\phi \colon X \to \Spec R$  be the natural projection. When $R$ is purely positive-characteristic, we also assume that it is a local ring of a curve $C$ defined over an algebraically closed field and that $(X,\Delta) := (\mX, \Phi) \times_C \Spec(R)$ for a $\Q$-factorial four-dimensional dlt pair  $(\mX, \Phi)$ which is projective over $C$.   
%Let $R$ be a local ring of a curve $C$ defined over an algebraically closed field of characteristic $p>5$ with $s \in \Spec R$ being the special point. Let $(\mX, \Phi)$ be a $\Q$-factorial dlt pair with standard coefficients of global dimension four and projective over the curve $C$  and let $(X,\Delta) := (\mX, \Phi) \times_C \Spec(R)$ with the natural projection $\phi \colon X \to \Spec R$. Suppose that log resolutions of all log pairs with the underlying variety being birational to $X$ exist (and are constructed by a sequence of blow-ups along the non-snc locus).

Further, suppose that log resolutions of all log pairs with the underlying variety being birational to $X$ exist (and are constructed by a sequence of blow-ups along the non-snc locus).

If $\kappa(K_X +\Delta\,/\,{\rm Spec}(R))\geq 0$ and ${\rm Supp}(\phi^{-1}(s))\subset  \lfloor \Delta \rfloor$, then we can run an arbitrary $(K_X+\Delta)$-minimal model program over $\Spec R$ which terminates with a minimal model. In particular, every sequence of steps of the mmp terminates. %In mixed characteristic, every sequence of steps of the mmp terminates, while in positive characteristic the above mmp is an mmp with scaling.
\end{theorem}
\noindent In mixed characteristic it is enough to assume that $K_X+\Delta$ is pseudo-effective, as then $\kappa(K_X+\Delta)\geq 0$ holds by the non-vanishing theorem for characteristic zero threefolds.

Theorem \ref{t-mmp-families+} can be used, for example, to construct canonical skeletons of degenerations of varieties with non-negative Kodaira dimension (\cite{NX16}), and allows for descending deformations of threefolds under operations of the minimal model program. In particular, we are able to provide an answer to \cite[Question 5.2]{HW17} for lifts to characteristic zero contingent upon the resolutions of singularities. This gives a new insight into \cite{LS14}.
\begin{theorem} \label{t-mmp-liftability} Let $(R,\mathfrak{m})$ be a DVR of mixed characteristic $(0,p)$ for $p>5$ and with a perfect residue field $k$. 

Let $X$ be a three-dimensional $\Q$-factorial terminal variety projective over $k$. Suppose that $K_X$ is pseudo-effective and that $X$ lifts to a scheme $\XX$ over $R$. Further, assume that log resolutions of all log pairs with the underlying variety being birational to $\XX$ exist (and are given by a sequence of blow-ups along the non-snc locus). Then the following hold.
\begin{enumerate}
    \item A (possibly non-$\Q$-factorial) minimal model of $X$ lifts over $R$.
    \item If $N^1(\XX/\Spec R)\to N^1(X)$ is surjective, then
    \begin{enumerate}
        \item every sequence of steps of a $K_X$-mmp lifts over $R$, and
        \item all $\Q$-factorial minimal models of $X$ lift over $R$.
    \end{enumerate} 
    %In particular, minimal models of $X$ given as outputs of $K_X$-mmps lift over $R$.
    \item If $K_X$ is big, then the canonical model of $X$ lifts over $R$.
\end{enumerate}
\end{theorem} 
\noindent Here, $N^1(X) = \Pic(X) \otimes \Q /{\equiv}$; this is the set of $\Q$-line bundles up to numerical equivalence. Further, we say that a scheme $X$ \emph{lifts} over $R$ if there exists a flat projective morphism $\XX \to \Spec R$ such that the central fibre $\XX_{\fram}$ is isomorphic to $X$. %Last, note that not every minimal model of $X$ in (2a) needs to be an output of a $K_X$-mmp as different minimal models may be connected by flops.

As a corollary to (2b), we get that liftability of three-dimensional terminal Calabi-Yau varieties is a birational invariant. Here we say that a projective variety $X$ is \emph{Calabi-Yau} if $\omega_X$ is trivial and $H^i(X,\OO_X)=0$ for $0 < i < \mathrm{dim} X$.
\begin{theorem} \label{t-calabi-yau-liftability}Let $(R,\mathfrak{m})$ be a complete DVR of mixed characteristic $(0,p)$ for $p>5$ and with a perfect residue field $k$. 

 Let $X$ and $Y$ be three-dimensional terminal $\Q$-factorial projective Calabi-Yau varieties defined over $k$. Suppose that $X$ and $Y$ are birational and that $X$ lifts to a scheme $\mathcal{X}$ over $R$. Further, assume that log resolutions of all log pairs with the underlying variety being birational to $\mathcal{X}$ exist (and are given by a sequence of blow-ups along the non-snc locus). 
 
 Then $Y$ lifts over $R$ as well.
\end{theorem}
\noindent It is necessary to assume that $R$ is complete to guarantee that $N^1(\XX/\Spec R) \to N^1(X)$ is surjective (see Lemma \ref{l-lblift}).\\

It is natural to wonder if the assumption on the existence of log resolutions can be dropped. In general, this seems to be a very difficult problem in view of the fact that alterations or quasi-resolutions do not behave nicely with respect to the canonical divisor. However, one may hope to remove this assumption if we start to run the mmp on a regular variety, and this is, to some extent, the case (see Proposition \ref{p-resolutions}) provided a much weaker assertion: that embedded log resolutions of subschemes of smooth fourfolds exist.  Note however, that the current proofs of Theorem \ref{t-mmp+} and of Theorem \ref{t-mmp-families+} (and so that of Theorem \ref{t-mmp-liftability} and Theorem \ref{t-calabi-yau-liftability} as well) require additionally that the log resolutions are constructed by a sequence of blow-ups along the non-snc locus (see the second paragraph of the proof of Theorem \ref{t-mmp+} and of the proof of Claim \ref{c-central}).
%\todo{Jakub: Proposition \ref{p-resolutions} is not strong enough as it doesn't give that the resolution is constructed by a sequence of blow-ups along the non-snc locus.}

%It is expected that resolution of singularities holds in this context.
\subsection*{The idea of the proof of Theorem \ref{t-mmp+}}
Here, we explain the proof in positive characteristic. The mixed characteristic case is analogous with $S^0$ and $F$-regularity replaced by $B^0$ and $+$-regularity, respectively, as developed in \cite{BMPSTWW20}.

By the same arguments as in \cite{birkar13}, we can assume that $\Delta$ has standard coefficients. Our strategy follows the ideas developed in \cite{HW19a}. The main new difficulty is to show the existence of pl-flips occuring in the above mmp. As observed in \cite{HW19a}, the corresponding pl-flipping contractions contain a relatively ample divisor in the boundary. In particular, by applying inversion of F-adjunction twice and using the F-regularity of relative log Fano surfaces in the birational case (see \cite[Theorem 3.1]{hx13}) we can deduce that these pl-flipping contractions are relatively purely F-regular (up to a small perturbation). 

Unluckily, in contrast to the three-dimensional case (\cite{hx13}), this is not enough to conclude the existence of flips. Indeed, \cite{hx13} employs a strategy of Shokurov for constructing flips which is not valid in higher dimensions. On the other hand, it is not clear how to generalise the higher dimensional proof of the existence of flips (\cite{HMK10}) from characteristic zero to positive characteristic in this setting, as the proof calls for applying vanishing theorems on a log resolution, in which case the relative pure F-regularity is lost.

We address this problem by finding a new way of constructing flips when the boundary contains a relatively ample divisor.  
\begin{theorem}[{cf.\ Theorem \ref{t-sflip}}] \label{t-flips_exist+} Let $(X,S+A+B)$ be an $n$-dimensional $\Q$-factorial dlt pair defined over an $F$-finite field of characteristic $p>0$ and let $f \colon X \to Z$ be a $(K_X+S+A+B)$-flipping contraction with $\rho(X/Z)=1$ where $S$ is an $f$-anti-ample divisor, $A$ is an $f$-ample divisor, and $\lfloor B \rfloor = 0$. Suppose that the minimal model program is valid in dimension $n-1$. If $(X,S+(1-\epsilon)A+B)$ is relatively purely F-regular for all $0 < \epsilon < 1$, then the flip of $f$ exists.
\end{theorem}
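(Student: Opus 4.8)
The plan is to prove that the relative log canonical algebra
\[
\mathfrak R \;:=\; \bigoplus_{m\ge 0} f_*\OO_X\bigl(\lfloor m(K_X+\Delta)\rfloor\bigr),\qquad \Delta:=S+A+B,
\]
is a finitely generated $\OO_Z$-algebra; since $f$ is a flipping contraction this yields the flip as $\operatorname{Proj}_Z\mathfrak R$. Because $-S$ is $f$-ample, $S$ is a normal prime divisor, and $X$ is $\Q$-factorial, Shokurov's reduction for pl-flips applies, and it is insensitive to the characteristic (cf.\ \cite{hx13}), so it suffices to prove that the restricted algebra $\mathfrak R_S:=\bigoplus_{m\ge 0}\operatorname{Im}(\rho_m)$ is a finitely generated $\OO_T$-algebra, where $T:=f(S)$, $f|_S\colon S\to T$ is the (projective, birational) induced morphism, and $\rho_m\colon f_*\OO_X(\lfloor m(K_X+\Delta)\rfloor)\to (f|_S)_*\OO_S(\lfloor m(K_X+\Delta)\rfloor|_S)$ is the restriction homomorphism.

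I would first record the adjunction datum on $S$. By F-adjunction $(K_X+\Delta)|_S=K_S+\Delta_S$ for an effective $\Q$-Cartier $\Q$-divisor $\Delta_S$, and since $-(K_X+\Delta)$ is $f$-ample so is $-(K_S+\Delta_S)$ over $f|_S$; hence $(S,\Delta_S)\to T$ is a relative log Fano pair of dimension $n-1$. Applying inversion of F-adjunction to the hypothesis that $(X,S+(1-\epsilon)A+B)$ is relatively purely F-regular shows, for every $0<\epsilon<1$, that the pair obtained by restricting $K_X+S+(1-\epsilon)A+B$ to $S$ is relatively globally F-regular over $T$.

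The heart of the argument, and the step I expect to be the main obstacle, is to show that each $\rho_m$ is surjective for $m$ sufficiently divisible, so that $\mathfrak R_S=\bigoplus_{m\ge 0}(f|_S)_*\OO_S(\lfloor m(K_S+\Delta_S)\rfloor)$. In characteristic zero this follows from relative Kawamata--Viehweg vanishing applied to $R^1f_*\OO_X(m(K_X+\Delta)-S)$, using $m(K_X+\Delta)-S=K_X+(m-1)(K_X+\Delta)+A+B$; but $(m-1)(K_X+\Delta)+A+B$ is $f$-anti-ample for $m\gg 0$, so this vanishing is unavailable, and this is exactly where the presence of the $f$-ample divisor $A$ in the boundary must be exploited. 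Writing $m(K_X+\Delta)=m(K_X+S+(1-\epsilon)A+B)+m\epsilon A$ with $m\epsilon A$ an integral effective divisor, I would feed the effective ``bonus'' divisor $m\epsilon A$ into the definition of relative pure F-regularity of $(X,S+(1-\epsilon)A+B)$ to produce, for $e\gg 0$, an $\OO_X$-linear splitting $F^e_*\OO_X(\lceil(p^e-1)(K_X+S+(1-\epsilon)A+B)\rceil+m\epsilon A)\to\OO_X$ compatible with $S$; then, twisting by $\OO_X(m(K_X+\Delta))$, using the $f$-ampleness of $\epsilon A$ to invoke relative Serre vanishing on the resulting auxiliary sheaf, and chasing the diagram, one concludes $f_*\OO_X(m(K_X+\Delta))\twoheadrightarrow(f|_S)_*\OO_S(m(K_S+\Delta_S))$. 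Making the roundings, the Frobenius powers $p^e$, and the perturbation $\epsilon A$ all match up is the delicate point; it is also the decisive departure from the three-dimensional surface-restriction argument of \cite{hx13}, which does not generalise.

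Finally, it remains to show that the relative canonical algebra $\bigoplus_{m\ge 0}(f|_S)_*\OO_S(\lfloor m(K_S+\Delta_S)\rfloor)$ of the relative log Fano pair $(S,\Delta_S)\to T$ is a finitely generated $\OO_T$-algebra; this is where the validity of the minimal model program in dimension $n-1$ is used. Since $\dim S=n-1$ and every curve contracted by $f|_S$ is $(K_S+\Delta_S)$-negative, one may run the $(K_S+\Delta_S)$-minimal model program over $T$, which terminates with the relative canonical model of $(S,\Delta_S)$ over $T$, giving the finite generation. Combining this with the Shokurov reduction, $\mathfrak R$ is finitely generated, so the flip of $f$ exists; the $\Q$-factoriality of $\operatorname{Proj}_Z\mathfrak R$ and the equality $\rho(X^+/Z)=1$ then follow from the standard properties of flips.
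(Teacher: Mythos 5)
Your overall skeleton matches the paper's: reduce to pl-flips via Shokurov, prove that restriction maps
\[
H^0(X, m(K_X+S+A+B))\longrightarrow H^0(S, m(K_S+A_S+B_S))
\]
are surjective for $m$ sufficiently divisible, then conclude finite generation of the restricted algebra from the MMP in dimension $n-1$. The final step and the use of F-adjunction are also essentially the paper's.

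The gap is in the middle step, which you correctly identify as the heart of the matter but do not actually resolve. Your proposed mechanism --- decompose $m(K_X+\Delta)=m(K_X+S+(1-\epsilon)A+B)+m\epsilon A$, feed the effective divisor $m\epsilon A$ into the definition of relative pure F-regularity to obtain a splitting, then twist by $\OO_X(m(K_X+\Delta))$ and invoke relative Serre vanishing --- does not close. After twisting, the divisor carried by the Frobenius pushforward contains a summand of order $p^e(m-1)(K_X+\Delta)$, which is violently $f$-anti-ample for $m\ge 2$; the $f$-ample term $\epsilon A$ (or any bounded multiple of it) cannot compensate, so relative Serre vanishing simply does not hold for the auxiliary sheaf and the diagram chase stalls. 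This is exactly why a single-shot argument fails and why the theorem is not a routine consequence of pure F-regularity.

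The paper's Proposition~\ref{p-1} gets around this by a \emph{descending} induction on an auxiliary twist. Fix $F\in |k(K_S+A_S+B_S)|$ and construct $G_m\in |k(K_X+S+A+B)+mA|$ with $G_m|_S=F+mA_S$, starting from $m=M\gg 0$ (where Serre vanishing is free because $mA$ is very ample) and decreasing to $m=0$. The crucial trick at each step is to use the \emph{previously constructed effective lift} $G_{m+1}$ to rewrite
\[
k(K_X+S+A+B)+mA = K_X+S+B + \tfrac{k-1}{k}G_{m+1} + \tfrac{m+1}{k}A,
\]
so that the anti-ample portion $(k-1)(K_X+S+A+B)+(m+1)A$ is replaced by the effective $\Q$-divisor $\tfrac{k-1}{k}G_{m+1}$, which can be absorbed into the boundary. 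Then $L-(K_X+S+B+\tfrac{k-1}{k}G_{m+1})\sim_\Q\tfrac{m+1}{k}A$ is $f$-ample, Proposition~\ref{p-ext} gives the surjection on $S^0$, and Lemma~\ref{l-1} (applied on $S$ using relative global F-regularity of $(S,B_S+(1-\epsilon)A_S)$ and the inequality $F+mA_S\ge \tfrac{k-1}{k}G_{m+1}|_S-\tfrac{k-1}{k}A_S$) puts the relevant section of $F+mA_S$ into the image. The induction terminates at $m=0$ with the lift $G_0$ of $F$. This iterative replacement of anti-ample by effective is the key idea your sketch is missing; without it, there is no version of Serre vanishing available and the surjectivity cannot be established.
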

In fact, we show that $H^0(X, m(K_X+S+A+B))\to H^0(S, m(K_{S}+A_{S}+B_{S}))$ is surjective for all $m>0$ sufficiently divisible where $K_S+A_S+B_S = (K_X+S+A+B)|_S$ (Proposition \ref{p-1}). In particular, the finite generatedness of canonical rings in dimension $n-1$ implies that the canonical ring of $(X,S+A+B)$ is finitely generated as well, and so its projectivisation is the sought-for flip. Note that this surjectivity is false without the ample divisor in the boundary (see Remark \ref{r-xu}).  

\subsection*{Mixed charactieristic}
The original version of this article covered the positive characteristic case only. A few months after its submission to the arxiv, \cite{BMPSTWW20} developed the mixed characteristic analogues of $S^0$ and $F$-regularity, and proved the validity of the three-dimensional minimal model program for arithmetic threefolds. In view of this and recent results on relative semiampleness (\cite{witaszek2021relative}), the original results of our article generalise to mixed characteristic with small modifications. The current updated version of our paper incorporates these generalisations and several applications thereof. 

We also note that results on the three-dimensional mmp in mixed characteristic in the semistable case were obtained in \cite{TY20} independently of \cite{BMPSTWW20}. The geometric part of their proof is based on the ideas of the original version of our article and on \cite{HW19a}. In particular, \cite{TY20} generalised our Theorem \ref{t-flips_exist+} to mixed characteristic before this updated version of our article was submitted.

The notions of $+$-regularity and $+$-stable sections ($B^0$) work in both positive and mixed characteristic, and so we could restate the whole article to avoid the use of $F$-regularity and Frobenius stable sections ($S^0$). We decided against doing so, instead treating positive and mixed characteristic cases separately, to prevent our article from being unnecessarily technical for the readers interested in the positive characteristic case only. The notion of $F$-regularity has been in use for a long time, in contrast to $+$-regularity which was introduced recently and is more technically involved. 

In the mixed characteristic case, some of our results are valid beyond the case of schemes defined over DVRs (see e.g.\ the setting of \cite{BMPSTWW20}). For the sake of readability and in order to avoid dealing with unnecessary technicalities, we do not strive to provide the most general version. Note however that there is a fundamental obstacle to generalising Theorem \ref{t-mmp-families+} to the case of Dedekind domains such as $\Spec \mathbb{Z}$ (or $\Spec k[t])$) due to the issues with termination of flips; indeed, over such a base it could \emph{a priori} happen that there is an infinite sequence of flipping curves contained in fibres over different prime numbers. Furthermore, Theorem \ref{t-mmp-families+} needs the residue field to be perfect so that we can invoke the three-dimensional base point free theorem as well as \cite{NT20}, and the positive characteristic case thereof requires essentially that the base spreads out  over an algebraically closed field so that we can apply positive characteristic Bertini theorems from \cite{SZ13}. %Last, let us point out that it is likely that the assumption on the field being $F$-finite in Theorem \ref{t-mmp+} can be dropped.}

%; on the other hand the assumption and Theorem \ref{t-mmp-families+} can be dropped, but, at this point, many references on the three-dimensional mmp in positive characteristic we refer to do require this assumption.} 

To align with the notation in the positive characteristic case, we use the word \emph{variety} (as defined in the preliminaries below) even for schemes over DVRs. We hope that this will not cause any confusion for the reader.

\section{Preliminaries}
A scheme $X$ will be called a \emph{variety} if it is integral, separated, and of finite type over a field $k$ or a divisorial valuation ring $R$. Throughout this paper, unless otherwise stated, we work over $F$-finite fields $k$ of characteristic $p>0$ or DVRs $R$ of characteristic $(0,p)$ for $p\geq  0$.

We refer the reader to \cite{KM98} for the standard definitions and results of the minimal model program, to \cite{HW19a} for a brief introduction to F-singularities, and to \cite{BMPSTWW20} for the results on $+$-regularity. Further, we refer to \cite[Remark 2.7]{GNT16} for a discussion on  properties of varieties which are independent of a base change from a perfect field. We warn the reader that $\Q$-factoriality is not such a property.

In this paper, a {\it pair} $(X,B)$ consists of a normal variety $X$ and an effective $\Q$-divisor $B$ such that $K_X+B$ is $\Q$-Cartier. The pair $(X,B)$ is {\it kawamata log terminal} (\emph{klt}) (resp.\ {\it log canonical} (\emph{lc})) if for any proper birational morphism $f \colon Y\to X$ and any prime divisor $E$ on $Y$ we have ${\rm mult }_E(B_Y)<1$ (resp.\ ${\rm mult }_E(B_Y)\leq 1$) where $K_Y+B_Y=f^*(K_X+B)$.
If $(X,B)$ admits a log resolution $f \colon Y\to X$, then it suffices to check the above condition for all prime divisors $E$ on $Y$. 

The definitions of singularities in birational geometry may be found in \cite{kollar13} and \cite[Section 2.5]{BMPSTWW20}. We say that a pair $(X,B)$ such that $B=\sum b_iB_i$ with $0\leq b_i\leq 1$ is {\it divisorially log terminal} (\emph{dlt}) if there exists an open subset $U\subset X$ such that $U$ is regular and $B|_U$ has simple normal crossings support and for every proper birational morphism $f \colon Y\to X$ and any prime divisor $E$ on $Y$ with centre $Z$ contained in $X\setminus U$, we have ${\rm mult }_E(B_Y)<1$.
We say that $a_E(X,B):=1-{\rm mult }_E(B_Y)$ is {\it the log discrepancy} of $(X,B)$ along $E$.
A pair $(X,S+B)$ with $\lfloor S+B\rfloor =S$ irreducible, is {\it purely log terminal} ({\it plt}) if $a_E(X,S+B)>0$ for any $E\ne S$.

A morphism of schemes $f\colon X\to Y$ is a {\it universal homeomorphism} if for any morphism $Y'\to Y$, the induced morphism $X'=X\times _Y Y'\to Y'$ is a homeomorphism. In positive characteristic, it is known that a finite morphism is a universal homeomorphism if and only if it factors a sufficiently high power of the Frobenius morphism. We say that a variety $X$ is {\it normal up to universal homeomorphism} if its normalisation $X^\nu \to X$ is a universal homeomorphism. {We learned the following result from J\'anos Koll\'ar.
\begin{lemma}\label{c-nuuh} Let $(X,D+\Delta)$ be a dlt pair with $D$ prime and $\Q$-Cartier. Then $D$ is normal up to universal homeomorphism.
\end{lemma}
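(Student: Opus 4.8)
The assertion is local on $X$, so assume $X = \Spec R$ with $R$ local. The plan is to show that the normalization $\nu \colon D^\nu \to D$ --- which is finite, birational and surjective --- is a universal homeomorphism, equivalently that it is radicial, equivalently that $D$ is geometrically unibranch at every point, equivalently that every geometric fibre of $\nu$ is a single point. I would read off the geometric branches of $D$ on a resolution: fix a log resolution $\pi \colon Y \to X$ of $(X, D + \Delta)$ (available under the running hypotheses on log resolutions, and unconditional when $\dim X \le 2$), and let $D_Y \subseteq Y$ be the strict transform of $D$; it is smooth and irreducible, and $\pi|_{D_Y}$ factors as $D_Y \xrightarrow{g} D^\nu \xrightarrow{\nu} D$ with $g_* \mathcal O_{D_Y} = \mathcal O_{D^\nu}$ by normality of $D^\nu$. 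Hence $g$ has geometrically connected fibres, and for $x \in X$ the number of geometric branches of $D$ at $x$ equals the number of geometrically connected components of $D_Y \cap \pi^{-1}(x)$. So it suffices to prove that $D_Y \cap \pi^{-1}(x)$ is geometrically connected for every $x$. Equivalently, pushing forward $0 \to \mathcal O_Y(-D_Y) \to \mathcal O_Y \to \mathcal O_{D_Y} \to 0$ and using $\pi_* \mathcal O_Y = \mathcal O_X$, the cokernel of $\mathcal O_D \hookrightarrow \pi_* \mathcal O_{D_Y} = \mathcal O_{D^\nu}$ embeds into $R^1\pi_* \mathcal O_Y(-D_Y)$, and what must be shown is that, locally, some Frobenius power of $\mathcal O_{D^\nu}$ lies in $\mathcal O_D$; in characteristic zero one simply proves $R^1\pi_* \mathcal O_Y(-D_Y) = 0$ and concludes $D^\nu = D$.

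To organize the argument, first reduce to the case $\lfloor D + \Delta \rfloor = D$. One may assume $x$ lies in the non-normal locus of $D$, otherwise $\nu$ is an isomorphism over $x$. If no round-down component of $D + \Delta$ other than $D$ passes through $x$, then $(X, D + \Delta)$ is plt near $x$ (a dlt pair whose reduced boundary $\lfloor D+\Delta\rfloor$ is a single prime divisor is plt), and after shrinking $X$ we have $\lfloor D + \Delta \rfloor = D$. Otherwise, subtract $\sum_i \epsilon_i D_i$ from $\Delta$, where the $D_i \ne D$ are the round-down components through $x$ and $0 < \epsilon_i \ll 1$: this keeps the pair dlt --- decreasing the boundary only raises log discrepancies and enlarges the snc locus, and $K_X + D + \Delta - \sum_i \epsilon_i D_i$ stays $\mathbb{Q}$-Cartier, using that the $D_i$ may be taken $\mathbb{Q}$-Cartier, if need be after a small $\mathbb{Q}$-factorialization of $X$ (which does not change $D^\nu$) --- and now $\lfloor D + \Delta \rfloor = D$ near $x$. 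Writing $K_Y + \tilde\Delta = \pi^*(K_X + D + \Delta)$, the pair $(Y, \tilde\Delta)$ has snc support and coefficients $\le 1$, and every $\pi$-exceptional prime divisor has coefficient $< 1$ (its centre lies in the non-snc locus, where dlt forces positive log discrepancy), so $\operatorname{Nklt}(Y, \tilde\Delta) = D_Y$ near $\pi^{-1}(x)$, while $-(K_Y + \tilde\Delta) = -\pi^*(K_X + D + \Delta)$ is numerically trivial, hence nef and big, over $X$. The Koll\'ar--Shokurov connectedness principle then gives that $D_Y \cap \pi^{-1}(x)$ is connected, and geometrically so.

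The crux is exactly this last input. The usual proof of the connectedness principle, or equivalently the vanishing $R^1\pi_* \mathcal O_Y(-D_Y) = 0$ --- here $-D_Y = K_Y - \lfloor \pi^*(K_X + D + \Delta) \rfloor$ up to an effective $\pi$-exceptional divisor, a Kawamata--Viehweg situation --- fails in characteristic $p$. I would circumvent this either by cutting $X$ with general hyperplane sections to reduce to $\dim X = 2$ (Bertini preserves dlt-ness, keeps $D$ prime and $\mathbb{Q}$-Cartier, and preserves the number of geometric branches of $D$ along the locus of interest, so one is reduced to a resolved dlt surface pair, where the required connectedness of $D_Y$ with the exceptional fibre is a characteristic-free combinatorial statement verifiable by a direct discrepancy computation), or by invoking the versions of the connectedness principle now available in positive characteristic for birational contractions. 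This is also precisely the step at which the conclusion weakens from ``$D$ is normal'' to ``$D$ is normal up to universal homeomorphism'': what survives of the vanishing in characteristic $p$ bounds the defect $\mathcal O_{D^\nu}/\mathcal O_D$ only up to a part killed by a Frobenius power --- a purely inseparable residue-field extension in a fibre of $\nu$ --- and this is exactly what cannot be, and need not be, excluded.
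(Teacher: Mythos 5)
Your proposal is a genuinely different argument from the paper's, and it has real gaps. The paper proves this lemma \emph{unconditionally} by a purely local argument: by a criterion of Koll\'ar (\cite[Theorem 41]{kollar16}), normality up to universal homeomorphism is equivalent to connectedness of $D^{\mathrm{sh}}\setminus\{x^{\mathrm{sh}}\}$ for the Henselisation at every $x\in D$; the codimension-$\le 2$ points are handled by surface theory, and for $x$ of codimension $\ge 3$ one passes to completions, uses that $X^{\wedge}$ is normal hence $S_2$ to get $X^{\wedge}\setminus\{x\}$ connected, and then invokes Grothendieck's local Lefschetz connectedness theorem (\cite[XIII.2, Theorem 2.1]{Grothendieck62}) for a Cartier multiple of $D^{\wedge}$. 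No resolution of singularities and no Koll\'ar--Shokurov connectedness is needed. Your route, by contrast, is resolution-based.

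Concretely, here is where your argument is incomplete. First, it needs a log resolution of $(X,D+\Delta)$, but the lemma as stated carries no resolution hypothesis and is used in the paper precisely because it is unconditional in all dimensions; in dimension four your reduction would only be valid under the paper's auxiliary resolution assumption, while the paper's proof avoids this entirely. Second, your reduction to $\lfloor D+\Delta\rfloor=D$ perturbs $\Delta$ by $\sum\epsilon_i D_i$ and needs the $D_i$ to be $\Q$-Cartier; you note this can be arranged after a small $\Q$-factorialisation, but the existence of $\Q$-factorialisations (a dlt modification) in dimension $\ge 4$ is one of the MMP-type results this lemma is itself used to establish, so invoking it risks circularity and in any case is not available at the point the lemma is used. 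Third, the final step — the connectedness of $D_Y\cap\pi^{-1}(x)$ — is left to one of two sketches, neither of which closes the argument: cutting by general hyperplane sections can only meet the non-normal locus at its generic points of positive-dimensional components, so it cannot detect (non-geometrically-unibranch) behaviour at closed points of $X$ lying in $D$ (which do occur and must be handled — this is exactly the codimension $\ge 3$ case the paper treats with the completion argument), and Bertini through a fixed point is weaker in positive characteristic than you assume; while ``versions of the connectedness principle in positive characteristic'' in dimension $\ge 3$ typically rely on the very MMP machinery being constructed here.

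In short, your strategy (resolution $+$ connectedness of the non-klt locus) is the characteristic-zero way to prove that lc centres of dlt pairs are normal, and identifying the failure mode as ``purely inseparable gluing survives'' is the right heuristic for why one only gets universal homeomorphism in characteristic $p$. But the paper's proof deliberately sidesteps both resolutions and the Koll\'ar--Shokurov connectedness by working with Henselisations/completions and Grothendieck's local connectedness theorem, which is why it applies without any auxiliary hypotheses. You should either adopt that local argument or, if you pursue your route, (a) restrict to the cases where the required resolutions and $\Q$-factorialisations are genuinely available without circularity, and (b) give a complete argument for connectedness of $D_Y\cap\pi^{-1}(x)$ at \emph{all} points, including isolated ones.
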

\begin{proof}
By \cite[Theorem 41]{kollar16}, it is enough to show that $D^{\mathrm sh} \,\backslash\, \{x^{\mathrm sh}\}$ is connected for the Henselisation $D^{\mathrm sh}$ of $D$ at every point $x \in D$. If $x$ is a point of codimension at most two in $X$, then this follows by surface theory (cf.\ \cite{kollar13}) as $(X,D+\Delta)$ is dlt. Thus, we may assume that $x$ is of codimension at least three in $X$. Let $X^{\wedge}$ and $D^{\wedge}$ be the completions of $X$ and $D$ at $x$, respectively. It is enough to show that $D^{\wedge} \,\backslash\, \{x\}$ is connected. Note that $X^{\wedge}$ is normal (\cite[Tag 033C, 07GC, 037D]{stacks-project} and \cite[Lemma 2.7]{popescu00}), hence $S_2$, and so $X^{\wedge} \,\backslash\, \{x\}$ is connected. Moreover, $X^{\wedge} \,\backslash\, \{x\}$ is S2 at every closed point $x' \in X^{\wedge} \,\backslash\, \{x\}$ (as the scheme is Catenary, the codimension of $x'$ in $X^{\wedge}$ is at least two). Since $D^{\wedge}$ is of codimension one (\cite[Tag 07NV]{stacks-project}) and $\Q$-Cartier, we can apply \cite[XIII.2, Theorem 2.1, p.\ 139]{Grothendieck62} to a Cartier multiple of $D^{\wedge}$ to conclude the proof. 
%ever the strict Henselisation at every point of $D$ 
%We know that the statement is true after localising at every point of codimension two by surface theory. Thus, it is enough to show that for every point $x \in D$ of codimension at least three, the complement of the closed point in the completion of $X$ at $x$ is connected.\footnote{Jakub: I am a bit worried about the difference between connected and geometrically connected} This follows from \cite[XIII.2]{Grothendieck62}.
\end{proof}}

We refer to \cite{GNT16} and \cite[Subsection 2.3]{HW19a} for the notion of $W\OO$-rational singularities in positive characteristic. Since log resolutions of singularities are not known to exist beyond dimension three, a positive characteristic singularity $X$ is called \emph{$W\OO$-rational} if $R^if_*W\OO_{V,\Q}= 0$ for $i>0$ and every quasi-resolution $f \colon V \to X$ (see \cite[Section 3]{GNT16}). Note that it is enough to verify this condition on a single quasi-resolution (\cite[Corollary 4.5.1]{CR12}). In particular, if $X$ admits a usual resolution of singularities, we can verify this condition on it.

\begin{lemma}\label{l-dltad} Let $(X,\Delta)$ be a $\Q$-factorial dlt pair and $S$ an irreducible component of $\lfloor \Delta \rfloor$. If $S^\nu \to S$ is the normalisation and $K_{S^\nu}+\Delta _{S^\nu}=(K_X+\Delta )|_{S^\nu}$, then $(S^\nu,\Delta _{S^\nu})$ is dlt and there is a bijection between the strata of $\lfloor \Delta _{S^\nu}\rfloor$ and the strata of  $\lfloor \Delta \rfloor$ that are contained in $S$. 

Let $P\in S$ be any codimension 1 point on $S$ and $m$ the Cartier index of $K_X$ at $P$. Then on a neighbourhood of $P\in X$,  $mD$ is Cartier for any divisor $D$, and $S$ is normal, and $(K_X+S)|_S=K_S+(1-\frac 1 m )P$.
\end{lemma}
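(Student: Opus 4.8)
The plan is to establish the local statement of the last paragraph first, by reducing to the two-dimensional case, and then to bootstrap from it to the dlt-ness of $(S^\nu,\Delta_{S^\nu})$ and the bijection of strata.

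\emph{The local statement.} Fix a codimension-$1$ point $P\in S$, equivalently a codimension-$2$ point of $X$, so that $\mathcal{O}_{X,P}$ is a two-dimensional excellent normal $\Q$-factorial local ring. If $P$ lies in the open snc locus $U$ of $(X,\Delta)$ -- in particular if $P$ lies on a second component of $\lfloor\Delta\rfloor$, since by the structure of dlt pairs every codimension-$2$ stratum of $\lfloor\Delta\rfloor$ has generic point in $U$ -- then $X$ is regular and $\Delta$ has snc support at $P$, so $m=1$, $S$ is regular, and $(K_X+S)|_S=K_S$ by ordinary adjunction. Otherwise, write $\Delta=S+B$ near $P$; then $\lfloor B\rfloor$ has no component through $P$, so $(\mathcal{O}_{X,P},S+B)$ is a two-dimensional dlt pair with $\lfloor S+B\rfloor=S$, hence plt. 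Invoking the classification of two-dimensional purely log terminal singularities, $S$ is regular at $P$, the local divisor class group is finite cyclic, and the index $m$ of the statement generates it; therefore $mD$ is Cartier in a neighbourhood of $P$ for every Weil divisor $D$, and the standard formula for the different of a plt surface pair (cf.\ \cite{KM98}) yields $(K_X+S)|_S=K_S+(1-\tfrac1m)P$.

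\emph{Dlt-ness and strata.} By the previous step $S$ is normal at every codimension-$\leq 1$ point, so $\sigma\colon S^\nu\to S$ is an isomorphism over those points, $\Delta_{S^\nu}=\mathrm{Diff}_{S^\nu}(\Delta-S)$ is an honest effective boundary, and the components of $\lfloor\Delta_{S^\nu}\rfloor$ are exactly the (reduced) preimages of the codimension-$2$ strata of $\lfloor\Delta\rfloor$ contained in $S$ -- the remaining contributions to the different, coming from codimension-$1$ points of $S$, have coefficients $1-\tfrac1m<1$. Let $V=\sigma^{-1}(S\cap U)$; then $V$ is regular and $\Delta_{S^\nu}|_V$ has snc support, so $(S^\nu,\Delta_{S^\nu})$ has a dense open snc locus. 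To see it is dlt I would show $a_E(S^\nu,\Delta_{S^\nu})>0$ for every prime divisor $E$ over $S^\nu$ with centre not contained in $S^\nu\setminus V$: by the valuation-theoretic comparison of log discrepancies under adjunction, there is a prime divisor $E'$ over $X$ with $\mathrm{center}_X(E')=\sigma(\mathrm{center}_{S^\nu}(E))$ and $a_E(S^\nu,\Delta_{S^\nu})=a_{E'}(X,\Delta)$; since $\mathrm{center}_{S^\nu}(E)$ meets $V$, $\mathrm{center}_X(E')$ meets $U$, so $a_{E'}(X,\Delta)>0$ because $(X,\Delta)$ is dlt, and hence $a_E>0$. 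The same comparison gives the strata bijection: strata of $\lfloor\Delta_{S^\nu}\rfloor$ are closures of centres of divisors with log discrepancy $0$ of the appropriate codimension, these all generise into $V$, and over $V$ they correspond bijectively -- by the evident snc combinatorics -- to the strata of $\lfloor\Delta\rfloor$ contained in $S$.

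The surface computation is routine once the structure of log terminal surface singularities is available, so the crux -- and the step demanding care, given that log resolutions are unavailable -- is the comparison $a_E(S^\nu,\Delta_{S^\nu})=a_{E'}(X,\Delta)$ for divisors $E$ over $S^\nu$ whose centre on $X$ has codimension at least $3$; this must be carried out through the intrinsic theory of the different rather than via a log resolution of $X$. A secondary point to verify carefully is the normality of $S$ in codimension $1$ at the finitely many codimension-$1$ points of $S$ lying outside $U$, which is exactly what makes the passage to $S^\nu$ in the second step harmless.
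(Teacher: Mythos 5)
Your outline of the \emph{local} statement is essentially the paper's: both proofs localise at the codimension-$2$ point $P\in X$ and appeal to the structure theory of two-dimensional (p)lt singularities, and your split between the snc case and the plt case is the right one. The reduction of the dlt-ness claim to controlling log discrepancies $a_E(S^\nu,\Delta_{S^\nu})$ for $E$ whose centre does not meet $V=\sigma^{-1}(S\cap U)$ is also correct.

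The gap is precisely the step you flag as ``the crux'': you invoke ``the valuation-theoretic comparison of log discrepancies under adjunction'' to produce a divisor $E'$ over $X$ with $a_E(S^\nu,\Delta_{S^\nu})=a_{E'}(X,\Delta)$, but you do not prove it, and you also mis-identify what the fix should be. This equality is \emph{not} an output of ``the intrinsic theory of the different'': the different only tells you about coefficients of $\Delta_{S^\nu}$ at codimension-$1$ points of $S^\nu$, while here $E$ is an arbitrary exceptional divisor over $S^\nu$, typically with centre of high codimension. In characteristic zero one compares both sides on a log resolution of $(X,\Delta)$ adapted to $S$; absent log resolutions (the whole point in this paper), one must construct the relevant model by hand. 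The paper's proof does exactly that: pick an ideal sheaf $I$ on $S$ such that $E$ appears as a divisor on the blow-up of $I$; push $I$ forward to an ideal sheaf $J\subset\mathcal{O}_X$ (with $J\mathcal{O}_S=I$), blow up $J$ on $X$ and normalise to obtain $X'\to X$ with strict transform $S'$ on which $E$ is a divisor; then, after further blow-ups, arrange an exceptional divisor $F\subset X'$ with $S'\cap F=E$ meeting $S'$ transversely at the generic point of $E$. At that generic point $X'$ is regular and $(S',F)$ is snc, so classical adjunction gives $a_E(S^\nu,\Delta_{S^\nu})=a_F(X,\Delta)>0$. Without this explicit construction — which is the real content of the lemma in positive characteristic — your argument for dlt-ness is not complete, and the strata bijection (which you derive from the same comparison) inherits the same gap.
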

\begin{proof} By assumption there is an open subset $U\subset X$ containing the generic points of every strata of $\lfloor \Delta \rfloor$ such that $(U,\lfloor \Delta \rfloor|_U)$ has simple normal crossings and the complement $Z=X\setminus U$ contains no non-klt centres.
Then, it is easy to see that $(S\cap U, \lfloor \Delta \rfloor|_{S\cap U})=(S^\nu\cap U,\lfloor \Delta _{S^\nu}\rfloor|_{S^\nu\cap U})$ has simple normal crossings. Let $E$ be an exceptional divisor over $S$ with centre $c_E$  whose generic point is not contained in $\lfloor \Delta \rfloor|_{S\cap U}$,  {and let $I$ be an ideal sheaf on $S$, the blow-up of which contains $E$. By blowing-up the pushforward of $I$ on $X$ and taking the normalisation, we may construct} a morphism  $X'\to X$ such that if $S'\subset X'$ is the strict transform of $S$, then $E$ is a divisor on $S'$.  After possibly further blow ups of $X$, we may assume that there is an exceptional divisor $F\subset X'$ such that $S'\cap F= E$ and $S',F$ intersect transversely at the generic point of $E$.
 But then the log discrepancies satisfy $a_E(S^\nu , \Delta _{S^\nu})=a_F(X,\Delta )>0$. The first part of the lemma now follows easily.
 
 The second part of the lemma follows by standard results for surfaces once we localise at $P\in X$.
\end{proof} 

We state an application of \cite{Witaszek2020KeelsTheorem}.
\begin{theorem}[{\cite[Theorem 2.22]{witaszek2021relative}}] \label{t-semiampless-universal-homeo} Let $f \colon Y \to X$ be a finite  universal homeomorphism of schemes which are proper over a Noetherian base scheme $S$. Let $\mathcal{L}$ be a nef line bundle on $X$ such that $\mathcal{L}|_{X_{\Q}}$ and $f^*\mathcal{L}$ are semiample, where $X_{\Q}$ is the characteristic zero fibre of $X \to \Spec \mathbb{Z}$. Then $\mathcal{L}$ is semiample.
\end{theorem}
% \begin{proof}
% Although not explicitly stated, the proof is identical to that of \cite[Theorem 1.10]{Witaszek2020KeelsTheorem} with $X^{\rm red}$ replaced by $Y$.
% \end{proof}
\subsection{F-regularity and $+$-regularity}
Since the work of Hacon and Xu \cite{hx13}, (global) F-regularity has been one of the main tools in the minimal model program in positive characteristics. In the local setting it gives a good analog of (and often coincides with) klt singularities and in the global setting it provides us with a log Fano structure and implies vanishing theorems.  For the convenience of the reader we recall several definitions and key results.
\begin{defn} Let $X$ be an $F$-finite scheme defined over a  field of characteristic $p>0$. Given an effective $\Q$-divisor $B$, we say that $(X,B) $ is {\it globally F-split}  if for every integer $e>0$, the natural homomorphism of $\OO _X$-modules
\[\OO _X\to F_*^e\OO _X(\lfloor (p^e-1)B\rfloor )\] splits.
 We say that  $(X,B) $ is  {\it globally F-regular} (resp.\ {\it purely globally F-regular}) if for every effective divisor $D$ on $X$ (resp.\ every $D\geq 0$ intersecting $\lfloor B \rfloor$ properly) and every integer $e\gg 0$, the natural homomorphism of $\OO _X$-modules
\[\OO _X\to F_*^e\OO _X(\lfloor (p^e-1)B\rfloor +D)\] splits.
\end{defn}
If the above splittings hold locally on $X$, then we refer to these notions as {\it F-purity}, {\it strong F-regularity}, and {\it pure F-regularity}, respectively. Given a morphism $f\colon X\to Y$, we say that $(X,B)$ is relatively (over $Y$) F-split, F-regular, or purely F-regular, if the above splittings hold locally over $Y$.

Let $L$ be a  divisor and $(X,B)$ be a log pair where  $p$ does not divide the index of $B$. We let
\[
S^0(X,B;L):=\bigcap\, {\rm Im}\left( H^0(X,F^e_*\OO _X((1-p^e)(K_X+B)+p^eL))\to H^0(X, \OO _X(L))\right),
\] 
where the intersection is taken over $e>0$ sufficiently divisible so that $(p^e-1)B$ is integral.
{Note that if $f\colon X\to U$ is a projective morphism to an affine variety such that $L$ is $\Q$-Cartier and $L-(K_X+B)$ is $f$-ample,  then (identifying $f_*\OO_X(L)$ with $H^0(X,\OO _X(L))$) the subsheaf $S^0(X,B;L)\subset 
f_*\OO_X(L)$ is coherent and equal to the single image for a sufficiently big and divisible $e>0$ (see the proof of \cite[Proposition 2.15]{hx13}).}\\

We quickly review the theory of $+$-regularity (\cite{BMPSTWW20}). Let $X$ be a normal integral scheme which is proper over a \emph{complete} Noetherian local domain $(R,\mathfrak{m})$ with characteristic $p>0$ residue field. Let $B \geq 0$ be a $\Q$-divisor on $X$. For a Weil divisor $L$ on $X$, we define the subspace of $+$-stable sections $B^0(X, B; \OO_X(L))$ to be
\[
\bigcap_{\substack{f \colon Y \to X\\ \mathrm{finite}}}{\rm Im}\,\big(H^0(Y, \OO_Y( K_Y + \lceil{f^* (L - K_X - B)}\rceil)) \to H^0(X, \OO_X(L))\big), 
\]
where the intersection taken over all finite covers $f \colon Y \to X$ by a normal integral scheme $Y$ (more precisely, here and throughout the article, this means that we take the intersection over the category of all finite covers equipped with an embedding of $K(Y)$ into a fixed algebraic closure $\overline{K(X)}$ of $K(X)$, see \cite[Convention 4.1]{BMPSTWW20}). 
When $X$ is not irreducible, but still normal, we define $B^0(X, B; \OO_X(L))$ to be the direct sum of $B^0$ for each connected (thus irreducible) component of $X$.

Similarly, for a reduced divisor $S$ having no common components with an effective $\Q$-divisor $B$, we define the adjoint variant $B^0_{S}(X, S+B; \OO_X(L))$ to be equal to
\[
\bigcap_{\substack{f \colon Y \to X\\ \mathrm{finite}}}{\rm Im}\,\Big(H^0(Y, \bigoplus_{i=1}^t\OO_Y( K_Y +S_{i,Y} + \lceil{f^* (L - K_X-S-B)}\rceil)) \to H^0(X, \OO_X(L))\Big), 
\]
where $S_{i,Y}$ are compatibly chosen prime divisors lying over $S_{i}$ for $S = \sum_{i=1}^t S_i$ and $S_i$ prime. The choice of such divisors $S_{i,Y}$ is equivalent to choosing prime divisors $S_i^+$ in $X^+$ lying over $S_i$. Note that the definition of $B^0_S$ is independent of the choice of these $S_i^+$ (see \cite[Lemma 4.23]{BMPSTWW20}).

% In what follows we drop the assumption that $R$ is complete and only assume that $(R,\mathfrak{m})$ is a local Noetherian domain with characteristic $p>0$ residue field.

\begin{defn}\label{defn:global-plus-regularity} \label{defn:pure-global-plsu-regularity}
Let $X$ be a normal, integral, excellent scheme with a dualising complex and with every closed point having positive characteristic residue field. Further, let $B \geq 0$ be a $\Q$-divisor on $X$.

We say that $(X,B)$ is \emph{globally $+$-regular}, if for every finite dominant map $f \colon Y \to X$ with $Y$ normal, the morphism 
\[
\OO_X \to f_* \OO_Y(\lfloor f^*B \rfloor)
\]
splits. When $X$ is not integral, but still normal with all connected components of the same dimension, we say that $X$ is \emph{globally $+$-regular} if so are all of its connected components.

 {Let $(X,S+B)$ be a log pair such that $S$ is a reduced divisor having no common components with an effective $\Q$-divisor $B$.} We say that $(X,S+B)$ such that $\lfloor B \rfloor =0$, is \emph{purely globally $+$-regular} (along $S$), if for every finite dominant map $f \colon Y \to X$ with $Y$ normal, the morphism 
\[
\OO_X \to f_* \bigoplus_{i=1}^t\OO_Y(\lfloor f^*(S+B)\rfloor - S_{i,Y})
\]
splits where $S_{i,Y}$ are chosen as in the definition of $B^0_S$ above.
\end{defn} 

In what follows we recall the relative analogs of the above notions. 
\begin{defn}
Let $Z$ be an excellent scheme with  a dualising complex. Let $f \colon X \to Z$ be a proper morphism.

We say that $(X,B)$ for a $\Q$-divisor $B\geq 0$ is \emph{completely relatively $+$-regular} over $Z$ if for every closed point $z \in Z$ with positive residue characteristic its base change $(X_{\widehat{Z}_z}, B_{\widehat{Z}_z})$ to the completion of $\OO_{Z,z}$ at $z$  is globally $+$-regular.

We say that $(X,S+B)$ for a reduced divisor $S$ having no common components with a $\Q$-divisor $B\geq 0$ is \emph{completely relatively purely $+$-regular} over $Z$ if for every closed point $z \in Z$ with positive residue characteristic its base change $(X_{\widehat{Z}_z}, S_{\widehat{Z}_z} +  B_{\widehat{Z}_z})$ to the completion of $\OO_{Z,z}$ at $z$  is purely globally $+$-regular.
\end{defn}
\noindent Note that in contrast to \cite{BMPSTWW20}, we do not assume that $Z$ has all closed points of positive residue characteristics. In particular, our definition is \emph{meaningless} in a neighbourhood of \emph{closed} points of characteristic zero. However, such a formulation of the definition allows us to simplify some of the statements later on.
%\todo{Jakub: some of the statements didn't make sense before, when it was written that $X$ is globally $+$-regular over $Z$, but without assuming that $Z$ has all closed points of positive characteristic.}

The scheme $X$ proper over $Z=\Spec R$ having all closed points of positive residue characteristics is globally $+$-regular if and only if it is completely relatively $+$-regular over $R$ (\cite[Corollary 6.9]{BMPSTWW20}). However, this is not known for pure global $+$-regularity (unless $H^0(X,\OO_X)=R$ and $-(K_X+S+B)$ is big and semiample, see \cite[Corollary 7.6]{BMPSTWW20}), and so for coherence of notation we shall always add the prefix \emph{completely} when talking about any of these notions in the relative setting.

We also point out to the reader that, with notation as above, if $X$ is proper over a complete Noetherian local domain $R$  of positive residue characteristic, then global $+$-regularity and pure global $+$-regularity of $(X,B)$ and $(X,S+B)$, respectively, are equivalent to $B^0(X,B; \OO_X) = H^0(X,\OO_X)$ and $B^0_{S}(X,S+B; \OO_X) = H^0(X,\OO_X)$  by \cite[Proposition 6.8 and Proposition 6.26]{BMPSTWW20}. %We refer to \cite[Section 4 and 6]{BMPSTWW20} for other basic properties of $B^0$ and $+$-regularity.  

% All our results will be independent of the choice of $S^+$ and so we will usually drop it from the notation. The above definitions are equivalent to $B^0(X,B; \OO_X) = H^0(X,\OO_X)$ and $B^0_{S}(X,S+B; \OO_X) = H^0(X,\OO_X)$, respectively, by \cite[Proposition 6.7 and Proposition 6.18]{BMPSTWW20}. We refer to \cite[Section 4 and 6]{BMPSTWW20} for other basic properties of $B^0$ and $+$-regularity. %Further, the reader should consult \cite[Section 4.3]{BMPSTWW20} for the definition of $B^0_S$ and pure $+$-regularity when $S$ is not necessarily irreducible.

% irreducible under completion. \\     

\begin{lemma}\label{l-small} Let $X$ and $X'$ be two normal varieties defined over an $F$-finite field of characteristic $p>0$ and let $\phi \colon X\dasharrow X'$ be a birational map which is an isomorphism at any codimension one point. Let $B\geq 0$ be a $\Q$-divisor on $X$ and $B'=\phi _* B$ the corresponding $\Q$-divisor on $X'$. Then $(X,B)$ is globally F-split (resp.\ globally F-regular or purely globally F-regular) if and only if $(X',B')$ is globally F-split (resp.\ globally F-regular or purely globally F-regular).

The same result holds for global $+$-regularity and global pure $+$-regularity when $X$ and $X'$ are as in Definition \ref{defn:global-plus-regularity}.  
\end{lemma}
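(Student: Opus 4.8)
The plan is to transfer the splitting of the defining maps between $X$ and $X'$ by restricting everything to a common open set on which $\phi$ is an isomorphism and whose complement has codimension $\geq 2$, exploiting that on a normal variety the sheaf $\OO_X$ — and hence any $\Hom$-sheaf into it — is unaffected by such a restriction.

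Concretely, first I would fix open subsets $U\subseteq X$ and $U'\subseteq X'$, each with complement of codimension $\geq 2$, such that $\phi$ restricts to an isomorphism $U\xrightarrow{\ \sim\ }U'$. Because $\phi$ is an isomorphism at every codimension-one point and $B'=\phi_*B$, this isomorphism identifies $B|_U$ with $B'|_{U'}$, hence $\lfloor(p^e-1)B\rfloor|_U$ with $\lfloor(p^e-1)B'\rfloor|_{U'}$ for every $e$; moreover $D\mapsto \phi_*D$ is a bijection between effective divisors on $X$ and on $X'$ under which $D|_U$ corresponds to $(\phi_*D)|_{U'}$, and this bijection preserves the property of intersecting $\lfloor B\rfloor$ (resp.\ $\lfloor B'\rfloor$) properly, since that property is detected on $U\cong U'$.

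Next, fix $e>0$ and an effective divisor $D$ on $X$, put $D'=\phi_*D$, and set $M=F^e_*\OO_X(\lfloor(p^e-1)B\rfloor+D)$ with $\Phi\colon\OO_X\to M$ the canonical map, and similarly $M'$ and $\Phi'$ on $X'$. Since the absolute Frobenius is the identity on the underlying topological space, $(F^e)^{-1}(U)=U$ and therefore $M|_U=F^e_*\OO_U(\lfloor(p^e-1)B|_U\rfloor+D|_U)$, with $\Phi|_U$ the canonical map; thus under $U\cong U'$ the pair $(M|_U,\Phi|_U)$ is carried to $(M'|_{U'},\Phi'|_{U'})$. Now comes the one substantive point: since $X$ is normal and $X\setminus U$ has codimension $\geq 2$, we have $\OO_X=j_*\OO_U$ for $j\colon U\hookrightarrow X$, whence by adjunction
\[
\Hom_{\OO_X}(M,\OO_X)=\Hom_{\OO_X}(M,j_*\OO_U)=\Hom_{\OO_U}(M|_U,\OO_U),
\]
and likewise $\Hom_{\OO_X}(\OO_X,\OO_X)=H^0(X,\OO_X)=H^0(U,\OO_U)$, compatibly with composition of maps. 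Consequently a retraction $\psi$ with $\psi\circ\Phi=\operatorname{id}_{\OO_X}$ exists if and only if a retraction of $\Phi|_U$ exists, i.e.\ if and only if $\OO_U\to M|_U$ splits; the identical statement holds for $\Phi'$ and $\OO_{U'}\to M'|_{U'}$. Since the two $U$-side situations are identified by $\phi|_U$, we conclude that $\Phi$ splits if and only if $\Phi'$ splits. Taking $D=0$ and all $e>0$ gives the equivalence for global F-splitting; letting $D$ range over all effective divisors on $X$ (matched bijectively with those on $X'$) and $e\gg 0$ gives it for global F-regularity; and restricting $D$ to those intersecting $\lfloor B\rfloor$ properly gives it for pure global F-regularity.

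The heart of the argument — and the only place the hypotheses are really used — is the identification $\Hom_{\OO_X}(M,\OO_X)\cong\Hom_{\OO_U}(M|_U,\OO_U)$ together with its compatibility with evaluation at $\Phi$; this rests on normality and on $\phi$ being an isomorphism in codimension one, which is precisely why that hypothesis cannot be dropped. The remaining steps are routine bookkeeping: that $F^e_*$ commutes with restriction to $U$ (because $F^e$ is the identity on the space), that the formation of $\OO_X(\lfloor(p^e-1)B\rfloor+D)$ commutes with restriction to $U$ and with $\phi|_U$, and that effective divisors and the ``properly intersecting'' condition transport correctly under $\phi_*$.
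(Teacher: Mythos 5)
Your proposal is correct, and it is essentially the argument the paper points to: the paper does not prove Lemma~\ref{l-small} itself but cites the proof of Proposition~6.3 of \cite{schwedesmith10}, whose content is exactly your observation that a splitting of $\OO_X\to F^e_*\OO_X(\lfloor(p^e-1)B\rfloor+D)$ is equivalent to a splitting over any open $U$ with $\operatorname{codim}(X\setminus U)\geq 2$, because $\Hom_{\OO_X}(-,\OO_X)$ and $H^0(\OO_X)$ are insensitive to removing such a locus when $X$ is normal (and here normality is implicit in the pair/divisor set-up, as you correctly note). Your bookkeeping for the three variants — $D=0$ for F-splitting, all effective $D$ for F-regularity, and the proper-intersection constraint for pure F-regularity, together with the bijection $D\leftrightarrow\phi_*D$ — is exactly what is needed, and the point that $(F^e)^{-1}(U)=U$ for the absolute Frobenius takes care of the commutation with restriction. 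No gaps.
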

\begin{proof}
This follows from the fact that a splitting of structure sheaves for a finite map of normal varieties may checked on an open subset with complement of codimension at least two. In positive characteristic, this lemma has been observed in \cite[Proof of Proposition 6.3]{schwedesmith10}. In mixed characteristic,  the proof is analogous to \cite[Proposition 6.18]{BMPSTWW20}.
\end{proof}

\begin{proposition}\label{p-das} Suppose that $(X,S+B)$ is a purely $F$-regular pair over an $F$-finite field of characteristic $p>0$ (or a purely $+$-regular pair as in Definition \ref{defn:pure-global-plsu-regularity}) where $\lfloor S+B\rfloor =S$ is a prime divisor. Then $S$ is normal.
 \end{proposition}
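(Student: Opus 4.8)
The plan is to identify $S$, as a subscheme of $X$, with a minimal center of sharp $F$-purity of $(X,S+B)$, and then to appeal to the positive characteristic counterpart of ``minimal log canonical centers are normal'': minimal centers of sharp $F$-purity of a sharply $F$-pure pair are normal (see \cite{Das15}; cf.\ also \cite{schwedesmith10}, \cite{HW17}). Since the assertion is local, I would first reduce to the case $X=\Spec R$ with $R$ an $F$-finite local domain and $S=V(\mathfrak q)$, and note that $(X,S+B)$ is in particular sharply $F$-pure.

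Two points then have to be checked. \emph{First}, that $\mathfrak q$ is compatible with every splitting witnessing $F$-purity. Fix $e>0$ and $\phi\colon F^e_*\OO_X(\lfloor (p^e-1)(S+B)\rfloor)\to\OO_X$ with $\phi(F^e_*1)=1$. Localising at the codimension one point $\mathfrak q$ makes $\OO_{X,\mathfrak q}$ a DVR with uniformiser $t$, identifies $\mathfrak q$ with $(t)$ and $\OO_X(\lfloor (p^e-1)(S+B)\rfloor)$ with $t^{-(p^e-1)}\OO_{X,\mathfrak q}$; writing $F^e_*\bigl(t^{-(p^e-1)}\OO_{X,\mathfrak q}\bigr)$ in a free basis adapted to the powers of $t$ and using $\phi(F^e_*1)=1$, one checks directly that $\phi\bigl(F^e_*(t\,\OO_{X,\mathfrak q})\bigr)\subseteq t\,\OO_{X,\mathfrak q}$. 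Hence the ideal $\phi(F^e_*\mathfrak q)\subseteq\OO_X$ localises to a proper ideal of $\OO_{X,\mathfrak q}$, so $\phi(F^e_*\mathfrak q)\subseteq\mathfrak q$; thus $\mathfrak q$ is $\phi$-compatible, $\phi$ descends to a splitting $\bar\phi$ on $S$, and $S$ is a center of sharp $F$-purity of $(X,S+B)$. \emph{Second}, that $S$ is the only such center. Restricting the splittings that define pure $F$-regularity to $X\setminus S$ shows that $(X\setminus S,\,B|_{X\setminus S})$ is strongly $F$-regular, hence has no center of sharp $F$-purity, so every center of $(X,S+B)$ is contained in $S$. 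If a prime $\mathfrak p\supsetneq\mathfrak q$ cut out a center, choose an effective divisor $D$ through $V(\mathfrak p)$ meeting $S$ properly (for instance the divisor of zeros of some $f\in\mathfrak p\setminus\mathfrak q$) and $0<\epsilon\ll1$; for $e\gg0$ with $(p^e-1)\epsilon\le1$, a splitting of $\OO_X\to F^e_*\OO_X\bigl(\lfloor (p^e-1)(S+B)\rfloor+D\bigr)$ --- which exists by pure $F$-regularity --- factors through $\OO_X\to F^e_*\OO_X\bigl(\lfloor (p^e-1)(S+B+\epsilon D)\rfloor\bigr)$, so $(X,S+B+\epsilon D)$ is again sharply $F$-pure. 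But $V(\mathfrak p)$ is a log canonical center of $(X,S+B)$ (being a center of sharp $F$-purity), so adding the extra component $\epsilon D$ through it destroys log canonicity --- a contradiction. Therefore $\mathfrak q$ is the unique, hence minimal, center of sharp $F$-purity, and $S$ is normal. Unwinding the black box: the absence of a proper compatible prime over $\mathfrak q$ means $\bar\phi$ exhibits $\OO_{S,P}$ as strongly $F$-regular --- hence normal --- at every non-generic $P\in S$, while $\OO_S$ is a field at the generic point of $S$.

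I expect the second point to be the main obstacle, for it is exactly here that \emph{pure} $F$-regularity is used rather than mere $F$-purity, and it must be: an $F$-pure (even $F$-split) pair can have non-normal $\lfloor\Delta\rfloor$ --- the obstruction being a boundary component running through a lower-dimensional center --- and pure $F$-regularity supplies precisely the transverse room that rules this out. Two smaller matters also need attention: the computation in the first step takes place over a possibly imperfect $F$-finite residue field, so the relevant free basis of $F^e_*\OO_{X,\mathfrak q}$ involves a $p^e$-basis of the residue field in addition to the powers of $t$; and, as is customary in this area, although some of the $F$-adjunction references are written over an algebraically closed ground field, their proofs go through over any $F$-finite field.
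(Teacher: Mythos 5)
The paper's ``proof'' of this proposition is just a citation to \cite{MSTWW20} and \cite{Das15}, and your sketch reconstructs the argument those sources use, so in spirit it is the same approach: show that $\mathfrak q$ (the ideal of $S$) is the unique nontrivial uniformly $(S+B,F)$-compatible prime, then use $F$-adjunction to convert pure $F$-regularity of $(X,S+B)$ into strong $F$-regularity of $S$, which gives normality. Your first step --- compatibility of $\mathfrak q$ coming from the coefficient of $S$ being exactly $1$, checked at the codimension-one point and then propagated using primeness of $\mathfrak q$ --- is correct.

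Two comments on execution. For uniqueness, the detour through log canonical centers (Schwede's theorem that $F$-pure centers are lc centers, plus Hara--Watanabe) is heavier than needed, and it is worth knowing the direct one-line argument: if $\mathfrak p\supsetneq\mathfrak q$ were uniformly compatible, pick $c\in\mathfrak p\setminus\mathfrak q$; multiplying the splitting $\eta$ of $\OO_X\to F^e_*\OO_X(\lfloor(p^e-1)(S+B)\rfloor+\mathrm{div}(c))$ by $c^{-1}$ yields an $\OO_X$-linear $\phi\colon F^e_*\OO_X(\lfloor(p^e-1)(S+B)\rfloor)\to\OO_X$, $\phi(F^e_*g)=\eta(F^e_*(g/c))$, with $\phi(F^e_*c)=\eta(F^e_*1)=1$; uniform compatibility of $\mathfrak p$ with $\phi$ then forces $1\in\mathfrak p$, a contradiction. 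This is the relative version of ``a strongly $F$-regular ring has no proper nonzero compatible ideals'' and avoids invoking any discrepancy computation. Second, the closing ``unwinding'' is too quick as stated: absence of compatible primes for a \emph{single} $\bar\phi$ does not give strong $F$-regularity of $S$. What the argument actually uses is that, for every $c\notin\mathfrak q$ and $e\gg0$, the map $\phi$ above with $\phi(F^e_*c)=1$ is $\mathfrak q$-compatible by your first step, hence descends by $F$-adjunction to $\bar\phi$ on $S$ with $\bar\phi(F^e_*\bar c)=1$; the resulting \emph{family} of maps exhibits $S$ (with its $F$-different boundary) as strongly $F$-regular, hence normal. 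You gesture at this with the references to \cite{Das15} and \cite{HW17}, so I read this as a presentational shortcut rather than a genuine gap, but the dependence on the $F$-adjunction correspondence should be made explicit.
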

\begin{proof} In positive characteristic see \cite[Theorem A]{MSTWW20} or the proof of \cite[Theorem A]{Das15}. In mixed characteristic, this is \cite[Corollary 7.9]{BMPSTWW20}. 
 \end{proof}

Further, recall the following result known as inversion of F-adjunction ($+$-adjunction, resp.).
\begin{lemma}\label{l-das}
Let $(X,S+B)$ be a plt pair, where $S=\lfloor S+B\rfloor $ is a prime divisor and $B\geq 0$ has no common components with $S$, and let $f \colon X\to Z$ be a projective birational morphism of normal varieties over an $F$-finite field of characteristic $p>0$ (resp.\ over a DVR of characteristic $(0,p)$ for $p>0$). Assume that $-(K_X+S+B)$ is $f$-ample and write $K_{S^\nu}+B_{S^\nu}=(K_X+S+B)|_{S^\nu}$ for the normalisation $S^\nu\to S$. 

Then $(X,S+B)$ is relatively purely F-regular (resp.\ completely relatively purely $+$-regular) over a neighbourhood of $f(S)\subset Z$ if and only if $(S^\nu,B_{S^\nu})$ is relatively F-regular (resp.\ completely relatively $+$-regular) over $f(S)$. Under these equivalent assumptions, $S$ is normal. \end{lemma}
\begin{proof} 
In positive characteristic this follows by the same proof as \cite[Lemma 2.10]{HW17}. It uses the equality between the different and the $F$-different. For this, note that \cite[Theorem B]{Das15} assumes that the base field is algebraically closed, but the proof goes through for every $F$-finite base field. 

In mixed characteristic, this is \cite[Corollary 7.5]{BMPSTWW20} (after completing or invoking  \cite[Corollary 7.6]{BMPSTWW20}).

Finally, the normality of $S$ around positive characteristic closed points follows from Proposition \ref{p-das}, and around characteristic zero closed points by standard results (\cite{KM98}).   %we can complete at every point $z \in f(S)$ and apply \cite[Corollary 7.5]{BMPSTWW20}. Note that $S$ may stop to be irreducible but this reference covers reducible $S$ as well. Last, normality may be checked after completion so $S$ is normal by \cite[Corollary 7.5]{BMPSTWW20}.   
\end{proof}

We also have the following immediate consequence of \cite[5.3]{Schwede14}.
\begin{proposition}\label{p-ext} Let $f\colon X\to U$ be a projective morphism of normal varieties over an $F$-finite field of positive characteristic where $U$ is affine, $L$ a $\Q$-Cartier Weil divisor, and $(X,S+B)$ a log pair such that $\lfloor S+B\rfloor =S$ is normal integral, $p$ does not divide the index of $B$, and $L-(K_X+S+B)$ is $f$-ample. Suppose that $L$ is Cartier on an open neighbourhood of $S\,\backslash\, Z$, where $Z \subseteq S$ is a closed subset of codimension at least two. Further, assume that $X$ is $\mathbb{Q}$-factorial, strongly $F$-regular, and of dimension at least three.
Then $S^0(X,S+B;L)\to S^0(S,B_S; L|_S)$ is surjective where $(K_X+S+B)|_S=K_S+B_S$.
\end{proposition}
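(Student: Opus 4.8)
The plan is to deduce this from Schwede's restriction theorem \cite[5.3]{Schwede14}, which establishes exactly this surjectivity under the stronger hypothesis that $L$ is Cartier in a full neighbourhood of $S$. In the proof of that result, the Cartier hypothesis on $L$ is used only so that the restriction sequences $0\to \OO_X(N-S)\to \OO_X(N)\to \OO_S(N|_S)\to 0$ appearing there (for the Weil divisors $N$ whose restrictions $N|_S$ produce the reflexive sheaves occurring in $S^0(S,B_S;L|_S)$, via the adjunction identity $(K_X+S+B)|_S=K_S+B_S$) are exact with locally free last term; and the $R^1f_*$-vanishing needed for $e\gg 0$ is relative Serre/Fujita vanishing for Frobenius pushforwards, which uses only that $L-(K_X+S+B)$ is $f$-ample. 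Hence the entire content here is to relax ``$L$ Cartier near $S$'' to ``$L$ Cartier near $S\setminus Z$'', with $Z\subseteq S$ closed of codimension at least two, and for this one uses the normality of $S$.

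Concretely: since $S$ is normal and $\operatorname{codim}_S Z\geq 2$, every reflexive $\OO_S$-module $\mathcal{F}$ satisfies $H^0(S,\mathcal{F})=H^0(S\setminus Z,\mathcal{F})$. Applying this to $\OO_S(L|_S)$ and to the reflexive sheaves $\OO_S((1-p^e)(K_S+B_S)+p^eL|_S)$, and using that the trace maps carving out $S^0(S,B_S;L|_S)$ commute with restriction to the open set $S\setminus Z$, one gets
\[
S^0(S,B_S;L|_S)=S^0\big(S\setminus Z,\ B_S|_{S\setminus Z};\ L|_{S\setminus Z}\big),
\]
and the right-hand side now involves only honest invertible sheaves because $L$ is Cartier near $S\setminus Z$. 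On the open set $W\subseteq X$ on which $L$ is Cartier (so $S\setminus Z\subseteq W$), the natural maps $\OO_X(N)|_S\to \OO_S(N|_S)$ are isomorphisms over $W$, so the restriction sequences above are exact with locally free last term over $W$; Schwede's argument then runs over $W$ and, combined with the displayed identity, shows that the image of $S^0(X,B;L)$ in $H^0(S,\OO_S(L|_S))=H^0(S\setminus Z,\OO_S(L|_S))$ contains all of $S^0(S,B_S;L|_S)$.

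It remains to package the conclusion globally over $X$. Choosing $e\gg 0$ sufficiently divisible, $S^0(X,B;L)$ equals the single image ${\rm Im}\big(H^0(X,F^e_*\OO_X((1-p^e)(K_X+B)+p^eL))\to H^0(X,\OO_X(L))\big)$ — legitimate by the coherence remark preceding the proposition, as $L-(K_X+S+B)$ is $f$-ample — and likewise for $S^0(S,B_S;L|_S)$; so the claim follows by chasing the commutative square relating the two single images, the essential input being that $H^0(X,F^e_*\OO_X(N))$ surjects onto the trace-stable submodule of $H^0(S,F^e_*\OO_S(N|_S))$, which is exactly what the $R^1f_*$-vanishing above yields in the proof of \cite[5.3]{Schwede14}.

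The step I expect to require genuine care, rather than routine bookkeeping, is verifying that the trace/adjunction homomorphisms really are compatible with restriction to $S\setminus Z$ and extend reflexively across $Z$; but since every sheaf in play is reflexive on the normal varieties $X$ and $S$ and all the constructions are canonical, this should go through, and in particular neither an extra vanishing theorem nor a resolution of singularities is needed.
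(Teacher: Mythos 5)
Your proposal takes essentially the same route as the paper's own (very terse) proof, which consists of exhibiting the commutative square---its existence resting on the equality of the different and the $F$-different---and citing \cite[5.3]{Schwede14}; the Hartogs/reflexivity reduction on the normal $S$ that you supply to bridge the codimension-$\geq 2$ gap between ``Cartier near $S$'' and ``Cartier near $S\setminus Z$'' is precisely the observation the paper leaves implicit. One imprecision worth noting: ``Schwede's argument then runs over $W$'' cannot be taken literally, since $W$ is merely an open subset and not projective over $U$, so the required $R^1f_*$-vanishing must be applied on $X$ itself---which your final paragraph does, correctly, via the coherence remark on $S^0$ preceding the proposition.
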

\noindent Since $Z \subseteq S$ is a closed subset of codimension at least two, $L|_S$ is well defined as a Weil divisor. If $L$ is Cartier, then the assumption that $X$ is  $\mathbb{Q}$-factorial, strongly $F$-regular, and of dimension at least three is not needed (cf. \cite[5.3]{Schwede14}).
\begin{proof} This follows easily from the following commutative diagram, which exists as the different is equal to the $F$-different
\begin{center}
\begin{tikzcd}
F^e_* \OO_X((1-p^e)(K_X+S+B) + p^eL) \arrow{d} \arrow{r} &  F^e_* \OO_S((1-p^e)(K_S+B_S) + p^eL|_S) \arrow{d}\\
\OO_X(L) \arrow{r} & \OO_S(L|_S).
\end{tikzcd}
\end{center}
Here, we use that $(X,S+B)$ is $\mathbb{Q}$-factorial, strongly $F$-regular, and of dimension at least three to guarantee that divisorial sheaves on $X$ are $S_3$ (cf.\ \cite[Proposition 2.7]{HW17}), which implies that the upper horizontal arrow is a surjection of sheaves (cf.\ \cite[Lemma 2.4]{HW17}). This is turn gives the surjection on global sections for $e \gg 0$ by Serre's vanishing.
\end{proof}
In the case of $B^0$, the assumption on $+$-regularity of singularities when $L$ is not Cartier is not needed. This discrepancy stems from $X^+$ being cohomologically Cohen-Macaulay in positive and mixed characteristic (informally speaking, the `non-Cohen-Macauliness' is killed by finite covers).
\begin{proposition} \label{p-ext-mixed}
Let $X$ be a projective normal integral scheme over a complete local Noetherian domain $(R, \mathfrak{m})$ of characteristic $(0,p)$ for $p> 0$. Let $L$ be a $\Q$-Cartier Weil divisor, and let $(X,S+B)$ a log pair such that $\lfloor S+B\rfloor =S$ is normal integral and $L-(K_X+S+B)$ is $f$-ample. Suppose that $L$ is Cartier on an open neighbourhood of $S\,\backslash\, Z$, where $Z \subseteq S$ is a closed subset of codimension at least two. Then $B^0_S(X,S+B;L)\to B^0(S,B_S; L|_S)$ is surjective where $(K_X+S+B)|_S=K_S+B_S$.
% Similarly, with the same assumptions as above but with $X$ and $U$ being varieties over a DVR of characteristic $(0,p)$ for $p>0$ we have that
% \[
% B^0_{\widehat{S}}(\widehat{X},\widehat{S}+\widehat{B};\widehat{L})\to B^0(\widehat{S},B_{\widehat S}; \widehat{L}|_{\widehat{S}})
% \]
% is surjective for every point $u \in U$, where  $\widehat{\bullet}$ above denotes appropriate base changes to the completion of $U$ at $u$. In the mixed characteristic case we do not need to assume that $p$ does not divide the index of $B$.
\end{proposition}
\noindent The assumption that $S$ is connected is unnecessary but we kept it for simplicity as in our applications the connectedness will be preserved under completion. In general, however, this is not always the case.
\begin{proof}
The proof is the same as in \cite[Theorem 7.2]{BMPSTWW20}. % as the requirement that $L|_S$ is Cartier on $S \, \backslash \, Z$ ensures that top local cohomologies are unaffected. 
We recall it for the convenience of the reader. We refer to \cite[Subsection 2.1]{BMPSTWW20} for a primer on Matlis duality.

Set $\mathcal{N} = \OO_{X^+}(N)$ for $N = \pi^*(K_X+S+B-L)$, where $\pi \colon X^+ \to X$ is the natural map. Note that $\mathcal{N}$ is a line bundle, as every $\Q$-Cartier $\Q$-divisor is rendered Cartier by some finite cover. Let $S^+$ be the chosen prime divisor on $X^+$, lift $S$ and consider the following diagram wherein the left square exists as $B$ is effective:
{\small
\begin{center}
\begin{tikzcd}[column sep = tiny]
0 \arrow{r} & \OO_X(K_X - L) \arrow{d} \arrow{r} &  \OO_X(K_X+S-L) \arrow{r}\arrow{d} & \OO_X(K_X+S-L)\, /\, \OO_X(K_X-L) \arrow[dashed]{d} \arrow{r} & 0 \\
0 \arrow{r} & \pi_*\OO_{X^+}(N-S^+) \arrow{r} &  \pi_*\OO_{X^+}(N) \arrow{r}  & \pi_*(\OO_{S^+} \otimes \mathcal{N}) \arrow{r} & 0.
\end{tikzcd}
\end{center}
}
\noindent Here, the right vertical dashed arrow exists by an easy diagram chase. Moreover, this arrow factorises through the $S_2$-fification $\OO_{S}(K_{S}-L|_S)$ of the upper term as $S^+$ is normal, and so $\OO_{S^+} \otimes \mathcal{N}$ is $S_2$. Now, apply local cohomology with $d = \dim X$ to get:
{\small
\begin{center}
\begin{tikzcd}[column sep = tiny]
 H^{d-1} R\Gamma_{\mathfrak{m}} R\Gamma(S, \OO_X(K_X+S-L)\, /\, \OO_X(K_X-L)) \arrow[two heads]{d} \arrow{r} & H^d R\Gamma_{\mathfrak{m}} R\Gamma(X, \OO_X(K_X-L)) \arrow{dd}{(**)} \\
  H^{d-1} R\Gamma_{\mathfrak{m}} R\Gamma(S, \OO_{S}(K_{S}-L|_S)) \arrow{d}{(*)} &  \\
 H^{d-1} R\Gamma_{\mathfrak{m}} R\Gamma(S^+, \OO_{S^+} \otimes \mathcal{N}) \arrow[hook]{r} & H^d R\Gamma_{\mathfrak{m}} R\Gamma(X^+, \OO_{X^+}(N-S^+)).
\end{tikzcd}
\end{center}
}
\noindent Here, the upper left vertical arrow is surjective as the cokernel of 
\[
\OO_X(K_X+S-L)\, /\, \OO_X(K_X-L) \to \OO_{S}(K_{S}-L|_S)
\]
has codimension at least $1$ in $S$, and so the top local cohomology ($H^{d-1} R\Gamma_{\mathfrak{m}} R\Gamma$) thereof vanishes. The bottom horizontal arrow is injective, as 
\[
H^{d-1} R\Gamma_{\mathfrak{m}} R\Gamma(X^+, \OO_{X^+}(N))  = 0
\]
by \cite[Corollary 3.7]{BMPSTWW20}.

The upshot is that the image of $(*)$ injects into the image of $(**)$. Note that
\[
\OO_{S^+} \otimes \mathcal{N} \simeq \OO_{S^+}((\pi|_{S^+})^*(K_S+B_S - L|_S))
\]
by definition of the different (cf.\ the second part of \cite[Subsection 2.1]{MSTWW20}), and so by taking Matlis duality we get that:
\[
B^0_S(X,S+B;L)\to B^0(S,B_S; L|_S)
\]
is surjective. Here the left hand side is Matlis dual to the image of $(**)$ by \cite[Definition 4.21 and Lemma 4.24]{BMPSTWW20}, while the right hand side is Matlis dual to the image of $(*)$ by \cite[Lemma 4.8]{BMPSTWW20}. \qedhere
% Lt $f \colon Y \to X$ be a finite cover by a normal scheme $Y$ such that $f^*L$ is Cartier. Then
% \[
% B^0(X,S+B;L) = \Tr(B^0(Y,S_Y+B_Y; f^*L)),
% \]
% where $\Tr \colon H^0(Y,f^*L) \to H^0(X, L)$ is induced by the trace map 

% \todo{Jakub: be careful about normality/integrality after completion}
% In mixed characteristic, the proof is exactly the same as that of \cite[Theorem 7.2]{BMPSTWW20} (despite it requiring that $L$ is Cartier). We can also easily reduce our setup to that of this theorem by replacing $\widehat{X}$ by a finite cover for which the pullback of $L$ is Cartier, and applying an analogue of \cite[Lemma 4.18]{BMPSTWW20}. \qedhere   
\end{proof}

\subsection{Special termination}\label{ss-ter}

In this section $X$ is a normal variety defined over a field $k$ of characteristic $p>0$ or a DVR $R$ of characteristic $(0,p)$ for $p>0$. Recall the following result known as special termination \cite[Theorem 4.2.1]{fujino05}.
\begin{theorem}\label{t-specialter} Assume that the log minimal model program for $\Q$-factorial dlt pairs holds in dimension $\leq n-1$ (including the termination of all flips). Let $X$ be a normal $\Q$-factorial $n$-dimensional variety, let $B$ be an effective $\mathbb R$-divisor such that $(X,B)$ is dlt, and let \[(X,B)\dasharrow (X_1,B_1) \dasharrow (X_2,B_2) \dasharrow \ldots\]
be a sequence of $(K_X+B)$-flips. Then after finitely many steps, the flipping loci are disjoint from $\lfloor B_i \rfloor$.
\end{theorem}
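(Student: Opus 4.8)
The plan is to reduce special termination in dimension $n$ to the termination of the log minimal model program in dimension $n-1$ by restricting the sequence, via adjunction, to the components of $\lfloor B_i\rfloor$; this is the strategy of Shokurov and Fujino. One argues by induction on $n$, and for fixed $n$ by a secondary induction on the dimension of the strata of $\lfloor B_i\rfloor$. Write $f_i\colon X_i\to W_i$ for the $i$-th flipping contraction (so $\rho(X_i/W_i)=1$), $Z_i\subset X_i$ for its exceptional (flipping) locus, and $Z_i^{+}\subset X_{i+1}$ for the flipped locus.

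First I would record the two mechanisms driving the argument. Since each $\phi_i\colon X_i\dasharrow X_{i+1}$ is a flip, it is an isomorphism in codimension one, so the components of $\lfloor B_i\rfloor$ — and more generally all strata of $\lfloor B_i\rfloor$ — have well-defined birational transforms, finite and fixed in number. Secondly, by the negativity lemma on a common resolution of $X_i$ and $X_{i+1}$, log discrepancies are monotone: $a_E(X_i,B_i)\le a_E(X_{i+1},B_{i+1})$ for every divisorial valuation $E$ over $X$, with strict inequality exactly when $\mathrm{center}_{X_i}(E)\subseteq Z_i$ (equivalently $\mathrm{center}_{X_{i+1}}(E)\subseteq Z_i^{+}$). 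It suffices to show $Z_i\cap\lfloor B_i\rfloor=\emptyset$ for $i\gg0$, and since $\lfloor B_i\rfloor$ has finitely many components we may fix one of them, $S=S_i\subset X_i$, and prove that $Z_i\cap S_i=\emptyset$ eventually.

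Let $\nu_i\colon S_i^\nu\to S_i$ be the normalisation and set $K_{S_i^\nu}+B_{S_i^\nu}=(K_{X_i}+B_i)|_{S_i^\nu}$. By Lemma~\ref{l-dltad}, $(S_i^\nu,B_{S_i^\nu})$ is a dlt pair of dimension $n-1$ whose strata correspond bijectively to the strata of $\lfloor B_i\rfloor$ contained in $S_i$, and $\phi_i$ induces a birational map $\psi_i\colon S_i^\nu\dasharrow S_{i+1}^\nu$ that is an isomorphism away from the preimages of $Z_i$ and $Z_i^{+}$. The crucial technical step is to show that $\psi_i$ does not decrease $(K_{S_i^\nu}+B_{S_i^\nu})$-log discrepancies and that, whenever it is not an isomorphism, it is a finite composition of steps of a $(K_{S_i^\nu}+B_{S_i^\nu})$-minimal model program; this should come from comparing the adjunction formula on the two sides of the flip (so that $B_{S_i^\nu}$ is computed from the different compatibly, Lemma~\ref{l-dltad}) together with the negativity lemma. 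Granting this, the composite $S_0^\nu\dasharrow S_1^\nu\dasharrow\cdots$ is a log MMP for a dlt pair of dimension $n-1$, hence terminates under our hypothesis; thus for $i\gg0$ the map $\psi_i$ is an isomorphism and, by the inductive hypothesis — special termination in dimension $n-1$, itself a consequence of the log MMP in dimensions $\le n-2$ — it identifies neighbourhoods of $\lfloor B_{S_i^\nu}\rfloor$. It then remains to see that $Z_i\cap S_i=\emptyset$ once $\psi_i$ is an isomorphism: if a curve $C$ contracted by $f_i$ met $S_i$ with $C\subseteq S_i$, then $\tilde C$ would be a $(K_{S_i^\nu}+B_{S_i^\nu})$-negative curve on a pair already at a minimal model, which is impossible; and if $C\not\subseteq S_i$ met $S_i$, then $S_i\cdot C>0$ forces $S_i$ to be a negative multiple of $K_{X_i}+B_i$ in $N^1(X_i/W_i)\cong\mathbb{R}$, hence $S_{i+1}\cdot C^{+}<0$ and $C^{+}\subseteq S_{i+1}$ — so, via the isomorphism $\psi_i$, the fibre of $f_i$ containing $C$ meets $S_i$ in a positive-dimensional set, which therefore contains a contracted curve lying in $S_i$, returning us to the first, impossible, case. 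Hence $Z_i\cap\lfloor B_i\rfloor=\emptyset$ for all large $i$.

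The main obstacle is the ``crucial technical step'' above — proving that the induced map $\psi_i$ on a normalised boundary component is genuinely assembled from steps of an $(n-1)$-dimensional log MMP, in particular that it extracts no divisor of log discrepancy exceeding $1$ — together with the accompanying bookkeeping of strata that keeps the secondary induction honest: the transform of a stratum disjoint from $Z_i$ must again be a stratum of the same dimension, while a minimal stratum meeting $Z_i$ must either drop in dimension or be detected by $\psi_i$ near $\lfloor B_{S_i^\nu}\rfloor$. For $\mathbb{R}$-coefficients one must also verify that only finitely many log discrepancy values occur at each stage — which holds because the coefficients of $B_{S_i^\nu}$ are determined from the fixed coefficients of $B$ by the different — so that the monotonicity of discrepancies forces stabilisation. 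These are exactly the ingredients assembled in Fujino's proof of \cite[Theorem 4.2.1]{fujino05}.
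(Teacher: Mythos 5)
The paper does not give its own proof of this theorem; it simply cites \cite[Theorem 4.2.1]{fujino05}, so there is no independent argument in the paper to compare against, and your sketch is a faithful outline of the Shokurov--Fujino argument behind that citation. One caveat: the claim that $\psi_i\colon S_i^\nu\dasharrow S_{i+1}^\nu$ is itself a finite composition of MMP steps on $S_i^\nu$ is slightly overstated, since $S_i^\nu$ need not be $\Q$-factorial and $\psi_i$ may a priori extract divisors; Fujino's actual mechanism is a secondary induction on the dimension of lc centres (\cite[Proposition 4.2.14]{fujino05}) showing $\psi_i$ is eventually small, followed by passage to a $\Q$-factorial dlt modification of $S_i^\nu$ on which one runs a crepant MMP over $W_i$, and termination of that MMP forces $\psi_i$ to stabilise. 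You have correctly flagged this as the load-bearing technical step, so the overall plan is sound and matches what the citation assumes.
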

% \todo{Jakub: double-checking, is this argument okay for $\mathbb{R}$-divisors? The part that is not obvious to me, is when we have divisorial centres $S$ and $S_1$ on $X$ and $X_1$, respectively, as well as a flipping contraction $X \to Z$, and we run an MMP on the log resolution of $S$ over $Z$, then why the output of the MMP admits a map to $S_1$. This is fine with $\Q$-boundary, but not so obvious to me with $\mathbb{R}$-boundary. I guess, what I am asking is if a canonical model of an $\mathbb{R}$-Cartier divisor is unique. 
% CH: I am confused (but maybe I misunderstand the question; I will take a look at \cite[Theorem 4.2.1]{fujino05}. At any rate: Suppose that $f:Y\to Z$ and $f':Y'\to Z'$ are two terminal models with morphisms to canonical models, then pick a common resolution $W$ and let $H$, $H'$ be pull-backs of the ample divisors $A=f_*(K_X+B)$ and $A'=f'_*(K_{X'}+B')$. Then by usual arguments $H=H'$. Now $W\to X$ is the unique morphism of normal varieties with connected fibers contracting curves such that $C\cdot H=0$. Since $H=H'$, then $Z=Z'$. }
Since the minimal model program for surfaces is known in full generality, the above result implies special termination for threefolds and in some special cases for fourfolds over perfect $F$-finite fields of characteristic $p>0$. Note that it is not true that in positive characteristic log canonical centres are normal. However, by Lemma \ref{l-dltad}, we know that  if $W^\nu\to W$ is the normalisation of a log canonical centre of a dlt pair $(X,B)$, then the induced pair $K_{W^\nu}+B_{W^\nu}=(K_X+B)|_{W^\nu}$ is also dlt, its log canonical centres are in bijection with the log canonical centres of $(X,B)$ contained in $W$, and the coefficients of $B_{W^\nu}$ are the same as those given by the usual adjunction in characteristic 0.
\begin{theorem}\label{t-st3} The statement of Theorem \ref{t-specialter} holds in dimension three over all {F-finite} fields $k$ of characteristic $p>5$ or DVRs $R$ of characteristic $(0,p)$ for $p>5$. 

% In fact, given a projective log canonical $\Q$-factorial pair $(X,B)$ over $k$ and a sequence of $(K_X+B)$-flips and divisorial contractions
% \[
% (X,B) =: (X_0,B_0)\dasharrow (X_1,B_1) \dasharrow (X_2,B_2) \dasharrow \ldots,
% \] the flipping loci are disjoint from the log canonical locus $\mathrm{LCS}(X,B)$ after finitely many steps.
\end{theorem}
\begin{proof} The three dimensional case of Theorem \ref{t-specialter} is an immediate consequence of the proof of \cite[Theorem 4.2.1]{fujino05} and the two-dimensional minimal model program \cite{tanaka16_excellent}.  

% As for the log canonical case, we argue as in \cite[Proposition 4.2]{waldronlc}. %After finitely many steps, we can assume that the above sequence consists only of flips. 
% Let $f_i \colon X_i \to Z_i$ be the flipping contractions and let $(Y_0,B_{Y_0})$ be the dlt modification of $(X_0,B_0)$. Now run the $(K_{Y_0}+B_{Y_0})$-mmp over $Z_0$. This dlt MMP can be run by \cite[Theorem 1.1, 1.3, 1.5]{DW19} and it terminates by the above paragraph with a crepant dlt model $(Y_1,B_{Y_1})$ over $(X_1,B_1)$. Continuing this process we get a sequence of (compositions of) flips $Y_0 \dashrightarrow Y_1 \dashrightarrow Y_2 \dashrightarrow \ldots$ together with maps $g_i \colon Y_i \to X_i$. By the above paragraph, the flipping loci are disjoint from $\lfloor B_{Y_i} \rfloor$ after finitely many steps, and since $\lfloor B_{Y_i} \rfloor = g_i^{-1}\mathrm{LCS}(X_i,B_i)$ by $\Q$-factoriality of $(X_i,B_i)$, we get that $\ldots \dashrightarrow X_i \dashrightarrow X_{i+1} \dashrightarrow \ldots$ avoids $\mathrm{LCS}(X_i,B_i)$.
\end{proof}

\begin{theorem}\label{t-st4} Let  $(X,B)$ be a $\Q$-factorial four-dimensional dlt pair defined over a field of characteristic $p \geq 0$ or a DVR of characteristic $(0,p)$ for $p>0$, where $B$ is an effective $\mathbb R$-divisor. Let $\pi \colon X\to U$ be a projective morphism and suppose that \[(X,B) =: (X_0,B_0) \dasharrow (X_1,B_1) \dasharrow (X_2,B_2) \dasharrow \ldots\]
is a sequence of $(K_X+B)$-flips and divisorial contractions over $U$. Then after finitely many steps, the intersections of the flipping and flipped loci with the non-klt locus are at most one dimensional. In particular, the flipping and flipped loci for $X_i\dasharrow X_{i+1}$ cannot be both contained in $\lfloor B_i \rfloor$ and $\lfloor B_{i+1} \rfloor$.
%{\color{red}Assume that log resolutions of all fourfold log pairs with the underlying variety being birational to $X$ exist, and so do terminalisations for $4$-dimensional $\Q$-factorial klt pairs.}\footnote{CH: needed to apply Prop 2.10.} 

Moreover, if $\dim U=1$, $\kappa (K_X+B/U)\geq 0$,  and
$\lfloor B \rfloor$ contains the fibre $X_u$ for some point $u\in U$, then after finitely many steps, the flipping loci are disjoint from $X_u$. 
\end{theorem}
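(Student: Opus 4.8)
The plan is to deduce the ``absolute'' statement about families over curves from the first part of the theorem (special termination for fourfolds up to codimension two) together with the hypothesis $\kappa(K_X+B/U)\ge 0$. First I would observe that since $\dim U = 1$ and $\lfloor B\rfloor$ contains the fibre $X_u$, the divisor $X_u$ is a reduced union of components of $\lfloor B\rfloor$; by the first part of the theorem, after finitely many steps the flipping and flipped loci meet the non-klt locus — in particular they meet $X_u$ — in at most a curve. So I may assume from the outset that every flipping/flipped locus intersects $X_u$ in dimension $\le 1$, and I must upgrade this to: the flipping locus is \emph{disjoint} from $X_u$.

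The key point is a dimension-count on the fibre. Suppose for contradiction that infinitely many of the flips $X_i\dasharrow X_{i+1}$ have flipping locus meeting $X_u$. Since $\phi\colon X\to U$ is projective over a curve and $\kappa(K_X+B/U)\ge 0$, some effective divisor $D\sim_{\Q} K_X+B$ over $U$ exists, and its support cannot contain a whole fibre $X_u$ once we have ensured $X_u\subset\lfloor B\rfloor$ is not contracted (a component of $X_u$ is not $(K_X+B)$-negative on a flipping contraction because $(K_X+B)\cdot C = (K_X+B+X_u)\cdot C - X_u\cdot C$ and $X_u\cdot C \le 0$ for $C$ in a fibre — here I would use that $X_u=\Supp\phi^{-1}(u)$ pulls back from the base, so $X_u\cdot C=0$ for any curve $C$ contained in a fibre). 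Hence no step of the mmp contracts a component of $X_u$, and the strict transform of $X_u$ in each $X_i$ is a well-defined reduced divisor $(X_u)_i$ sitting inside $\lfloor B_i\rfloor$; moreover $(X_u)_i = \phi_i^{-1}(u)$ still pulls back from $U$, so it is relatively trivial and in particular nef and anti-nef over $Z_i$ for every flipping contraction $f_i\colon X_i\to Z_i$. Therefore the flipping locus, which is contracted by $f_i$, either is disjoint from $(X_u)_i$ or is entirely contained in it.

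Now I run the induced mmp on the fibre. Restricting to $S_i := (X_u)_i$ (or rather to a component $S_i^\nu$, using Lemma \ref{l-dltad} so that $(S_i^\nu,B_{S_i^\nu})$ is a $\Q$-factorial dlt threefold with $(K_X+B)|_{S_i^\nu}=K_{S_i^\nu}+B_{S_i^\nu}$), a flip of $X_i$ whose flipping locus lies in $S_i$ induces, after taking normalisations and using that $X_u$ is relatively trivial so $(K_{X_i}+B_i)|_{S_i^\nu}$ is still what governs negativity, a $(K_{S_i^\nu}+B_{S_i^\nu})$-flip or divisorial contraction in dimension three — here the bijection of strata in Lemma \ref{l-dltad} and the equality of coefficients with classical adjunction are what make the restricted pair behave correctly. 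Apply special termination for threefolds, Theorem \ref{t-st3} (valid in characteristic $p>5$): after finitely many steps the flipping loci on the $S_i^\nu$ are disjoint from $\lfloor B_{S_i^\nu}\rfloor$. But $\lfloor B_{S_i^\nu}\rfloor$ contains all the strata of $\lfloor B_i\rfloor$ meeting $S_i$, and in particular the locus where other boundary components of $X_u$'s neighbourhood meet; the upshot, exactly as in the threefold special-termination argument, is that no further flipping locus can be contained in $S_i = (X_u)_i$. Combined with the dichotomy from the previous paragraph (contained in $(X_u)_i$, or disjoint from it), this forces the flipping loci to be disjoint from $X_u$ after finitely many steps.

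\medskip

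The main obstacle I anticipate is the reduction step that lets me restrict the mmp to the fibre and invoke the three-dimensional result: I need to know that a $(K_{X_i}+B_i)$-flipping contraction whose exceptional locus lies in $S_i=(X_u)_i$ restricts to an honest step of a $(K_{S_i^\nu}+B_{S_i^\nu})$-mmp (not merely to some birational modification), that $\Q$-factoriality and dlt-ness are preserved on $S_i^\nu$ through the restriction, and that the process on the $S_i^\nu$ is genuinely a \emph{single} sequence to which Theorem \ref{t-st3} applies rather than a family of unrelated sequences. This is where the hypotheses $\kappa(K_X+B/U)\ge 0$ (to prevent $X_u$ from being contracted and to control which curves are $(K_X+B)$-negative) and $\Supp\phi^{-1}(u)\subset\lfloor B\rfloor$ (so that $X_u$ pulls back from the base and is relatively numerically trivial, giving the contained-or-disjoint dichotomy) do the real work, and carrying them through the adjunction of Lemma \ref{l-dltad} carefully is the technical heart of the argument.
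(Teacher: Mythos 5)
The dichotomy you extract — every flipping curve lies in a fibre of $\phi$, so $X_u\cdot C=0$ forces $C\subset X_u$ or $C\cap X_u=\emptyset$ — is a correct and useful observation. But the argument you build on it has a genuine gap, and it is the one you flag yourself. After you restrict to (normalisations of components of) $X_u$, the most that Theorem \ref{t-st3} can give is that the induced sequence on $S^\nu_i$ eventually avoids $\lfloor B_{S^\nu_i}\rfloor$; it does \emph{not} give termination of that induced sequence. So from special termination alone you cannot conclude "no further flipping locus can be contained in $(X_u)_i$": the restricted flips could keep happening in the klt locus of $S_i^\nu$, and those still sit inside $X_u$. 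Passing to klt and forcing termination is exactly what requires an AHK-type difficulty argument, and that is where $\kappa(K_X+B/U)\geq 0$ enters — not, as you suggest, to control which components of $X_u$ get contracted (that is automatic since flips are small and divisorial contractions drop $\rho$). There is also the subtler issue, which you acknowledge but do not resolve, that a flipping contraction whose locus lies in $X_u$ need not restrict to a flip or divisorial contraction on $S_i^\nu$; upgrading it to a genuine sequence of threefold mmp steps requires a Fujino-style interpolation with auxiliary mmp's over the $T_i$, which is a nontrivial extra layer.

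The paper's route is different and sidesteps both obstacles. It first rules out, finitely often, the case where the whole flipping locus lies in $X_u$ by the first part of the theorem plus the global bound $\dim\mathrm{Ex}(f_i)+\dim\mathrm{Ex}(f_i^+)\geq 3$. It then handles the complementary case (flipping loci dominating $U$) by localising at the generic point $\eta\in U$, showing that there the flipping loci miss $\lfloor B_{i,\eta}\rfloor$ (a short intersection-theoretic argument using that $2$-dimensional components of the flipping locus cannot lie in $\lfloor B_i\rfloor$), replacing $B_i$ by $\{B_i\}$ so the generic fibre is klt, and invoking Proposition \ref{proposition:AHK} — the difficulty function argument — which uses $\kappa\geq 0$ to produce an effective $M\sim_{\mathbb Q,U}K_X+B$ containing all flipping curves, hence forcing termination over $\eta$. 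In short: your plan substitutes "restrict to the special fibre and apply special termination" for the paper's "restrict to the generic fibre and apply difficulty-function termination," but special termination is strictly weaker than what is needed, so the substitution does not go through.
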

\begin{proof} 

Since divisorial contractions descrease the Picard rank, we may assume that the above sequence consists only of flips. Denote the flips by $\phi_i \colon X_i \dashrightarrow X_{i+1}$ and denote the flipping and the flipped contractions by $f_i \colon X_i \to Z_i$ and $f_i^+ \colon X_{i+1} \to Z_i$, respectively. Let $S$ be an irreducible component of $\lfloor B \rfloor$, let $S_i$ be its strict transform on $X_i$, and let $T_i := f_i(S_i)$. Note that the normalisation of $S_i$ need not be $\Q$-factorial.

 By \cite[Proposition 4.2.14]{fujino05} we may assume that $f^+_i|_{S_{i+1}} \colon S_{i+1} \to T_i$ is small. Moreover, if $f_i|_{S_i} \colon S_i \to T_i$ is divisorial, then the Picard rank as defined in \cite[Lemma 1.6]{AHK07} satisfies $\rho (S^\nu_i/U)>\rho(S^\nu_{i+1}/U)$, where $S^{\nu}_i$ and $S^{\nu}_{i+1}$ are appropriate normalisations. Thus again we may assume $f_i|_{S_i}$ is small, i.e.\ $\phi_i|_{S_i} \colon S_i \dashrightarrow S_{i+1}$ is of type (SS). The sum of the dimensions of the flipping and the flipped locus of $X_i\dasharrow X_{i+1}$  is at least three, so one of them must be 2-dimensional (in fact they must be of type $(2,1)$, $(2,2)$ or $(1,2)$), and so they cannot be both contained in the non-klt locus.\\

Now, assume that $\dim U=1$, $\kappa(K_X+B/U)\geq 0$, and $\lfloor B \rfloor$ contains the support of the fibre $X_u$ for some point $u\in U$.
As observed above, after finitely many steps,  $\lfloor B_i\rfloor$ contains no two-dimensional component of the flipping or flipped loci. 
 Suppose that the flipping locus (and thus also the flipped locus) is contained in the fibre over $u$ and hence in the non-klt locus. Since either the flipping or flipped locus is two dimensional,  this can not happen infinitely often. Therefore, we may assume that for $i\gg 0$ the flipping and flipped loci dominate $U$.
 
 Therefore, we can replace $(X,B)$ and $(X_i,B_i)$ by $(X_{\eta}, B_{\eta})$ and $((X_i)_{\eta}, (B_i)_{\eta})$, respectively, so that these are three-dimensional $\Q$-factorial pairs defined over an $F$-finite field $k(\eta)$ for the generic point $\eta \in U$. In particular, we can assume that log resolutions and terminalisations exist (see \cite{DW19} when $\eta$ is of positive characteristic and \cite{bchm06} when it is of characteristic zero).
 
 First we will show that $\lfloor B_i \rfloor$ is disjoint from the flipping, and so the flipped, locus. By what we have proven above, we may also assume that the intersection of the non-klt locus with the flipping and the flipped locus is at most zero-dimensional. Now, suppose that a flipping curve $C$ intersects $\lfloor B_i\rfloor$. Then $C \cdot \lfloor B_i \rfloor > 0$, and so the flipped locus in contained in $\lfloor B_{i+1} \rfloor$ which is impossible. %Therefore after finitely many steps, we may assume that the flipping locus (and, by the same argument, the flipped locus) does not intersect $\lfloor B_{i}\rfloor$.
 Now, replacing $B_i$ by $\{B_i\}$, we may assume that $(X_i,B_i)$ are klt, and so the mmp should terminate by Proposition \ref{proposition:AHK} for $U = \Spec k(\eta)$.
 
 However, the above replacement may render the assumption $\kappa(K_X+B) \geq 0$ invalid, so we need to be more careful. Precisely, write $K_X+B\sim _{\Q}M\geq 0$ where the supports of the strict transforms $M_i$ on $X_i$ contain the flipping loci for all $i$. Since the flipping loci are disjoint from $\lfloor B_i \rfloor$, the above $(K_X+B)$-mmp (equivalently, a $(K_X+B+\epsilon M)$-mmp), is also a $(K_X+\{B\} + \epsilon M)$-mmp for $0 < \epsilon \ll 1$. Since the flipping loci are contained in $\Supp(\{B\}+ \epsilon M)$, this mmp terminates by  Proposition \ref{proposition:AHK}.

\qedhere

\end{proof}

\begin{proposition} \label{proposition:AHK} Let  $(X,B)$ be a $\Q$-factorial klt log pair of dimension $n$ defined over a field of characteristic $p\geq 0$  where $B$ is an effective $\mathbb R$-divisor. Let $\pi \colon X\to U$ be a projective morphism and suppose that \[(X,B) =: (X_0,B_0) \dasharrow (X_1,B_1) \dasharrow (X_2,B_2) \dasharrow \ldots\]
is a sequence of $(K_X+B)$-flips over $U$.

Assume that log resolutions of all $n$-dimensional log pairs with the underlying variety being birational to $X$ exist, and so do terminalisations for $n$-dimensional $\Q$-factorial klt pairs. Then it can happen only finitely many times that the flipping or the flipped locus has a component of codimension two in $X_n$ and which is contained in $\Supp\, B_n$. In particular, if $\kappa(K_X+B/U) \geq 0$, then the flipping locus is of codimension at least three for $n \gg 0$.
\end{proposition}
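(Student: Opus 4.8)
The plan is to argue by contradiction, combining monotonicity of log discrepancies along a sequence of flips with special termination for threefolds (Theorem \ref{t-st3}) and Fujino's Proposition 4.2.14, plus Shokurov's difficulty function for klt pairs. Since flips are isomorphisms in codimension one, the set of prime divisors occurring in $B$ and their coefficients are independent of the step; write $B=\sum_i b_iS^{(i)}$ with $0<b_i<1$ and let $S^{(i)}_n\subseteq X_n$ be the birational transform. Assume the first assertion fails; after passing to a subsequence and applying the pigeonhole principle we may assume that one fixed component $S:=S^{(j)}$, of coefficient $b:=b_j$, has the property that for every $n$ either the flipping locus of $X_n\dashrightarrow X_{n+1}$ has a codimension-two (in $X_n$) component contained in $S_n$, or the flipped locus has a codimension-two (in $X_{n+1}$) component contained in $S_{n+1}$. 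Such a component is exactly a prime divisor on the normalisation contracted by the restriction $f_n|_{S_n}\colon S_n^\nu\to T_n^\nu$ (resp. $f^+_{n+1}|_{S_{n+1}}\colon S_{n+1}^\nu\to T_n^\nu$) of the flipping (resp. flipped) contraction, where $T_n$ is the image of $S_n$ in the flipping base $Z_n$. By Fujino's Proposition 4.2.14, applied exactly as in the proof of Theorem \ref{t-st4}, after finitely many steps $f^+_{n+1}|_{S_{n+1}}$ is small, so for $n\gg 0$ the bad data must come from the flipping side: $f_n|_{S_n}$ is divisorial.

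The heart of the proof is to derive a contradiction from infinitely many such divisorial $f_n|_{S_n}$, which I would do by restricting the flip sequence to $S$. Set $L_n:=(K_{X_n}+B_n)|_{S_n^\nu}$, anti-ample over $T_n^\nu$, and write $L_n=K_{S_n^\nu}+\Gamma_n$ with $\Gamma_n$ the birational transform of $\Gamma_0$. After passing to a $\mathbb{Q}$-factorial dlt model of $(X_0,S_0+(B_0-bS_0))$ and restricting there (legitimate by the adjunction machinery of Lemma \ref{l-dltad} together with the assumed existence of log resolutions and terminalisations in dimension $n$), we may take $(S^\nu_n,\Gamma_n)$ to be dlt with $\Gamma_n\ge 0$, so that $S^\nu_n\dashrightarrow S^\nu_{n+1}$ is a composite of steps of a $(K_{S^\nu_n}+\Gamma_n)$-minimal model program over $U$ which, for $n\gg 0$, consists of a divisorial contraction followed by a small map. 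A divisorial contraction strictly decreases the relative Picard number $\rho(S^\nu_n/U)$ in the sense of \cite[Lemma 1.6]{AHK07} (valid for possibly non-$\mathbb{Q}$-factorial $S^\nu_n$) and small maps do not increase it; hence $\rho(S^\nu_n/U)$ strictly decreases along the bad subsequence, which is absurd. (Shokurov's difficulty $\mathrm d(X_n,B_n):=\#\{E:a_E(X_n,B_n)<1\}$ is finite — on a log resolution it is a monomial computation on an snc klt pair — and non-increasing along flips, and after reducing to bounded-denominator coefficients the refined quantity $\sum_{a_E<1}(1-a_E(X_n,B_n))$ is non-increasing and discretely valued, hence eventually constant; this disposes directly of all bad centres through which a divisor of log discrepancy $<1$ passes and reduces matters to the ``terminal-type'' centres handled by the Picard-number argument.) This contradiction proves the first assertion.

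For the last statement, using $\kappa(K_X+B/U)\ge 0$ choose an effective $\mathbb{R}$-divisor $M_0$ with $M_0\sim_{\mathbb{R}}K_{X_0}+B_0$ over $U$, with birational transform $M_n\sim_{\mathbb{R}}K_{X_n}+B_n$ over $U$, so $M_n\ge 0$. Every curve contracted by the flipping contraction $f_n$ is $(K_{X_n}+B_n)$-negative, hence $M_n$-negative, hence contained in $\Supp M_n$; thus the whole flipping locus lies in $\Supp M_n$. For $0<\epsilon\ll 1$ the pair $(X_0,B_0+\epsilon M_0)$ is klt and $\mathbb{Q}$-factorial, the given sequence is also a sequence of $(K+B+\epsilon M)$-flips (intersection numbers are rescaled by $1+\epsilon$), and $\Supp(B_n+\epsilon M_n)\supseteq\Supp M_n$ contains the flipping locus; applying the first part of the Proposition to $(X_0,B_0+\epsilon M_0)$, for $n\gg 0$ the flipping locus has no codimension-two component, hence — being always of codimension at least two — has all components of codimension at least three.

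The main obstacle is the restriction step in the second paragraph: reducing the restriction of a klt flip sequence to a boundary component of coefficient $<1$ to an honest minimal model program of a dlt $(n-1)$-fold. One must control the non-normality and non-$\mathbb{Q}$-factoriality of $S_n$, the non-effective contribution $-(1-b)(S_n|_{S_n^\nu})$ appearing in the naive adjunction $(K_{X_n}+B_n)|_{S_n^\nu}$, and the fact that Fujino's Proposition 4.2.14 is phrased in the dlt setting; it is precisely here that the hypotheses on log resolutions and terminalisations in dimension $n$ enter.
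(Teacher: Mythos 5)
The last paragraph of your proposal (perturbing $B$ by $\epsilon M$ with $M\sim_{\mathbb R,U}K+B$ effective, so that flipping curves land in $\Supp M$, and invoking the first part) matches the paper's proof of the final assertion and is fine. The main assertion, however, rests on a step that does not hold, and you correctly flag it as "the main obstacle" without resolving it. Since the pair $(X,B)$ is klt, every component $S$ of $B$ has coefficient $b<1$. Adjunction to $S^\nu$ then gives
\[
(K_{X_n}+B_n)|_{S_n^\nu} \;=\; K_{S_n^\nu} + \operatorname{Diff}_{S_n^\nu}(B_n - bS_n) - (1-b)\,S_n|_{S_n^\nu},
\]
and the last term makes $\Gamma_n$ non-effective in general, so the restricted map $S_n^\nu\dashrightarrow S_{n+1}^\nu$ is not a step (or composite of steps) of any minimal model program for an honest log pair on the divisor. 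Passing to a dlt model of $(X_0,S_0+(B_0-bS_0))$ does not repair this, because the original sequence is a $(K+B)$-flip sequence and not a $(K+S+(B-bS))$-flip sequence, so it does not lift to crepant steps of an MMP on that dlt model. For the same reason Fujino's Proposition 4.2.14 (which you invoke to reduce to the case that $f_n|_{S_n}$ is divisorial) is unavailable: it is a coefficient-one statement, and nothing like it holds for components of coefficient $<1$. Consequently the Picard-number monotonicity argument — which is valid only when the restricted maps are honest (non-positive) birational contractions of log pairs — cannot be run.

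Your parenthetical fallback via difficulty functions also has a gap as stated: $\sum_{a_E<1}(1-a_E(X_n,B_n))$ is \emph{not} discretely valued, so "non-increasing hence eventually constant" fails. The paper (following \cite{AHK07}) avoids exactly this by working with the weighted difficulties $d^\pm_\alpha$ for a finite set of thresholds $\alpha$ determined by the coefficients of $B$: each time the flipping (resp.\ flipped) locus carries a valuation with $a_v=\alpha$, the quantity $d^-_\alpha$ (resp.\ $d^+_\alpha$) drops by the fixed amount $\geq 1-\alpha$. The finite list of thresholds comes from observing that blowing up a codimension-two centre in $\Supp B_n\setminus\Sing X_n$ gives a discrepancy of the form $1-\sum m_ib_i$, of which there are only finitely many in $(-1,1)$; the centres in $\Sing X_n$ are handled separately via ACC for surface mlds (Lemma~\ref{l-acc-for-mlds}), and the correction terms $\rho(\tilde B_i)$ and $\sum_{\tilde C_{i,j}}W^+_\alpha(b_i)$ in the definition of $d^\pm_\alpha$ are what make the function genuinely non-negative and genuinely decrease at a bad step. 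None of this valuation-theoretic bookkeeping appears in your restriction-to-a-divisor strategy, and it cannot be replaced by it for coefficient $<1$.
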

\noindent This result holds in a more  general setting (that is, as in \cite{kollar13} or \cite{BMPSTWW20}).
\begin{proof}
This follows by the same argument as  \cite[Theorem 2.15]{AHK07}. For the convenience of the reader, we recall the argument briefly.
Define the following functions $w^{-}_{\alpha}, w^{+}_{\alpha}, W^{-}_{\alpha}, W^+_{\alpha} \colon (-\infty, 1) \to \mathbb{R}_{\geq 0}$ for $\alpha \in (0,1)$ by
\begin{align*}
w_{\alpha}^-(x) &= 1-x \text{ for } x \leq \alpha, \text{ and } w_{\alpha}^-(x) =0 \text{ otherwise, }\\
w_{\alpha}^+(x) &= 1-x \text{ for } x < \alpha, \text{ and } w_{\alpha}^+(x) =0 \text{ otherwise, }\\
W_{\alpha}^-(b) &= {\textstyle \sum_{k=1}^{\infty}} w_{\alpha}^-(k(1-b)), \\
W_{\alpha}^+(b) &= {\textstyle \sum_{k=1}^{\infty}} w_{\alpha}^+(k(1-b)).
\end{align*}
For any sub-terminal log pair $(Y,B_Y)$ and $B_Y = \sum b_i B_i$ with $B_i$ irreducible, we define the difficulty (\cite[Definition 2.3]{AHK07})
\[
d^+_{\alpha}(Y,B_Y) = \sum_{b_i \leq 0} W^{+}_{\alpha}(b_i) \rho(\tilde B_i) + \sum_{v} w_{\alpha}^+(a_v) - \sum_{\tilde C_{i,j}} W^{+}_{\alpha}(b_i), \text{ where}\] 
\begin{itemize}[leftmargin=*]
	\item $\rho$ is as in \cite[Lemma 1.6]{AHK07} (in particular, stable under small birational maps), 
	\item $v$ is taken over all valuations which are not echoes on $Y$ (here, $a_v$ denotes the discrepancy of $(Y,B_Y)$ at $v$), and 
	\item $\tilde C_{i,j}$ are codimension one irreducible components in $\tilde B_i$  of the inverse image of a codimension two integral subscheme $C \subseteq Y$ under the normalisation $\bigsqcup \tilde B_i \to \bigcup B_i$. Here, we exclude those $C$ which are centres of echoes.
\end{itemize}
We say that an irreducible $C \subseteq Y$ is a \emph{centre of an echo} if it is of codimension two, contained in exactly one $B_i$, but not in $\mathrm{Sing}\, B_i$. Here, a valuation is called a \emph{($k$-th) echo} for such $C$ if it corresponds to the last exceptional divisor of a sequence of $k$ blow-ups at strict transforms of $C$ (see \cite[Example 1.4]{AHK07}). The discrepancy of the $k$-th echo is $k(1-b_i)$.  The difficulty is well defined and finite, because there are only finitely many valuations with discrepancy in $(-1,0]$, and also finitely many in $(0,1)$ if we exclude echoes (\cite[Lemma 1.5]{AHK07}; here we use log resolutions). 

We can similarly define $d^-_{\alpha}$. The difficulties are stable under log pullbacks (\cite[Lemma 2.7]{AHK07}), and so we extend these definitions to arbitrary $\Q$-factorial klt pairs by taking the difficulty of a terminalisation (\cite[Definition 2.8]{AHK07}). The difficulties are non-negative for klt pairs (\cite[Lemma 2.9]{AHK07}) and decreasing for a sequence of flips $(X_n,B_n) \dashrightarrow (X_{n+1},B_{n+1})$ if we start with $n \gg 0$ (\cite[Theorem 2.12]{AHK07}).

We are ready to give the proof of the proposition. First, we show that there are only finitely many flips with flipping or flipped loci admitting a component $C$ of codimension two and contained in $\mathrm{Sing}\, X_n$ (this is \cite[Lemma 2.14]{AHK07} with the use of Bertini replaced by localisation at $C$). By taking $n\gg 0$, we can assume that the places with discrepancies smaller or equal to zero are stable under flips: let $E_1, \ldots, E_m$ be such places over $X_n$ with discrepancies $a_1, \ldots, a_m \leq 0$. In particular, the minimal discrepancy of the two-dimensional singularity obtained by localisation at $C$ must be equal to $a_i$ for some $i$. It is known by experts that mlds for klt surface singularities satisfy the ascending chain condition, and since flips increase discrepancies, the statement follows. As this result is unpublished, we refer to Lemma \ref{l-acc-for-mlds} instead.

We are left to show that there are only finitely many flips with flipping or flipped loci admitting a component $C$ of codimension two and contained in $\Supp\, B_n$ but not $\mathrm{Sing}\, X_n$. The blow-up along $C$ produces a divisor with discrepancy $1 - \sum m_i b_i$ for $m_i \in \mathbb{Z}_{\geq 0}$. There are only finitely many such discrepancies, and, thus, it is enough to prove the following claim: given $\alpha \in (-1,1)$ there cannot be infinitely many flips for which there exists a valuation $v$ with $a_v = \alpha$ and centre contained in the flipping or the flipped locus (\cite[Theorem 2.13]{AHK07}). The case $\alpha \in (-1,0]$ follows by finiteness, while for $\alpha \in (0,1)$ we notice that $d^-_{\alpha}$ drops by at least $1-\alpha$ if the flipping locus admits such a valuation, and so does $d^+_{\alpha}$ for the flipped locus. Since the difficulties are positive and finite, this can only happen finitely many times. 

The last assertions follows by replacing $B$ by $B + \epsilon M$ for $0 \leq M \sim_{\Q, U} K_X + B$ and $0 < \epsilon \ll 1$, since then every flipping curve $\Sigma$ satisfies $\Sigma \cdot M < 0$, and so is contained in $M$.
\end{proof}

\begin{lemma} \label{l-acc-for-mlds} Fix $m \in \mathbb{N}$ and $\epsilon>0$. Consider all $\epsilon$-lc excellent two-dimensional pairs $(S,B)$ with the minimal log resolution having at most $m$ distinct discrepancies. Then the set of all these discrepancies for such surfaces is finite. 
\end{lemma}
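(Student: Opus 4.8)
The plan is to reduce the statement to the combinatorics of the weighted dual graph of the minimal resolution of a klt surface singularity, and to use the bound $m$ on the number of distinct discrepancies precisely to control the chains of $(-2)$-curves, which are the only source of unboundedness there.

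First I would make some reductions. As $\epsilon>0$, the pair $(S,B)$ is automatically klt. The discrepancy $a(E;S,B)$ of a divisorial valuation over $S$ and the $\epsilon$-lc property are unchanged under extension of the residue field, so after base change we may assume the base field algebraically closed. Let $f\colon T\to S$ be the minimal log resolution; since klt surface singularities are rational, the exceptional curves $E_1,\dots,E_n$ are smooth rational and their dual graph $\Gamma$ is a tree. Writing $K_T+f_*^{-1}B+\sum_i d_iE_i=f^*(K_S+B)$ and intersecting with each $E_j$, the vector $\mathbf d=(d_i)$ solves the linear system $M\mathbf d=\mathbf w$, where $M=(E_i\cdot E_j)$ is the negative definite intersection matrix and $w_j=2+E_j^2-(f_*^{-1}B\cdot E_j)$; being $\epsilon$-lc amounts to $d_i\le 1-\epsilon$ for all $i$ (and the coefficients of $B$ being $\le 1-\epsilon$).

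Now the combinatorial heart. Along any maximal chain $E_a-E_{a+1}-\cdots-E_b$ of $(-2)$-curves disjoint from $f_*^{-1}B$ that meets the rest of $\Gamma$ only at its two ends, the equations $d_{j-1}-2d_j+d_{j+1}=0$ for $a<j<b$ force $j\mapsto d_j$ to be affine; so if such a chain has more than $m$ vertices, the bound on the number of distinct discrepancies forces the affine function to be constant, and all its $d_j$ equal one value determined by the two neighbours. On the other hand, the inequalities $d_i\le 1-\epsilon$ fed into $M\mathbf d=\mathbf w$, together with negative definiteness of $M$ (equivalently, the classification of klt surface dual graphs, suitably extended to the excellent setting), bound in terms of $\epsilon$ alone the self-intersection numbers $E_i^2$, the lengths of all branches of $\Gamma$ other than the $(-2)$-chains just described, and the multiplicities with which $f_*^{-1}B$ meets $\Gamma$: a branch not of that type which was too long or too negative would produce a discrepancy below $-1+\epsilon$. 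Contracting the long uniform $(-2)$-chains thus leaves $\Gamma$ among finitely many weighted trees, whose number depends only on $m$ and $\epsilon$.

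To conclude, for each such weighted tree there are finitely many ways for $f_*^{-1}B$ to meet the exceptional locus and, noting that the coefficients of $B$ met near the singular locus lie in a fixed finite set (this is the case in the applications, the coefficients being pulled back from a fixed boundary through the steps of the minimal model program), finitely many possibilities for $\mathbf w$; hence $\mathbf d=M^{-1}\mathbf w$ takes finitely many values, and together with the single value on each collapsed $(-2)$-chain only finitely many discrepancies occur altogether. The main obstacle is the second half of the third step: making the bound on branch lengths and self-intersections precise for excellent surface singularities in arbitrary characteristic, where one should argue directly from $M\mathbf d=\mathbf w$, the negative definiteness of $M$, and $d_i\le 1-\epsilon$ rather than quote the classification of quotient singularities. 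The role of the affine-recursion observation is exactly that it lets the hypothesis on the number of distinct discrepancies control the $(-2)$-chains, which negative definiteness alone cannot bound.
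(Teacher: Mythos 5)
Your overall plan coincides with the paper's: reduce to the linear system determined by the dual graph of the minimal resolution, use the hypothesis on the number $m$ of distinct discrepancies to tame long chains of $(-2)$-curves (over which the discrepancies satisfy an affine recursion, hence are constant once the chain is longer than $m$), and bound the remaining combinatorics. There are, however, two concrete gaps.

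First, the reduction to an algebraically closed base field is problematic in the excellent setting. Over an imperfect field $k$ the surface $S$ need not remain normal (or even reduced) after base change to $\bar k$, the minimal resolution need not base change to the minimal resolution, and the exceptional curves need not be smooth rational over the residue field $k(s)$. The paper avoids this entirely by following \cite{kollar13}: the $C_i$ are conics over $H^0(C_i,\OO_{C_i})$, one sets $r_i=\dim_{k(s)}H^0(C_i,\OO_{C_i})$ (so $r_i\le 4$, $\deg\omega_{C_i}=-2r_i$, $C_i^2=-r_ic_i$), and the whole argument is carried out without reducing to $\bar k$.

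Second, and more seriously, the claim that the $\epsilon$-lc inequalities $d_i\le 1-\epsilon$ alone bound the lengths of branches other than the interior $(-2)$-chains is false. Take the dual graph $(-3)-(-2)-\cdots-(-2)$ with $n+1$ vertices, no boundary, and no fork. Solving $M\mathbf d=\mathbf w$ gives $d_j=(n+1-j)/(2n+3)$, so all $d_j<\tfrac12$: the singularity is $\epsilon$-lc for every $\epsilon<\tfrac12$, for all $n$. But the chain is a terminal $(-2)$-chain, it is not contracted by your two-ended-chain argument, and the number of distinct discrepancies is $n+1$. So $\epsilon$-lc does not bound $n$; only the hypothesis on $m$ does. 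The paper's fix is the convexity statement $2a_2\le a_1+a_3$ (with equality only for a $(-2)$-curve disjoint from $f_*^{-1}B$ with trivial intersections): after contracting maximal constant stretches on each leg, the log discrepancies along a leg become strictly concave, so each value appears at most twice, and the leg length is bounded in terms of $m$ and the bound $r_i\le 4$. You flagged this step as the "main obstacle," which was the right instinct, but the way you propose to close it (appealing to $\epsilon$-lc and negative definiteness) does not work; the bound has to come from $m$ via convexity, exactly as in the paper.

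One further remark you make is fair: in order to pass from "the reduced system has bounded size and bounded coefficients" to "finitely many discrepancies," one also needs the coefficients of $f_*^{-1}B$ (hence the entries of $\mathbf w$) to lie in a fixed finite set, which is what holds in the application (the boundary coefficients come from a fixed pair through the MMP). The lemma as stated does not make this restriction explicit, and you are right to notice that it is being used.
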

\noindent \noindent This result holds in a more  general setting (that is, as in \cite{kollar13} or \cite{BMPSTWW20}).
%\noindent This results holds in the most general setting (as in \cite{kollar13}).
\begin{proof} Let $f \colon S'\to S$ be the minimal resolution at a point $s \in S$ with $k$ exceptional curves $C_i$ (which are $k(s)$-schemes) and let $\Gamma$ be  the corresponding graph. Write $K_{S'} + B_{S'} = f^*(K_S+B_S)$  and let $a_i$ be the log discrepancy of $C_i$ in $(S',B_{S'})$. We will use the results of \cite{kollar13}. Set $r_i:=\dim _{k(s)}H^0(C_i, \mathcal O _{C_i})$. Then $C_i^2=-r_ic_i$ for $c_i \in \mathbb{Z}$. Note that $C_i$ are conics over the field $H^0(C_i, \mathcal{O}_{C_i})$ and in particular $\deg \omega _{C_i}=-2r_i$ (\cite[Reduction 3.30.2]{kollar13}). By adjunction,
\[
a_iC^2_i = (K_{S'} + B_{S'} + a_iC_i) \cdot C_i  \geq -2r_i,
\]
and so $-c_i \leq \frac{2}{a_i}$ is bounded. By classification (\cite[3.31 and 3.41]{kollar13}), $r_i \leq 4$, and so $-C_i^2$ is bounded. Further, $C_i \cdot C_j$ is bounded as well for every $i\neq j$ (see \cite[3.41]{kollar13}).

Recall that $\Gamma$ is a tree with at most one fork and three legs (\cite[3.31]{kollar13}), there are no $-1$-curves, and the convexity of discrepancies holds (\cite[Proposition 2.37]{kollar13}), that is, given three consecutive curves $C_1$, $C_2$, and $C_3$ on a leg, we have $2a_2\leq a_1+a_3$. In fact, it follows easily from the same proof that if $2a_2= a_1+a_3$, then $c_2=-2$, $f^{-1}_*B\cdot C_2=0$, and $C_2 \cdot C_1 = C_2 \cdot C_3 = r_2$.

By the negativity lemma, the $k$ equations 
\[
C^2_i = -2r_i + B_{S'}\cdot C_i
\]
are linearly independent and uniquely determine the discrepancies.
Given a maximal sequence of successive curves $C_1,\ldots, C_r$ on one leg with the same discrepancies $a_i$ and parameters $r_i$, we remove the equations corresponding to $C_2, \ldots, C_{r-1}$ and replace all occurrences of $a_2,\ldots, a_r$ by $a_1$. This operation does not change the set of solutions (indeed, this is equivalent to adding equations $a_1=a_2=\ldots=a_r$ which does not change the set of solutions but renders the equations corresponding to $C_2, \ldots, C_{r-1}$ trivial), and so, eventually, given the shape of $\Gamma$ (\cite[3.31]{kollar13}), we are left with bounded-in-$m$ number of equations with bounded coefficients and in the same or lower number of variables. Therefore the set of discrepancies is bounded in terms of $m$ as well.\qedhere

%We claim that removing $C_2$ from the graph (equivalently, taking a contraction $g \colon S' \to S''$ of $C_2$ with $S''$ having canonical singularities) does not change any numerics and so preserves the discrepancies. Indeed, for $g^*g(C_1) = C_1 + tC_2$, we have $0 = (C_1+tC_2)*C_2 = r_1 -2tr_1$, and so $t=\frac{1}{2}$ and $g(C_1)^2 = (C_1 + \frac{1}{2}C_2) \cdot C_1 = $ 

%If there are more than $3m+7$ vertices, then we must have three successive curves $C_1,C_2,C_3$ on one leg with the same discrepancies $a_1=a_2=a_3$ and thus $c_2=-2$.
%We can then construct a new graph by shortening the corresponding chains of $-2$ curves without changing the discrepancies (equivalently, we contract the interior $-2$-curves in a chain of such, allowing canonical singularities on the resolution). There are only finitely many possible kinds of these graphs and so the minimal log discrepancies belong to a finite set only.
%y \cite[Corollary 2.9]{Alexeev92}, $c_j\leq 2/\epsilon$ and so the $c_j$ take on finitely many values. It follows easily that there are finitely many possible kinds of these graphs. 
\end{proof}

\begin{remark} It is expected that one can prove the special termination 
of sequences of the fourfold mmp with scaling. We do not pursue it here.\end{remark}
\section{Extending sections}
In this section we prove an extension result for purely F-regular (and purely $+$-regular) pairs. In particular, the following proposition implies Theorem \ref{t-flips_exist+}. Our original arxiv submission covered the positive characteristic case only. Its generalisation to mixed characteristic was then obtained in \cite[Proposition 3.28]{TY20}. For the convenience of the reader, we append the proof of the mixed characteristic case at the end of the argument  using the techniques of \cite{BMPSTWW20}.

\begin{proposition}\label{p-1}Let $(X,S+A+B)$ be a $\Q$-factorial dlt pair of dimension at least three defined over an $F$-finite field of characteristic $p>0$ (resp.\ over a DVR of characteristic $(0,p)$ for $p>0$) where $\lfloor S+A+B\rfloor =S+A$, the $\Q$-Cartier Weil divisor $A$ is ample, and the Weil divisor $S$ is irreducible. Further, let $f\colon X\to Z$ be a contraction with $Z$ an affine variety such that $(X,S+(1-\epsilon)A+B)$ is relatively purely F-regular (resp.\ completely  relatively purely $+$-regular) over $Z$ for any $\epsilon >0$. Write $K_S+B_S = (K_X+S+B)|_S$ and $A_S = A|_S$.

In the mixed characteristic case, we also assume that $-(K_X+S+B+A)$ is relatively ample and that closed points of $Z$ are of positive residue characteristic. Then for every $k\geq 1$ such that $k(K_X+S+B+A)$ is Cartier, we have \[|k(K_X+S+A+B)|_S=|k(K_S+A_S+B_S)|.\]
\end{proposition}
\noindent Note that $S$ is normal by Lemma \ref{l-das}. Also observe that the assumption on $\Q$-factoriality and the dimension being at least three is unnecessary in mixed characteristic. Further, the statement is in fact true over characteristic zero closed points by Remark \ref{r-xu} or by an analogous argument with $S^0$ replaced by $H^0$ using Nadel vanishing.
\begin{proof} \label{r-different-proof-p-1}
We tackle the positive characteristic case first. Let $F \in |k(K_S+B_S+A_S)|$. We will construct divisors $G_m \in |k(K_X+S+B+A) + mA|$ such that $G_m|_S = F + mA_S$ by descending induction. First, we take $M \gg 0$ for which we can construct $G_M$ by Serre vanishing. Now, assume that we constructed $G_{m+1}$ as above. To construct $G_m$ we proceed as follows. Write
\begin{align*}
L &= k(K_X+S+B+A) + mA \\
  &= K_X+S+B + (k-1)(K_X+S+B+A) + (m+1)A \\
  &\sim_{\Q} K_X+S+B + {\textstyle \frac{k-1}{k}}G_{m+1} + {\textstyle \frac{m+1}{k}}A.
\end{align*}
Since $L-(K_X+S+B + \frac{k-1}{k}G_{m+1})$ is ample, up to a small perturbation of the coefficients, by Proposition~\ref{p-ext} we have a surjection (the assumption on the non-Cartier locus of $A$, and hence of $L$, is satisfied as $(X,S+A+B)$ is dlt)
\[
S^0(X,S+B+{\textstyle \frac{k-1}{k}}G_{m+1}; L) \to S^0(S,B_S + {\textstyle \frac{k-1}{k}}G_{m+1}|_S; L|_S).
\]
Since $(S,B_S + \frac{k-1}{k}A_S)$ is relatively F-regular and 
\[
F + mA_S \geq {\textstyle \frac{k-1}{k}}G_{m+1}|_S - {\textstyle \frac{k-1}{k}}A_S = {\textstyle \frac{k-1}{k}}(F + mA_S),
\]
Lemma \ref{l-1} implies that the section corresponding to $F + mA_S$ lies in the image of the above map. This concludes the construction of $G_m$, and so we obtain a lift $G_0 \in |k(K_X+S+B+A)|$ of $F$.

In mixed characteristic, since surjectivity of maps of coherent sheaves may be checked on completions of closed points, we can complete at a closed point $z \in Z$ of positive residue characteristic and assume that $Z = \Spec R$ for a complete Noetherian local domain $R$ (note that this need not preserve $\Q$-factoriality). By \cite[Corollary 7.8]{BMPSTWW20} (and \cite[Corollary 7.5]{BMPSTWW20}), $S$ stays prime after the base change to the completion (here we use that $-(K_X+S+B+A)$ is relatively ample). Now the proof follows the positive characteristic case word for word with $F$-regular replaced by $+$-regular, Proposition \ref{p-ext} replaced by Proposition \ref{p-ext-mixed}, and the surjectivity of $S^0$ replaced by  the surjectivity of
\[
B^0_{S}(X,S+B+{\textstyle \frac{k-1}{k}}G_{m+1}; L) \to B^0(S,B_S + {\textstyle \frac{k-1}{k}}G_{m+1}|_S; L|_S). \qedhere
\]
\end{proof}
\begin{lemma} \label{l-1}Let $(X,B)$ be a projective globally F-regular pair defined over an affine variety over an $F$-finite field of characteristic $p>0$ (resp.\ a globally $+$-regular pair defined over a complete local Noetherian domain of characteristic $(0,p)$ for $p>0$), let $L$ be a Weil divisor, and let $
\Gamma$ be an effective $\Q$-divisor. If $g\in H^0(X, \OO_X(L))$ corresponds to a divisor $G\in |L|$ such that $G\geq \Gamma$, then $g\in S^0(X, B+\Gamma; L)$ (resp.\ $g\in B^0(X, B+\Gamma; L)$).
\end{lemma}
\begin{proof} First, we deal with the positive characteristic case. By replacing $X$ by its regular locus, we can assume that it is regular and hence factorial. Further, we perturb $B$ so that its index is not divisible by $p$. Since $(X,B)$ is globally F-regular, we have a splitting of
\[F^e_*\OO _X((1-p^e)(K_X+B))\to \OO _X,\]
and so $H^0(X, \OO_X) = S^0(X,B;\OO_X)$.

Thus, we have the following diagram
\begin{center}
\begin{tikzcd}
H^0(X,F^e_*\OO _X((1-p^e)(K_X+B))) \arrow[two heads]{d} \arrow{r}{\cdot g} & H^0(X,F^e_*\OO _X((1-p^e)(K_X+B)+p^eL)) \arrow{d} \\ 
H^0(X, \OO_X) \arrow{r}{\cdot g} & H^0(X, \OO_X(L)).
\end{tikzcd}    
\end{center}
 Therefore, $g$ is in the image of the map on global sections induced by the homomorphism
\[ H^0(X,F^e_*\OO _X((1-p^e)(K_X+B)+p^e(L-G)))\to H^0(X,\OO _X(L)).\]
Since $G\geq \Gamma$, this homomorphism factors through 
the homomorphism
\[H^0(X, F^e_*\OO _X((1-p^e)(K_X+B+\Gamma )+p^eL))\to H^0(X, \OO _X(L)),\]
concluding the proof. Indeed, the above map comes from taking the Grothendieck dual of $\OO_X(-G) \to F^e_*\OO_X((p^e-1)(B+\Gamma) - p^eG) \to F^e_*\OO_X((p^e-1)B)$.\\

In mixed characteristic, take a finite cover $f \colon Y \to X$ with $Y$ normal. By global $+$-regularity of $(X,B)$, the trace map
\[
\mathrm{Tr} \colon f_*\OO_Y(K_Y + \lceil-f^*(K_X+B)\rceil) \to \OO_X
\]
is split surjective (see the proof of \cite[Proposition 6.8]{BMPSTWW20}). Hence, by $B^0(X,B; \OO_X) = H^0(X, \OO_X)$ (see also \cite[Lemma 6.11]{BMPSTWW20}). By the same argument as above, $g$ is in the image of the map on global sections induced by
\[
H^0(Y, \OO_Y(K_Y + \lceil f^*(L-K_X-B-G)\rceil)) \to H^0(X,\OO_X(L)),
\]
and as $G \geq \Gamma$, this homomorphism factors through
\[
H^0(Y, \OO_Y(K_Y + \lceil f^*(L-K_X-B-\Gamma)\rceil)) \to H^0(X,\OO_X(L)),
\]
concluding the proof.
\end{proof}
\begin{remark} \label{r-xu} (C.\ Xu) Let $k$ be a field of characteristic zero or any field of positive characteristic, respectively. Suppose that $f\colon X\to Z$ is a flipping contraction and $\phi \colon X\dasharrow X^+$ is the flip of a dlt pair $(X,S+A+B)$, where both $-S$ and $A$ are $f$-ample Weil divisors, $S$ is irreducible, and $\lfloor S+A+B\rfloor = S+A$. Write $K_S+B_S = (K_X+S+B)|_S$ and $A_S=A|_S$.  Assume for simplicity that $S$ is normal.

We claim that $|k(K_X+S+A+B)|_S=|k(K_S+A_S+B_S)|$ for every integer $k>1$ such that $k(K_X+S+A+B)$ is integral (resp.\ any sufficiently divisible integer). This explains why we can obtain such strong liftability results, which are normally false when there is no ample divisor in the boundary (cf.\ \cite{HMK10}).

To see the claim, notice that if $S^+=\phi _*S$, then $S\dasharrow S^+$ extracts no divisors. In fact, if $P\subset S^+$ is an extracted divisor, then since $A^+=\phi _* A$ is $f^+\colon X^+\to Z$ anti-ample, it contains $P$ and hence there is a divisor $F$ over $X^+$ with centre $P$ such that the log-discrepancy $a_F(X^+,S^++A^++B^+)=0$ which is impossible, as $(X,S+A+B)$ is dlt and discrepancies improve after flips. 

Now, since $K_{X^+}+S^++A^++B^+$ is $f^+$-ample, by Kawamata-Viehweg vanishing (resp. Serre vanishing) 
\[ H^0(X^+, k(K_{X^+}+S^++A^++B^+))\to H^0(S^+, k(K_{S^+}+A_{S^+}+B_{S^+}))\] 
is surjective. Since $\phi$ is a small birational morphism, 
$
H^0(X^+, k(K_{X^+}+S^++A^++B^+))\cong H^0(X, k(K_{X}+S+A+B))
$
and since $S\dasharrow S^+$ is a $(K_S+A_S+B_S)$-non-positive birational contraction, 
\[
H^0(S^+, k(K_{S^+}+A_{S^+}+B_{S^+}))\cong H^0(S, k(K_S+A_S+B_S)).
\]
\end{remark}
\section{Relative Minimal Model Program over $\Q$-factorial fourfolds}
In this section we prove Theorem \ref{t-mmp+}.
\subsection{Existence of pl-flips with ample divisor in the boundary}
In order to show Theorem \ref{t-sflip}, we need the following lemma.
\begin{lemma}\label{l-2}
Let $(S,C+B)$ be a three-dimensional plt pair with standard coefficients defined over an $F$-finite field of characteristic $p>5$ (resp.\ a DVR of characteristic $(0,p)$ for $p>5$), where $f\colon S\to T$ is a projective birational morphism, $C$ is a prime divisor with $f|_{C} \colon C \to f(C)$ birational, and $-(K_S+C+B)$ is an $f$-ample $\Q$-divisor. 

Then $(S,C+B)$ is relatively purely F-regular (resp.\ completely relatively purely $+$-regular) over a neighbourhood of $f(C)\subset T$.

\end{lemma}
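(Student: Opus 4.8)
The plan is to deduce relative pure F-regularity of the threefold plt pair $(S,C+B)$ by applying inversion of F-adjunction (Lemma \ref{l-das}) with respect to the divisor $C$, which reduces the problem to showing relative F-regularity of the adjoint surface pair along $f(C)$. First I would set up the adjunction: let $C^\nu \to C$ be the normalisation and write $K_{C^\nu}+B_{C^\nu} = (K_S+C+B)|_{C^\nu}$. By Lemma \ref{l-dltad} (or rather its plt analogue), $(C^\nu, B_{C^\nu})$ is a klt surface pair, and since the coefficients of $B$ are standard, adjunction gives that $B_{C^\nu}$ again has standard coefficients. Since $-(K_S+C+B)$ is $f$-ample, restricting to $C$ we see that $-(K_{C^\nu}+B_{C^\nu})$ is ample over $f(C)$, and because $f|_C \colon C \to f(C)$ is birational, the induced morphism $g \colon C^\nu \to f(C)^\nu$ is birational as well. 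Thus $(C^\nu, B_{C^\nu})$ is a relative log Fano surface in the birational case.

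The key input is then \cite[Theorem 3.1]{hx13} (equivalently, the statement that birational relative log del Pezzo surface pairs over $F$-finite fields of characteristic $p>5$ are relatively globally F-regular): applying it to $g \colon C^\nu \to f(C)^\nu$ shows that $(C^\nu, B_{C^\nu})$ is relatively F-regular over $f(C)$. Having verified the hypotheses of Lemma \ref{l-das} — namely that $(S,C+B)$ is plt with $C = \lfloor C+B\rfloor$ prime, $f$ proper birational, $-(K_S+C+B)$ is $f$-ample, and $(C^\nu, B_{C^\nu})$ is relatively F-regular over $f(C)$ — we conclude that $(S,C+B)$ is purely globally F-regular (i.e.\ relatively purely F-regular over $T$) on a neighbourhood of $f(C)$, which is exactly the assertion.

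The main obstacle is ensuring that the characteristic-$p$ surface F-regularity statement \cite[Theorem 3.1]{hx13} applies in the required generality: it is stated over algebraically closed fields, so one needs to check it descends to an arbitrary $F$-finite field $k$ (of characteristic $p>5$) — this should follow by base change to $\overline{k}$ together with the fact that (relative) global F-regularity is insensitive to such field extensions, together with the standard-coefficients hypothesis which guarantees that after adjunction we remain in the del Pezzo regime where the bound $p>5$ suffices. A secondary point requiring care is the passage through normalisations: one must check that $C^\nu \to f(C)^\nu$ is genuinely the relevant birational morphism and that the different computed on $C^\nu$ matches standard adjunction, but this is precisely what Lemma \ref{l-dltad} (and the remarks following Theorem \ref{t-specialter}) provide.
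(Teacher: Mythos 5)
Your proof is essentially the same as the paper's: adjunction to the boundary surface, relative global F-regularity of the resulting klt surface pair in the birational log del Pezzo case, then inversion of F-adjunction via Lemma \ref{l-das}. The paper makes two small simplifications you should note: it first observes that $C$ itself is normal, by \cite[Proposition 4.1]{hx13} (equivalently \cite[Theorem G]{MSTWW20}), so no passage to $C^\nu$ is needed; and for the relative global F-regularity of the surface it cites \cite[Theorem 5.1]{daswaldron} (with \cite[Theorem 3.1]{hx13} and \cite[Proposition 2.9]{HW19b} as auxiliary references), which is proved directly over $F$-finite fields, thereby sidestepping the descent-from-$\overline{k}$ step you flag as the main obstacle. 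That descent is genuinely delicate here — $\overline{k}/k$ is not separable for non-perfect $F$-finite $k$, so geometric properties like normality, irreducibility of $C$, and the klt condition need not pass to the base change — so the cleaner route is to use the $F$-finite version of the surface F-regularity statement rather than try to reduce to the algebraically closed case.
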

\begin{proof} %We may assume that $T$ is normal (cf.\ \cite[Remark 2.6]{HW17}). 
%By \cite[Proposition 4.1]{hx13} (cf.\ \cite[Theorem G]{MSTWW20}), $C$ is normal. 

Let $\tilde{C}$ be the normalisation of $C$ and write $K_{\tilde C} + B_{\tilde C} = (K_S+C+B)|_{\tilde C}$. Then $(\tilde C, B_{\tilde C})$ is klt with standard coefficients (see Lemma \ref{l-dltad}). Since $\tilde C \to f(C)$ is birational, $(\tilde C, B_{\tilde C})$ is relatively globally $F$-regular (\cite[Theorem 5.1]{daswaldron}, cf.\ \cite[Theorem 3.1]{hx13}, \cite[Proposition 2.9]{HW19b}), or completely relatively globally $+$-regular (\cite[Theorem 7.14]{BMPSTWW20}), respectively. Lemma \ref{l-das} implies that $(S,C+B)$ is relatively purely F-regular (resp.\ completely relatively purely $+$-regular) over a neighbourhood of $f(C)$.

%We may assume that $T$ is normal (cf.\ \cite[Remark 2.6]{HW17}). 
%By \cite[Proposition 4.1]{hx13} (cf.\ \cite[Theorem G]{MSTWW20}), $C$ is normal.
% Further, by adjunction, if $K_C+B_C=(K_S+C+B)|_C$, then $(C,B_C)$ is klt with standard coefficients (see Lemma \ref{l-dltad}). Since $C\to f(C)$ is birational, $(C,B_C)$ is relatively globally F-regular (\cite[Theorem 5.1]{daswaldron}, cf.\ \cite[Theorem 3.1]{hx13}, \cite[Proposition 2.9]{HW19b}). By %\cite[Lemma 2.10]{HW17},
% Lemma \ref{l-das}, it follows that $(S,C+B)$ is relatively purely F-regular over a neighborhood of $f(C)$. 

\begin{comment}

Replacing $k$ by its algebraic closure, we may assume that $k$ is algebraically closed (cf.\ \cite[Remark 2.7(1)]{GNT16}). 
By Serre vanishing, for $e\gg 0$, we have surjective homomorphisms 
\[ f_*\OO _S((1-p^e)(K_S+C+B))\to f_*\OO _C ((1-p^e)(K_C+B_C)).\]
Since $(C,B_C)$ is globally F-regular, we have surjections
$f_*\OO _C ((1-p^e)(K_C+B_C))\to f_* \OO _C$ (induced by taking the trace of the Frobenius).
The composition factors through $ f_*\OO _S((1-p^e)(K_S+C+B))\to f_* \OO _S\to f_* \OO _C$ and therefore $f_* \OO _S\to f_* \OO _C$ is surjective.
Since $f|_S$ has connected fibers and $\OO _T \cong f_* \OO _S \to f_* \OO _C$ is surjective, and so $f_* \OO _C\cong \OO _{f(C)}$. It follows that $f|_C$ has connected fibers.

\end{comment}

\end{proof}  

\begin{theorem}\label{t-sflip} Let $(X,\Delta )$ be a four-dimensional $\Q$-factorial dlt pair with standard coefficients defined over an $F$-finite field $k$ of characteristic $p>5$ (resp.\ a DVR of characteristic $(0,p)$ for $p>5$) and $\phi \colon X\to Z$ a flipping contraction of a $(K_X+\Delta)$-negative extremal ray $R$ with $\rho(X/Z)=1$.
Suppose that there exist irreducible divisors $S,A\subset \lfloor \Delta \rfloor$ such that $R\cdot S<0$ and $R\cdot A>0$. 

Then the flip $(X^+,\Delta ^+)$ exists. Moreover, both $X$ and $X^+$ are relatively F-regular (resp.\ completely relatively $+$-regular) over a neighbourhood of the image of the flipping locus in $Z$.
\end{theorem}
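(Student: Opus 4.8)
The plan is to reduce to Proposition~\ref{p-1} (equivalently Theorem~\ref{t-flips_exist+}) by verifying its hypotheses and then upgrade the conclusion to the existence of the flip and the relative $F$-regularity of $X$ and $X^+$. First I would reduce to the case where $Z$ is affine (the question of existence of a flip is local on $Z$), and localise near the image $W := \phi(S)$ of the flipping locus; note $\operatorname{Ex}(\phi)\subseteq S$ since $\rho(X/Z)=1$, $R\cdot S<0$ and every other divisor is $\phi$-trivial or positive. By Theorem~\ref{t-st4} applied to the sequence consisting of the single flip in question --- or rather, since $S$ and $A$ are both components of $\lfloor\Delta\rfloor$ and the flipping and flipped loci cannot both lie in the non-klt locus --- I can arrange that after finitely many steps the relevant configuration holds; in the present pl-setting this is what forces $A$ to be $\phi$-ample rather than contracted, matching the hypothesis $R\cdot A>0$.

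Next I would check that $(X, S+(1-\epsilon)A+B)$ is relatively purely $F$-regular over $Z$ for all $0<\epsilon<1$, where $\Delta = S+A+B$ with $\lfloor B\rfloor=0$. This is the heart of the reduction and is exactly the ``apply inversion of $F$-adjunction twice'' step alluded to in the introduction: restrict $(X,S+\Delta')$ to $S$ via Lemma~\ref{l-dltad} to get a three-dimensional dlt (in fact plt near the generic point of the flipped/flipping curves) pair $(S^\nu, C + B_{S^\nu})$ where $C = A|_{S^\nu}$ is $f|_{S^\nu}$-ample and the contraction $S^\nu\to f(S^\nu)$ is birational; apply Lemma~\ref{l-2} to conclude $(S^\nu, C+B_{S^\nu})$ is relatively purely $F$-regular over $f(C)$; then apply Lemma~\ref{l-das} (inversion of $F$-adjunction) to pull this back up to $X$, obtaining relative pure $F$-regularity of $(X, S+(1-\epsilon)A+B)$ over $Z$. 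One must be careful that $S$ need not be normal in general, but Lemma~\ref{c-nuuh} and Proposition~\ref{p-das} handle the normality issues at the relevant codimension-one points, and Lemma~\ref{l-small} lets us ignore small modifications.

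With these hypotheses in hand, Proposition~\ref{p-1} (using that the MMP, hence finite generation of canonical rings, holds in dimension $3$ over $F$-finite fields of characteristic $p>5$ by the cited threefold results) gives the surjectivity
\[
H^0(X, m(K_X+S+A+B))\twoheadrightarrow H^0(S, m(K_S+A_S+B_S))
\]
for all sufficiently divisible $m>0$. Since $(S, A_S+B_S)$ is of dimension $3$ its canonical ring is finitely generated, and combined with the surjectivity and the fact that $\operatorname{Ex}(\phi)\subseteq S$ (so sections on $X$ are detected on $S$ up to the part supported on the flipping locus), the relative canonical ring $\bigoplus_m \phi_*\OO_X(m(K_X+\Delta))$ is finitely generated; its relative $\operatorname{Proj}$ over $Z$ is the flip $(X^+,\Delta^+)$. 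Finally, for the ``Moreover'' part: $X^+$ and $X$ are isomorphic in codimension one, so by Lemma~\ref{l-small} it suffices to prove relative $F$-regularity for one of them; pure $F$-regularity of $(X, S+(1-\epsilon)A+B)$ over $Z$ together with $\lfloor (1-\epsilon)A+B\rfloor = 0$ (for $\epsilon>0$) gives relative (strong) $F$-regularity of $X$ itself over a neighborhood of $W$, and transporting along the small map $\phi$ yields the same for $X^+$. The main obstacle I expect is the double application of inversion of $F$-adjunction in the non-normal setting --- keeping track of the different versus the $F$-different, the normalisations, and the standard-coefficient bookkeeping so that Lemma~\ref{l-2} genuinely applies to the restricted pair --- rather than the formal finite-generation argument, which is routine once the surjectivity of Proposition~\ref{p-1} is available.
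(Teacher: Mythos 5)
Your proposal follows the same overall strategy as the paper: localise on $Z$, apply the double inversion of $F$-adjunction (Lemma~\ref{l-2} followed by Lemma~\ref{l-das}) to verify pure $F$-regularity of $(X, S+(1-\epsilon)A+B)$ over $Z$, feed this into Proposition~\ref{p-1}, and then conclude via finite generation of the threefold canonical ring. However, there are two genuine gaps and one spurious detour worth flagging.

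First, the step from the surjectivity of Proposition~\ref{p-1} to finite generation of the full relative canonical ring is not just ``sections on $X$ are detected on $S$.'' What is actually needed is Shokurov's reduction to pl-flips: since $-S$ is $f$-ample, the relative adjoint algebra $R(K_X+S+A+B)$ is finitely generated if and only if the \emph{restricted algebra} $R_S(K_X+S+A+B)$ is, and the nontrivial direction uses that $-S$ is $f$-ample to bound the order of vanishing of sections along $S$. The paper invokes this explicitly as \cite[Lemma~2.3.6]{cortibook}. Proposition~\ref{p-1} then identifies $R_S$ with $R(K_S+A_S+B_S)$ in sufficiently divisible degrees, whose finite generation comes from the threefold results of \cite{DW19}. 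Your phrasing elides the Shokurov lemma entirely, and as written the inference does not go through --- the full ring could a priori fail to be finitely generated even if the restricted algebra is, absent the $-S$-ampleness argument.

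Second, you implicitly assume from the start that $\Delta = S+A+B$ with $\lfloor B\rfloor = 0$, but the theorem only assumes $S,A\subset\lfloor\Delta\rfloor$; in general $\lfloor\Delta\rfloor$ has further components. The paper handles this by replacing $\lfloor\Delta\rfloor - S - A$ with $(1-\frac{1}{m})(\lfloor\Delta\rfloor - S - A)$ for $m\gg 0$, which preserves the standard-coefficient hypothesis (needed for Lemma~\ref{l-2}) and the dlt property while reducing to the case $\lfloor\Delta\rfloor = S+A$. This perturbation is a necessary preliminary reduction, not merely cosmetic, because Lemma~\ref{l-2} operates on a \emph{plt} restricted pair.

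Finally, the detour through Theorem~\ref{t-st4} (``to arrange the relevant configuration'' and ``force $A$ to be $\phi$-ample'') is both unnecessary and confused: $R\cdot A>0$ is a \emph{hypothesis} of the theorem, not something to be arranged via special termination, and Theorem~\ref{t-st4} concerns sequences of flips rather than the existence of a single one. Dropping this paragraph entirely leaves the argument intact. You should also explicitly verify the birationality of $A_{S^\nu}\to f(A_{S^\nu})$ required by Lemma~\ref{l-2}; the paper notes this follows because $A_{S^\nu}$ is $f$-ample, hence not contracted.
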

\begin{proof} 
The statement is local and hence we may assume that $Z$ is affine and we work in a neighbourhood of some closed point $z\in Z$ of positive residue characteristic in the image of the flipping locus. Replacing $\lfloor \Delta \rfloor-S-A$ by $(1-\frac 1m)(\lfloor \Delta \rfloor-S-A)$ for some $m\gg 0$, we may assume that 
$\lfloor \Delta \rfloor=S+A$. Let $K_{S^{\nu}}+A_{S^{\nu}}+B_{S^{\nu}}=(K_X+S+A+B)|_{S^{\nu}}$ with $A_{S^{\nu}} = A|_{S^{\nu}}$ and $S^{\nu}$ being the normalisation of $S$. Then $(S^{\nu},A_{S^{\nu}}+B_{S^{\nu}})$ is a plt
threefold with standard coefficients. Since $A_{S^{\nu}}$ is relatively ample,  it is not exceptional, and so it is birational over its image in $Z$. As $-(K_{S^{\nu}}+A_{S^{\nu}}+B_{S^{\nu}})$ is relatively ample, Lemma \ref{l-2} implies that $(S^{\nu},A_{S^{\nu}}+B_{S^{\nu}})$ is purely F-regular (resp.\ completely purely $+$-regular)  over a neighbourhood of $f(A_{S^{\nu}})$. By Lemma \ref{l-das}, $(X,S+(1-\epsilon)A+B)$ is   purely F-regular (resp.\ completely purely $+$-regular) over a neighbourhood of $f(A_S)$ for any $\epsilon >0$. Since every flipping curve is contained in $S$ and intersects $A$, $(X,S+(1-\epsilon)A+B)$ is purely F-regular (resp.\ completely purely $+$-regular) over a neighbourhood of the image of the flipping locus. In particular, $S$ is normal (Lemma \ref{l-das}).

By Shokurov's reduction to pl-flips (see eg.\ \cite[Lemma 2.3.6]{cortibook}), it suffices to show that the restricted algebra \[R_S(K_X+S+A+B):={\rm Im}\left( R(K_X+S+A+B)\to R(K_S+A_S+B_S)\right)\]
is finitely generated.
By Proposition \ref{p-1} in fact $R_S(K_X+S+A+B)=R(K_S+A_S+B_S)$ (in divisible enough degrees). Since $(S,(1-\epsilon)A_S+B_S)$ is a klt threefold for $0 < \epsilon \leq 1$, the ring $R(K_S+A_S+B_S)$ is finitely generated (see \cite[Theorem 1.4 and 1.6]{DW19} when $S$ is purely positive characteristic and \cite[Theorem F and G]{BMPSTWW20} when $S$ is of mixed characteristic).%\cite[Theorem 2.9 and 2.13]{GNT06}.

If $\phi \colon X \dashrightarrow X^+$ is the corresponding flip, then since the flipping contraction $f \colon X \to Z$ is relatively F-regular (resp.\ completely relatively $+$-regular) over a neighbourhood of the image of the flipping locus and $\phi$ is an isomorphism in codimension two, the flipped contraction $f^+ \colon X^+ \to Z$ is also relatively F-regular (resp.\ completely relatively $+$-regular), see Lemma \ref{l-small}. 
\end{proof}

\begin{remark}
The same argument shows that Lemma \ref{l-2} holds in dimension two for every $F$-finite field of characteristic $p>0$ without the assumption that $B$ has standard coefficients, and so Theorem \ref{t-sflip} holds in dimension three for $p>0$ without the assumption that $\Delta$ has standard coefficients (the required result on the canonical ring being finitely generated in dimension two follows from \cite[Theorem 1.1 and 4.2]{tanaka16_excellent}). This gives a direct proof of the existence of flips needed in \cite{HW19a} without referring to \cite{hx13}.
\end{remark}

\subsection{Proof of Theorem \ref{t-mmp+}}
We start by constructing contractions.
\begin{proposition}\label{p-cont} Let $(X,S+B)$ be a $\Q$-factorial dlt fourfold defined over perfect field of characteristic $p>5$ (or a DVR of characteristic $(0,p)$ for $p>5$ with perfect residue field) and equipped with a projective morphism $\pi \colon X\to U$ to a quasi-projective variety. Let $\Sigma$ be a $(K_X+S+B)$-negative extremal ray over $U$ such that $S\cdot \Sigma <0$. Further, suppose the nef cone $\mathrm{NE}(X/U)$ is generated by $\mathrm{NE}(X/U)_{K_X+S+B{\geq}0}$ and countably many extremal rays which do not accumulate in $\mathrm{NE}(X/U)_{K_X+S+B<0}$.  Then the contraction $f\colon X\to Z$ of $\Sigma$ exists so that $f$ is a projective morphism with $\rho(X/Z)=1$. 
\end{proposition}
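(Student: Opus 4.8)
The plan is to contract the extremal ray $\Sigma$ by first producing a nef supporting $\mathbb{Q}$-Cartier divisor $L$ on $X/U$ with $L^\perp \cap \overline{\mathrm{NE}}(X/U) = \Sigma$, and then to show $L$ is semiample over $U$ by restricting to $S$ and invoking known base-point-free results in dimension $\leq 3$. First I would use the cone structure hypothesis: since $\overline{\mathrm{NE}}(X/U)$ is the closed cone spanned by the $K_X+S+B$-nonnegative part together with countably many discrete extremal rays in the negative part, a standard separation argument (as in the proof of the cone theorem, cf.\ \cite{KM98}) produces a $\pi$-nef $\mathbb{Q}$-Cartier divisor $L$ with $L \cdot \Sigma = 0$ and $L$ strictly positive on $\overline{\mathrm{NE}}(X/U) \setminus \Sigma$; moreover we may write $L \sim_{\mathbb{Q}, U} K_X + S + B + H$ for some $\pi$-ample $\mathbb{Q}$-divisor $H$, and after perturbing $H$ we may assume $(X, S+B+H)$ is still dlt with $\lfloor S+B+H\rfloor = S$ and $S \cdot \Sigma < 0$.

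Next I would restrict to $S$. Since $\Sigma$ is $L$-trivial and $S \cdot \Sigma < 0$, every curve in the ray is contained in $S$, so the question of whether $L$ is $\pi$-semiample reduces — via the standard argument that a divisor is semiample iff it is semiample on a divisor containing its non-semiample locus, together with the fact that $L$ is $\pi$-ample away from $\Sigma$ — to showing that $L|_{S^\nu}$ is semiample over $U$, where $S^\nu \to S$ is the normalisation. By Lemma \ref{l-dltad}, writing $K_{S^\nu} + \Delta_{S^\nu} = (K_X+S+B+H)|_{S^\nu}$, the pair $(S^\nu, \Delta_{S^\nu})$ is dlt of dimension three with standard coefficients, and $L|_{S^\nu} \sim_{\mathbb{Q}, U} K_{S^\nu} + \Delta_{S^\nu}$ is nef over $U$. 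The three-dimensional base-point-free theorem (in the form available over $F$-finite fields of characteristic $p > 5$, e.g.\ \cite{DW19}, \cite{birkar13}) then gives that $L|_{S^\nu}$ is semiample over $U$, hence so is $L|_S$, hence so is $L$; the resulting morphism $f \colon X \to Z$ over $U$ (Stein factorisation of the map defined by $|mL/U|$) contracts exactly $\Sigma$, and $\rho(X/Z) = 1$ since $L$ is the only extremal ray killed.

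The main obstacle I expect is the reduction step — arguing that semiampleness of $L$ over $U$ follows from semiampleness of $L|_S$ over $U$ — because in positive characteristic one cannot invoke Kawamata–Viehweg vanishing to lift sections freely. The clean way around this is to observe that $L - (K_X + S + B)$ is $\pi$-ample, so one is exactly in the situation where the $S^0$-extension results of Section 3 apply: Proposition \ref{p-ext} (or the argument behind Proposition \ref{p-1}) gives surjectivity of the restriction of the relevant (twisted Frobenius-stable) section modules from $X$ to $S$, and since $(S^\nu, \{\Delta_{S^\nu}\})$ is klt one has $S^0 = H^0$ there after perturbation, so sections of $mL|_S$ generating $mL|_S$ lift to sections of $mL$ that are base-point-free along $S$; combined with $\pi$-ampleness of $L$ off $\Sigma \subset S$ this yields $\pi$-semiampleness of $L$ on all of $X$. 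A secondary point to be careful about is that $S^\nu$ need not be $\mathbb{Q}$-factorial, but this does not affect the base-point-free theorem on $(S^\nu, \Delta_{S^\nu})$, which only requires the pair to be dlt (or one may pass to a $\mathbb{Q}$-factorial dlt modification and descend).
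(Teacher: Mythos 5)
Your high-level strategy is the same as the paper's: build a nef supporting $\Q$-divisor $L = K_X+S+B+H$ with $H$ $\pi$-ample and $L^\perp = \mathbb R[\Sigma]$, restrict to $S$, invoke the three-dimensional base-point-free theorem on $S^\nu$, and globalize via Keel. However, the crucial reduction step contains a genuine gap, and the fallback you propose does not repair it.

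\emph{The gap.} You assert that ``$L$ is $\pi$-ample away from $\Sigma$'' and so semiampleness of $L$ follows from semiampleness of $L|_S$. This is not correct: nefness of $L$ plus strict positivity on every curve not in the ray $\Sigma$ does \emph{not} imply that $L$ is ample off $\Sigma$. What Keel's theorem (\cite[Theorem 0.2]{Keel99}) actually requires is that the exceptional locus $\mathbb E(L)$ --- the union of closed subvarieties $V$ on which $L|_V$ fails to be big --- be contained in $S$, and this is a genuinely stronger condition than ``$L\cdot C>0$ for all curves $C\not\subset S$.'' The paper handles this by a further perturbation: choose an ample $\Q$-divisor $A$ with $(S+A)\cdot\Sigma=0$ and replace $H$ by $H_\epsilon\sim_\Q H+\epsilon(S+A)$ for $0<\epsilon\ll 1$, still dlt, nef, with $(L_\epsilon)^\perp=\mathbb R[\Sigma]$. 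Then for any $V\not\subset S$, $L_\epsilon|_V=(L+\epsilon(S+A))|_V$ is nef and big (since $A|_V$ is ample and $S|_V\geq 0$), so $\mathbb E(L_\epsilon)\subset S$. Without this step you cannot invoke Keel.

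\emph{The fallback does not help.} The $S^0$-extension route you propose as a ``clean way around'' does not apply here: Proposition~\ref{p-ext} gives surjectivity of $S^0(X,\cdot;L)\to S^0(S,\cdot;L|_S)$, but this is vacuous unless the target coincides with (or at least controls) $H^0(S,L|_S)$, which requires an F-regularity hypothesis on $(S^\nu,\cdot)$ that is not present in the hypotheses of Proposition~\ref{p-cont}. The point of the paper's argument is that it is entirely Keel-theoretic and uses no F-singularities input. A secondary point you gloss over: since $S$ need not be normal, you must pass from semiampleness of $L|_{S^\nu}$ to semiampleness of $L|_S$; the paper does this via Lemma~\ref{c-nuuh} ($S$ normal up to universal homeomorphism) and \cite[Lemma 1.4]{Keel99}, a step that is specific to positive characteristic.
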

\begin{proof} Perturbing the coefficients, we may assume that $(X,S+B)$ is plt. We may pick an ample over $U$ $\Q$-divisor $H$ such that $L=K_X+S+B+H$ is nef over $U$,  and $L^\perp$ is spanned by $\Sigma$. Let $A$ be an ample $\Q$-divisor  such that $(S+A)\cdot \Sigma =0$, then for any $0<\epsilon\ll 1$ we may pick an ample $\Q$-divisor $H_\epsilon \sim _\Q H+ \epsilon (S+A)$ such that $L_\epsilon =K_X+S+B+H_\epsilon$ is nef over $U$,  $(L_\epsilon)^\perp=\mathbb R [\Sigma]$, and $\mathbb E(L_\epsilon)\subset S$. To verify the last inclusion, notice that if  $V\subset X$ is a subvariety not contained in $S$, then $L_\epsilon |_V=(L+\epsilon (S+A))|_V$ is nef and big over $U$, and so $ L_{\epsilon}^{\dim V}\cdot V>0$. Replacing $L$ by $L_\epsilon$, we may assume that $\mathbb E(L)\subset S$.

By adjunction, %$L|_{S^\nu}=K_{S^\nu}+B_{S^\nu}+H_{S^\nu}$ 
$(S^\nu,B_{S^\nu})$ is klt where $K_{S^\nu}+B_{S^\nu} = (K_X+S+B)|_{S^{\nu}}$, the map $S^\nu \to S$ is the normalisation, and $H_{S^{\nu}} = H|_{S^{\nu}}$. By %\cite[Theorem 1.4]{DW19} 
\cite[Theorem 1.4(1)]{hnt} and \cite[Theorem G]{BMPSTWW20}, 
 $L|_{S^\nu}$ is semiample over $U$ (note that in the mixed characteristic case, by \cite[Theorem 1.4(1)]{hnt}, we may assume that $\dim \pi(S) \geq 1$, and so \cite[Theorem G]{BMPSTWW20} applies). Since $S$ is normal up to universal morphism, 
 $L|_S$ is semiample by \cite[Lemma 1.4]{Keel99} and Theorem \ref{t-semiampless-universal-homeo}. Since $\mathbb{E}(L) \subseteq S$, \cite[Theorem 0.2]{Keel99} and \cite[Theorem 1.2]{Witaszek2020KeelsTheorem} imply that $L$ is semiample over $U$, and thus it induces a projective morphism $f\colon X\to Z$ contracting $\Sigma$ and such that $f_*\OO _X=\OO _Z$ and $\rho (X/Z)=1$.
\end{proof}
\begin{theorem}\label{t-mmp}Let $(Y,\Delta )$ be a four-dimensional $\Q$-factorial dlt pair with standard coefficients defined over %an $F$-finite field 
a perfect field
of characteristic $p> 5$ (or a DVR of characteristic $(0,p)$ for $p>5$ with perfect  residue field). Assume that there exists a projective birational morphism $\pi \colon  Y \to X$ over a normal $\Q$-factorial variety $X$ such that ${\rm Ex}(\pi ) \subset \lfloor \Delta \rfloor$.  Then $\pi$-relative divisorial and flipping contractions and flips exist for $K_Y + \Delta$, and we can run a $(K_Y + \Delta )$-mmp over $X$ which terminates with a minimal model $\phi \colon Y \dasharrow Y'$.

\end{theorem}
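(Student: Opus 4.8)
The plan is to assemble the tools developed above --- extremal contractions (Proposition~\ref{p-cont}), pl-flips whose boundary contains a relatively ample divisor (Theorem~\ref{t-sflip}), and special termination for fourfolds (Theorem~\ref{t-st4}) --- following the blueprint of \cite{HW19a} for the three-dimensional relative program. Everything is local on $X$, so I would first shrink $X$ to an affine neighbourhood of a point and, perturbing the coefficients exactly as in the proof of Theorem~\ref{t-sflip}, reduce to the case where $(Y,\Delta)$ is plt and $\lfloor\Delta\rfloor=\mathrm{Ex}(\pi)$. All the operations below preserve $\Q$-factoriality, dlt-ness and standard coefficients, and they preserve the hypothesis $\mathrm{Ex}(\pi_i)\subseteq\lfloor\Delta_i\rfloor$: for a small step both $\mathrm{Ex}(\pi_{i+1})$ and $\lfloor\Delta_{i+1}\rfloor$ are the strict transforms of $\mathrm{Ex}(\pi_i)$ and $\lfloor\Delta_i\rfloor$, while for a divisorial step the contracted divisor is a component of both. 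As in \cite{HW19a}, since $\pi$ is birational every $(K_Y+\Delta)$-negative extremal ray $R$ over $X$ is generated by $\pi$-contracted curves, hence by curves lying in $\mathrm{Ex}(\pi)=\lfloor\Delta\rfloor$; and whenever $K_Y+\Delta$ fails to be $\pi$-nef one can find such an $R$ together with a component $S$ of $\lfloor\Delta\rfloor$ with $S\cdot R<0$, by the negativity lemma applied to $K_Y+\Delta\equiv_X K_Y+\Delta-\pi^*(K_X+\pi_*\Delta)$, which is supported on $\mathrm{Ex}(\pi)$.

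The MMP is then run step by step. Given $(Y_i,\Delta_i)$ with $K_{Y_i}+\Delta_i$ not $\pi_i$-nef, pick $R$ and a component $S$ of $\lfloor\Delta_i\rfloor$ with $S\cdot R<0$ as above; by the relative cone theorem the hypotheses of Proposition~\ref{p-cont} are met, so the contraction $f\colon Y_i\to Z_i$ of $R$ exists with $\rho(Y_i/Z_i)=1$. If $f$ is divisorial, replace $(Y_i,\Delta_i)$ by $(Z_i,f_*\Delta_i)$; the relative Picard number drops by one. If $f$ is small, it is a flipping contraction, and here one uses the observation of \cite{HW19a} that in the birational setting the flipped locus is $\pi$-exceptional, which forces a component $A$ of $\lfloor\Delta_i\rfloor$ with $A\cdot R>0$; running the inversion-of-$F$-adjunction argument of Lemmas~\ref{l-2} and~\ref{l-das} exactly as in the proof of Theorem~\ref{t-sflip} then shows that $(Y_i,S+(1-\epsilon)A+B)$ is relatively purely $F$-regular over $Z_i$ for all $0<\epsilon<1$. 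Hence Theorem~\ref{t-sflip} applies, the flip $\phi_i\colon Y_i\dasharrow Y_{i+1}$ exists, and the process can be iterated.

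For termination, divisorial contractions occur only finitely often because each decreases the relative Picard number, which is bounded below. For the flips, Theorem~\ref{t-st4} shows that after finitely many steps no flip $Y_i\dasharrow Y_{i+1}$ has both its flipping locus and its flipped locus inside the non-klt locus of the respective pair. But in our situation the flipping locus is $\pi_i$-contracted, hence contained in $\mathrm{Ex}(\pi_i)\subseteq\lfloor\Delta_i\rfloor$, and likewise the flipped locus lies in $\mathrm{Ex}(\pi_{i+1})\subseteq\lfloor\Delta_{i+1}\rfloor$; both are therefore inside the non-klt locus. Thus there are only finitely many flips, and after finitely many steps $K_{Y'}+\Delta'$ becomes nef over $X$, producing the desired minimal model $\phi\colon Y\dasharrow Y'$.

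I expect the real obstacle to be the existence of the flipping flips. Divisorial contractions, the cone theorem, and termination are comparatively routine once the three-dimensional MMP is available --- this is precisely where the hypothesis $p>5$ is used, through Theorem~\ref{t-st4} and Proposition~\ref{p-cont} --- whereas flips in dimension four and positive characteristic were out of reach before this paper. They become accessible here only because the relevant flipping contractions carry a relatively ample divisor in their boundary, which is the hypothesis that unlocks the strong section-extension statement Proposition~\ref{p-1} underlying Theorem~\ref{t-sflip}.
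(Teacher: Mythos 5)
Your overall plan is the paper's own: run the $(K_Y+\Delta)$-mmp over $X$ step by step, obtain contractions from Proposition~\ref{p-cont}, obtain flips from Theorem~\ref{t-sflip} because the boundary contains both a contraction-negative component $S$ and a contraction-positive component $A$, and terminate via Theorem~\ref{t-st4}. Two of your intermediate steps, however, need repair.

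First, you invoke ``the relative cone theorem'' to verify the hypotheses of Proposition~\ref{p-cont}, but in dimension four and characteristic $p$ no such cone theorem is available off the shelf; this is in fact the only nontrivial input that is \emph{not} already contained in the cited results and it cannot be waved through. The paper establishes it directly in the proof: since $X$ is $\Q$-factorial and $\pi_k$ birational, one finds an effective $\pi_k$-exceptional divisor $E$ with $-E$ $\pi_k$-ample (take $E=\pi_k^*\pi_{k*}H-H$ for $H$ ample on $Y_k$ and apply the negativity lemma), hence every $\pi_k$-contracted curve lies in some $S_i\subset\mathrm{Supp}(E)\subset\lfloor\Delta_k\rfloor$. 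This gives a surjection $\bigoplus_i \mathrm{NE}(S_i^\nu/X)\twoheadrightarrow\mathrm{NE}(Y_k/X)$, and the threefold cone theorem of \cite{DW19} applied on each normalisation $S_i^\nu$ (using Lemma~\ref{l-dltad} to compute adjunction) yields the countable non-accumulating collection of negative rays that Proposition~\ref{p-cont} requires. Without this reduction-to-threefolds step your proof does not close.

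Second, your mechanism for producing $S$ with $S\cdot R<0$ is incorrect. The divisor $K_Y+\Delta-\pi^*(K_X+\pi_*\Delta)$ is indeed $\pi$-exceptional, but it is in general \emph{not} effective (nothing in the hypotheses forces $(X,\pi_*\Delta)$ to be lc, and even then the multiplicities can be negative on components of $\lfloor\Delta\rfloor$), so the negativity lemma gives no information about its sign, and from $F\cdot R<0$ with $F$ of mixed sign you cannot extract a component $S$ with $S\cdot R<0$. The correct source of $S$ is again the effective $\pi_k$-exceptional divisor $E$ with $-E$ $\pi_k$-ample described above: any extremal curve $C$ is $\pi_k$-contracted, so $E\cdot C<0$, hence some irreducible component $S$ of $E\subset\lfloor\Delta_k\rfloor$ satisfies $S\cdot C<0$. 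The component $A$ with $A\cdot R>0$ you correctly take from \cite[Lemma~3.1]{HW19a}, as the paper does.

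A minor point: the opening reduction ``to the case where $(Y,\Delta)$ is plt and $\lfloor\Delta\rfloor=\mathrm{Ex}(\pi)$'' cannot literally be performed --- if $\mathrm{Ex}(\pi)$ has more than one component the pair cannot be plt --- and in any case it is unnecessary; the paper keeps the dlt pair intact and only uses the hypothesis $\mathrm{Ex}(\pi)\subset\lfloor\Delta\rfloor$ together with the preservation of this inclusion along the mmp, which you argue correctly. The termination argument via Theorem~\ref{t-st4} is exactly the paper's.
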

\begin{proof} If $K_Y+\Delta$ is $\pi$-nef, then we are done. Otherwise we run a $(K_Y+\Delta)$-mmp over $X$. Let $\phi _k \colon Y=Y_0\dasharrow Y_1 \dasharrow \ldots \dasharrow Y_k$ be a sequence of flips and contractions and let $\Delta_k$ be the strict transforms of $\Delta$ on $Y_k$.
 Since $X$ is $\Q$-factorial, there exists an effective exceptional divisor $E$ on $Y_k$ such that $-E$ is ample over $X$. In particular $\lfloor \Delta _k\rfloor$ contains all $\pi_k\colon Y_k\to X$ exceptional curves.  We must show that we can continue the mmp. The termination will then follow by Theorem \ref{t-st4} as both the flipping and flipped locus must be contained in $\mathrm{Ex}(\pi)\subseteq \lfloor \Delta \rfloor$. Note that if $K_{Y_k}+\Delta_k $ is nef over $X$, then $Y_k$ is the required minimal model. Therefore, we may assume that $K_{Y_k}+\Delta_k $ is not  nef over $X$.

We start by establishing the cone theorem.
Let $S_i$ be all the $\pi_k$-exceptional divisors for $1 \leq i \leq r$. %Let $S_k$ be the exceptional irreducible divisors for $1 \leq k \leq r$.
Restricting to the normalisation $S_i^\nu$ of $S_i$, we get that $(S_i^{\nu}, \Delta_{S_i^{\nu}})$ is dlt, where $K_{S_i^\nu}+\Delta _{S_i^\nu}=(K_{Y_k}+\Delta_k)| _{S_i^\nu}$. %Let $G_{S_i^{\nu}} = G|_{S_i^{\nu}}$. %is nef, dlt, and $\Delta' _{S_i^\nu}$ is big.
The three-dimensional cone theorem (\cite[Theorem 1.1]{DW19} and \cite[Theorem H]{BMPSTWW20}) states that
\[ \mathrm{NE}(S_i^\nu/X)=\mathrm{NE}(S_i^\nu/X)_{K_{S_i^\nu}+\Delta_{S_i^\nu}  \geq 0}+\sum _{j\geq 1}\mathbb R _{\geq 0}[\overline{\Gamma}_{i,j}],\]
where $0 > (K_{S_i^\nu}+\Delta_{S_i^\nu})\cdot \overline{\Gamma}_{i,j}= (K_{Y_k}+\Delta_k)\cdot \Gamma_{i,j}.$ Here $\{\overline{\Gamma}_{i,j}\}_{j\geq 1}$ is a countable collection of curves on $S_i^\nu$ and $\Gamma _{i,j}$ denote their images on $S_i \subseteq Y_k$.
Since ${\rm Ex }(\pi_k ) \subset {\rm Supp }(E)$,
there is a surjection $\sum_{i=1}^r \mathrm{NE}(S_i^\nu/X)\to \mathrm{NE}(Y_k/X)$
and hence 
\[
\mathrm{NE}(Y_k/X)=\mathrm{NE}(Y_k/X)_{K_{Y_k}+\Delta_k \geq 0}+\sum _{1 \leq i \leq r, 1 \leq j}\mathbb R _{\geq 0}[\Gamma_{i,j}].
\]
Note that for any ample $\Q$-divisor $H_k$ on $Y_k$, the set $\{ \Gamma _{i,j} \mid (K_{Y_k}+\Delta _k+H_k)\cdot \Gamma _{i,j}\leq 0\}$ is finite, and so extremal rays do not accumulate in $\mathrm{NE}(Y_k/X)_{K_{Y_k}+\Delta_k < 0}$.

Now, pick an extremal curve $C = \Gamma_{i,j}$ for some $i,j$. Then, there is an effective $\pi_k$-exceptional divisor $S$ such that $S \cdot C < 0$, and hence a contraction $f \colon Y_k \to Z$ of $C$ exists by Proposition \ref{p-cont} applied to $(Y_k,\Delta_k)$.
\begin{comment}Since $(K_{Y_k}+\Delta_k) \cdot C < 0$ and $C$ is an extremal curve, we may find an ample $\Q$-divisor $A$ such that $(K_{Y_k}+\Delta_k+A) \cdot C = 0$. Pick an effective $\pi_k$-exceptional divisor $S$ such that $S \cdot C < 0$. By the three-dimensional base point free theorem \cite[Theorem 2.9]{GNT16} (up to perturbation)  we have that 
\[
(K_{Y_k}+\Delta_k+A)|_{S^\nu} = K_{S^{\nu}}+\Delta_{S^{\nu}} + A|_{S^{\nu}}
\]
is semiample. Since $S$ is normal up to universal homeomorphism, by \cite[Theorem 0.2 and Lemma 1.4]{Keel99}, $K_{Y_k}+\Delta_k+A$ is semiample. It follows that the induced morphism $f\colon Y_k\to Z$ is the contraction of $C$.
\end{comment}
%\end{proof}
If $f$ is a divisorial contraction, we let $Y_{k+1}=Z$. 
If $f$ is a flipping contraction, then by \cite[Lemma 3.1]{HW19a} (this lemma is stated over a field but it holds in much wider generality such as over DVRs) there exists an $f$-ample irreducible divisor $A$ which is $\pi$-exceptional and hence contained in $\lfloor \Delta_k \rfloor$.
By Theorem \ref{t-sflip}, the flip $f^+\colon Y^+\to Z$ exists. We let $Y_{k+1}=Y^+$, $\psi \colon Y_k\dasharrow Y_{k+1}$, and $K_{Y_{k+1}}+\Delta _{k+1}=\psi _*(K_{Y_{k}}+\Delta _{k})$. \qedhere

\end{proof}

\begin{proof}[Proof of Theorem \ref{t-mmp+}] 
%Since ${\rm Exc}(\pi ) $ supports an effective $\pi$ exceptional and anti-ample divisor, it follows that all flips intersect $\lfloor \Delta \rfloor$ and therefore termination of flips follows from termination in dimension $3$ and the usual arguments of special termination. 
Note that the proof of Theorem \ref{t-mmp} implies the existence of all relevant divisorial and flipping contractions and the termination of the relevant flips. It suffices therefore to show the existence of flips. 
Suppose that $f\colon Y\to Z$ is a flipping contraction over $X$. We must show that the corresponding flip $Y^+\to Z$ exists.
%We may replace any non exceptional (over $X$) component $P$ of  $\lfloor \Delta ^+\rfloor$ by $(1-1/m)P$ for $m\gg 0$, and so we may assume that $\lfloor \Delta ^+\rfloor={\rm Ex}(Y/X)$. If $P$ is a component in $\Delta$ whose coefficient is  not contained in the standard set $\{ 1-\frac 1 m|m\in \mathbb N \}\cup \{ 1\}$ and $-P$ is not ample over $Z$ then we may simply delete $P$ from $\Delta$ and therefore we may assume that any such component is anti-ample over $Z$. We follow the proof of \cite[Theorem]{Bir16} see also \cite[\S 7.1]{HW19b}.
Let $\zeta (\Delta )$ be the number of components of $\Delta$ whose coefficient is not contained in the standard set $\{ 1-\frac 1 m \mid m\in \mathbb N \}\cup \{ 1\}$. We will prove the result by induction on $\zeta (\Delta )$. %Note that there is a $X$ exceptional divisor, say $S$ such that $-S$ is ample over $Z$ and hence $K_Y+\Delta _Y\sim _{\Q,Z}\lambda S$ for some $\lambda >0$.

%Let $\nu \colon Y'\to Y$ be a log resolution and let $\Delta'=\nu ^{-1}_*\Delta +{\rm Ex}(\nu)$. We may assume that $\nu$ is an isomorphism at the generic points of every strata of $\lfloor \Delta \rfloor$ and hence that $K_{Y'}+\Delta'=\nu^*(K_Y+\Delta)+F$ for an effective $\Q$-divisor $F$ such that $\Supp\, F = {\rm Ex}(Y'/Y)$.

 %such that the components of $ \lfloor \Delta' \rfloor$ are normal up to universal homeomorphism. We have $K_{Y'}+\Delta'\sim _{\Q,Z}\lambda S +F$ for some effective exceptional divisor $F$. But then $K_{Y''}+\Delta''\sim _{\Q,Z}\lambda S''+F''$ is nef over $Z$. Let $C$ be any $Z$ exceptional curve such that $(K_{Y''}+\Delta'')\cdot C=0$. Since $\lfloor \Delta''\rfloor={\rm Ex}(Y''/X)$,  then $C$ is contained in the support of $\lfloor \Delta''\rfloor$ and intersects some component negatively.   We claim that $(K_{Y''}+\Delta'')|_{\Delta ^\nu}$ is semiample where $\Delta ^\nu$ is the normalisation of any component of  the support of $\lfloor \Delta''\rfloor$ and hence $ K_{Y''}+\Delta''$ is semiample.\footnote{Is this true? How do we prove it? I have 2 problems: 1) $\lfloor \Delta''\rfloor$ has more than 1 component; 2) Do we know a base point free theorem in this generality for 3-folds? This could be avoided if we could choose $(Y,\Delta )$ klt.} over $Z$. Let $Y''\to Y^+$ be the corresponding morphism, then $Y^+\to Z$ is the required flip. 

Note that if $\zeta (\Delta )=0$, then  by Theorem \ref{t-mmp} the required flip exists.
Therefore, we assume that $\zeta (\Delta )>0$, and so we may write $\Delta =aS+B$ where $a\not \in \{ 1-\frac 1 m \mid m\in \mathbb N \}\cup \{ 1\}$. Note that $S$ is not exceptional over $X$ (as the exceptional divisors occur with coefficient one). %As observed above, we may assume that $-S$ is ample over $Z$ and hence $K_Y+\Delta _Y\sim _{\Q,Z}\lambda S$ for some $\lambda >0$.
Let $\nu \colon Y'\to Y$ be a log resolution which is an isomorphism at the generic points of strata of $\lfloor \Delta \rfloor$ and let $B'=\nu ^{-1}_*B+{\rm Ex}(\nu)$, $S'=\nu ^{-1}_* S$.
Since $\zeta (S'+B')<\zeta (\Delta)$, we may run the $(K_{Y'}+S'+B')$-mmp over $Z$. We obtain a birational contraction $\phi\colon Y'\dasharrow Y''$ such that $K_{Y''}+S''+B''=\phi _*(K_{Y'}+S'+B')$ is nef over $Z$ and all components of $\lfloor S''+B''\rfloor$ are normal up to universal homeomorphism. We now run a $(K_{Y''}+aS''+B'')$-mmp over $Z$ with scaling of $(1-a)S''$. Note that
this is also a $(K_{Y''}+B'')$-mmp and $\zeta (B'')<\zeta (\Delta )$. Therefore the required minimal model $Y''\dasharrow Y^+$ exists. Since the log resolution was an isomorphism at the generic points of strata of $\lfloor \Delta \rfloor$, it is easy to see that this is the required flip (cf.\ \cite[Section 7.1]{HW19b}).
\end{proof}

\subsection{Applications}

We begin by proving that   fourfold $\Q$-factorial klt singularities in positive characteristic are $W\OO$-rational.
\begin{corollary} \label{c-witt-rational} Let $X$ be a $\Q$-factorial klt four-dimensional variety over a perfect field $k$ of characteristic $p>5$ admitting a log resolution of singularities. Then $X$ has $W\mathcal O$-rational singularities. 
\end{corollary}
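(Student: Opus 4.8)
The strategy is the same as in the threefold case treated in \cite[Subsection 2.3]{HW19a}: reduce the statement to the fact that a minimal model program can be run and terminates, and then invoke the characterisation of $W\mathcal{O}$-rational singularities in terms of a single quasi-resolution. First I would pass to a log resolution $f \colon Y \to X$; since $X$ admits one by hypothesis, it suffices (by \cite[Corollary 4.5.1]{CR12} and the remarks following the definition of $W\mathcal{O}$-rationality in the Preliminaries) to verify $R^i f_* W\mathcal{O}_{Y,\Q} = 0$ for $i>0$ on this particular resolution. Writing $K_Y + E = f^*K_X + F$ with $E, F \geq 0$ effective and $f$-exceptional with no common components, the pair $(Y, E)$ is $\Q$-factorial dlt (after further blow-ups, snc), and $\operatorname{Ex}(f) \subseteq \lfloor E \rfloor$ by construction. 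Hence Theorem~\ref{t-mmp+} (equivalently Theorem~\ref{t-mmp}) applies: we may run a $(K_Y+E)$-mmp over $X$, which terminates with a minimal model $g \colon Y' \to X$ on which $K_{Y'}+E'$ is $g$-nef, hence (since $K_{Y'}+E' \sim_{\Q,X} F' \geq 0$ and $-F'$ would be $g$-nef for the negativity-type argument) $g$ is in fact a small birational morphism and $Y'$ is a terminalisation, or more precisely $K_{Y'}+E' \sim_{\Q, X} 0$ with $E'$ still $g$-exceptional, forcing $E'=0$ and $Y'$ to have terminal singularities.

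\textbf{Key steps.} (1) Produce the dlt pair $(Y,E)$ from a log resolution and check $\operatorname{Ex}(f)\subseteq\lfloor E\rfloor$. (2) Run the $(K_Y+E)$-mmp over $X$ via Theorem~\ref{t-mmp+}; record that each step is either a divisorial contraction or a flip, both of which are known (on $W\mathcal{O}$) to preserve the vanishing of higher direct images of $W\mathcal{O}_{-,\Q}$ — this is the Witt-vector analogue of the fact that klt/terminal singularities of the intermediate models are again rational, and it is exactly the step that in \cite{HW19a} uses the results of \cite{GNT16} on the behaviour of $W\mathcal{O}$ under the mmp (Grauert--Riemenschneider-type vanishing and descent along divisorial contractions and flips). (3) At the end we reach a model $Y'$ with $K_{Y'}$ relatively nef over $X$; the negativity lemma shows $Y' \to X$ is crepant and small, and terminality of $Y'$ gives $R^i(Y' \to X)_* W\mathcal{O}_{Y',\Q}=0$ for $i>0$ directly (a smooth or terminal model computes the Witt cohomology and the higher direct images vanish by \cite{GNT16}). (4) Propagate the vanishing back along the mmp from $Y'$ to $Y$, using that at each elementary step the relevant $R^i$ vanishes; then the composite $Y\to X$ also has vanishing higher direct images, which is the definition of $W\mathcal{O}$-rationality.

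\textbf{Main obstacle.} The crux is Step (2)--(4): showing that the vanishing $R^i(-)_*W\mathcal{O}_{-,\Q}=0$ is inherited through each divisorial contraction and each flip of the $(K_Y+E)$-mmp, and that one can glue these to compare $Y$ and the terminal model $Y'$. In characteristic zero this is the standard argument that rational singularities are a birational invariant of terminal models together with the fact that the mmp preserves rationality; in our setting one must replace coherent cohomology and Grauert--Riemenschneider vanishing by their Witt-vector counterparts from \cite{GNT16}, and one must be careful that all intermediate varieties are $\Q$-factorial and that the contractions involved are of the type for which \cite{GNT16} proves the required vanishing and descent. Everything else — producing the dlt pair, invoking Theorem~\ref{t-mmp+} for termination, and the negativity-lemma argument identifying the output with a terminalisation — is routine. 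I expect the write-up to consist mostly of citing \cite{HW19a} and \cite{GNT16} and checking that the fourfold mmp established above feeds into their framework without change.
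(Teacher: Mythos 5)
Your overall plan is the same as the paper's: pass to a log resolution, run an mmp over $X$ whose output is $X$ itself, and show that each step of the mmp preserves the vanishing of $R^i(-)_*W\mathcal{O}_{-,\Q}$. However, there are two genuine problems.

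First, the boundary you propose does not satisfy the hypotheses of Theorem~\ref{t-mmp+}. You define $E$ via the discrepancy decomposition $K_Y+E=f^*K_X+F$. Since $X$ is klt, every exceptional discrepancy is $>-1$, so every coefficient of $E$ is $<1$, hence $\lfloor E\rfloor=0$; the claim "$\operatorname{Ex}(f)\subseteq\lfloor E\rfloor$ by construction'' is therefore false (unless $f$ is an isomorphism). Theorem~\ref{t-mmp+} needs $\operatorname{Ex}(\pi)\subseteq\lfloor\Delta\rfloor$, so you must take $E=\operatorname{Ex}(f)$, the \emph{reduced} exceptional divisor, as the paper does. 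Relatedly, the output of the $(K_Y+\operatorname{Ex}(f))$-mmp over $X$ is not a terminalisation: since $K_Y+\operatorname{Ex}(f)\equiv_X\sum(1+a_E(X,0))E$ with all coefficients $>0$, the negativity lemma forces the mmp to contract all of $\operatorname{Ex}(f)$, so the output is small over $X$ and (by $\Q$-factoriality of $X$) is an isomorphism. Your description of the endpoint as a crepant terminalisation with $K_{Y'}$ nef over $X$ conflates the $(K_Y+\operatorname{Ex}(f))$-mmp with the $K_Y$-mmp; they have different outputs and only the former satisfies the hypotheses of Theorem~\ref{t-mmp+}.

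Second, the step you correctly identify as the "crux'' — that each elementary contraction and flip preserves $R^i(-)_*W\mathcal{O}_{-,\Q}$ — cannot simply be cited from \cite{GNT16} or \cite{HW19a}; it is precisely the content of the paper's proof and requires the new fourfold inputs established earlier in the paper. Concretely: for a divisorial contraction $f\colon Y\to Z$ contracting a prime divisor $S$, one must (i) restrict to a normalisation $S^{\nu}$ with $-(K_{S^{\nu}}+\Delta_{S^{\nu}})$ ample, invoke the threefold Witt--Nadel vanishing of \cite{NT20}, then descend via the universal homeomorphism $S^{\nu}\to S$ (Lemma~\ref{c-nuuh}), and (ii) pass through the short exact sequence $0\to WI_{S,\Q}\to W\mathcal{O}_{Y,\Q}\to u_*W\mathcal{O}_{S,\Q}\to 0$ and the vanishing $R^if_*WI_{S,\Q}=0$ of \cite{GNT16}. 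For a flip $\phi\colon Y\dashrightarrow Y^+$ over $Z$, the argument relies on the fact (established here via Theorem~\ref{t-sflip} and Lemma~3.1 of \cite{HW19a}) that \emph{both} $Y\to Z$ and $Y^+\to Z$ are relatively $F$-regular, giving $Rf_*\mathcal{O}_Y=\mathcal{O}_Z=Rf^+_*\mathcal{O}_{Y^+}$, and then translating this to $W\mathcal{O}$. None of this is available by citation alone; you need the $F$-regularity of fourfold flips proved in this paper. Your summary of the obstacle is accurate, but the proposal does not actually close it.
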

\begin{proof} 
Let $\pi \colon Y \to X$ be a log resolution and run the $(K_Y+{\rm Ex}(\pi))$-mmp over $X$. By standard arguments, this mmp contracts ${\rm Ex}(\pi)$ and so its output $Y'\to X$ is a small birational morphism and hence an isomorphism as $X$ is $\Q$-factorial. We shall show that if $g \colon Y \dashrightarrow Y_n$ is a sequence of steps of the mmp, with the induced map $\pi_n \colon Y_n \to X$, then $R^i\pi_*W\OO_{Y,\Q} = R^i(\pi_n)_*W\OO_{Y_n,\Q}$. As a consequence $R^i\pi_*W\OO_{Y,\Q} = 0$ for $i>0$, and so $X$ has $W\OO$-rational singularities. Thus, inductively, after replacing $Y$ by some steps of the mmp, we may assume that $(Y, {\rm Ex}(f))$ is dlt. Let $f \colon Y \to Z$ be a Mori contraction over $X$ with $\rho(Y/Z)=1$. 

Assume that $f$ is divisorial. There exists an $f$-anti-ample irreducible divisor $S \subseteq {\rm Ex}(f)$ with the induced map $f_S \colon S \to X$. Set $K_{S^{\nu}}+\Delta_{S^{\nu}} = (K_Y + {\rm Ex}(f))|_{S^{\nu}}$, where $S^{\nu}$ is the normalisation of $S$ with the induced map $f_{S^{\nu}} \colon S^{\nu} \to X$. Then $(S^{\nu},\Delta_{S^{\nu}})$ is dlt by Lemma \ref{l-dltad} and $-(K_{S^{\nu}}+\Delta_{S^{\nu}})$ is $f_{S^{\nu}}$-ample. Up to perturbation, we can assume that $(S^{\nu}, \Delta_{S^{\nu}})$ is klt, and so, by \cite[Theorem 3.11]{NT20}, 
\[
R^i(f_{S^{\nu}})_*W\OO_{S^{\nu},\Q}=0
\]
for $i>0$. Since $S$ is normal up to universal homeomorphism (Lemma \ref{c-nuuh}), \cite[Lemma 2.17]{NT20} and \cite[Lemma 2.20 and 2.21]{GNT16} together with the Leray spectral sequence imply that $R^i(f_{S})_*W\OO_{S,\Q}=0$ for $i>0$. Let $u \colon S \to Y$ be the inclusion and let $I_S$ be the ideal sheaf defining $S$. Then, we get the following short exact sequence (cf.\ \cite[Proof of Proposition 3.4]{GNT16})
\[
0 \to WI_{S,\Q} \to W\OO_{Y,\Q} \to u_*W\OO_{S,\Q} \to 0.
\]
By \cite[Proposition 2.23]{GNT16}, $R^if_* WI_{S,\Q}=0$ for $i>0$, and so 
\[
R^if_*W\OO_{Y,\Q}=R^if_*(u_*W\OO_{S,\Q}) = R^i(f_S)_*W\OO_{S,\Q} = 0,
\]
where the second equality follows from \cite[Lemma 2.20 and Lemma 2.21]{GNT16}. Moreover, $f_*W\OO_{Y,\Q} = W\OO_{Z,\Q}$ by \cite[Lemma 2.17]{NT20}, and so we can conclude the proof when $f$ is divisorial by the Leray spectral sequence.

Suppose that $f$ is a flipping contraction and let $\pi_Z \colon Z \to X$ be the induced map. Let $\phi \colon Y \dashrightarrow Y^+$ be the flip, let $f^+ \colon Y^+ \to Z$ be the flipped contraction, and let $\pi^+ \colon Y^+ \to X$ be the induced map. As before, by \cite[Lemma 3.1]{HW19a} and Theorem \ref{t-sflip}, both $Y$ and $Y^+$ are relatively F-regular over $Z$. In particular, $Rf_* \OO_Y = \OO_Z$ and $Rf^+_* \OO_{Y^+} = \OO_Z$ (see \cite[Theorem 6.8]{schwedesmith10} and the proof of \cite[Proposition 3.4]{HW19b}). By \cite[Lemma 2.17]{NT20} and \cite[Lemma 2.19 and Lemma 2.21]{GNT16},  $f_* W\OO_{Y,\Q} = f^+_* W\OO_{Y^+,\Q} = W\OO_{Z,\Q}$ and $R^if_* W\OO_{Y,\Q} = R^if^+_* W\OO_{Y^+,\Q} = 0$ for $i>0$. By the Leray spectral sequence:
\[
R^i\pi_*W\OO_{Y,\Q} = R^i(\pi_Z)_*W\OO_{Z,\Q} = R^i\pi_*^+W\OO_{Y^+,\Q}. \qedhere 
\]

 %In particular, $Y^+$ has $W\OO$-rational singularities in the neigbourhood of the flipped locus, and since $\phi \colon Y \to Y^+$ is an isomorphism outside of the flipping locus, $Y^+$ has $W\OO$-rational singularities.

%We may assume that $(X,\Delta )$ is $\Q$-factorial. Let $f\colon Y\to X$ be a log resolution and run the $K_Y+f^{-1}_*\Delta +{\rm Ex}(f)$ mmp over $X$. By standard arguments, this mmp contracts ${\rm Ex}(f)$ and so its output $Y'\to X$ is a small birational morphism and hence an isomorhism as $X$ is $\Q$-factorial.
%Therefore it suffices to show that each step of this mmp preserves $W\mathcal O$ rational singularities. If $X_i\to X_{i+1}$ is a divisorial contraction, then .....

%If $X_i\dasharrow X_{i+1}$ is a flip, then we hope to use global F-regularity.
\end{proof}

Next, we show the existence of dlt modifications.
\begin{corollary}\label{c-dlt-mod} Let $(X,\Delta)$ be a four-dimensional $\Q$-factorial log pair with standard coefficients admitting a log resolution and defined over a perfect field of characteristic $p>5$ or a DVR of characteristic $(0,p)$ for $p>5$ with perfect residue field. Then a dlt modification of $(X,\Delta)$ exists, that is, a birational morphism $\pi \colon Y \to X$ such that $(Y, \pi^{-1}_*\Delta + {\rm Ex}(\pi))$ is dlt, $\Q$-factorial, and minimal over $X$.
\end{corollary}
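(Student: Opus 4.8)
The strategy is the familiar one: resolve, run a relative MMP to contract the unwanted exceptional divisors, and invoke Theorem~\ref{t-mmp} to guarantee that this MMP exists and terminates. Concretely, I would first pick a log resolution $f\colon Y\to X$ of $(X,\Delta)$, which exists by assumption, and put $\Gamma:=f^{-1}_*\Delta+{\rm Ex}(f)$. Since $f$ is a log resolution, $(Y,\Gamma)$ is log smooth, hence $\Q$-factorial and dlt; as $\Delta$ has standard coefficients and the $f$-exceptional divisors occur in $\Gamma$ with coefficient $1$ (and $f^{-1}_*\Delta$ shares no component with ${\rm Ex}(f)$), the boundary $\Gamma$ has standard coefficients; and by construction ${\rm Ex}(f)=\lfloor{\rm Ex}(f)\rfloor\subseteq\lfloor\Gamma\rfloor$.

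Next, I would apply Theorem~\ref{t-mmp} to $(Y,\Gamma)$ and the projective birational morphism $f\colon Y\to X$ over the normal $\Q$-factorial variety $X$. This yields a $(K_Y+\Gamma)$-MMP over $X$ terminating with a minimal model $\phi\colon Y\dashrightarrow Y'$; writing $\pi'\colon Y'\to X$ for the induced projective birational morphism and $\Gamma':=\phi_*\Gamma$, the pair $(Y',\Gamma')$ is $\Q$-factorial and dlt with $K_{Y'}+\Gamma'$ nef over $X$, i.e.\ minimal over $X$. Each step of this MMP is a divisorial contraction or a flip, so $\phi$ is a birational contraction extracting no divisor, and every prime divisor on $Y'$ is the strict transform of a unique prime divisor on $Y$.

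It then remains to identify the boundary. The components of $\Gamma'=\phi_*\Gamma$ are precisely the strict transforms on $Y'$ of those components of $\Gamma$ not contracted by $\phi$. Among these, the strict transform $D'$ of a prime divisor $D$ on $Y$ has $\overline{\pi'(D')}=\overline{f(D)}$ (as $\phi$ is birational over $X$, the two rational maps agree on the generic point of $D$), so $D'$ is $\pi'$-exceptional if and only if $f(D)$ has codimension $\geq 2$ in $X$, i.e.\ if and only if $D$ is $f$-exceptional. Hence ${\rm Ex}(\pi')$ consists exactly of the surviving strict transforms of ${\rm Ex}(f)$, whereas $(\pi')^{-1}_*\Delta$ consists of the surviving strict transforms of $f^{-1}_*\Delta$; adding these gives $\Gamma'=(\pi')^{-1}_*\Delta+{\rm Ex}(\pi')$. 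Thus $\pi'\colon Y'\to X$ is a dlt modification of $(X,\Delta)$.

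I expect all the genuine difficulty to have been pushed into Theorem~\ref{t-mmp} (which in turn relies on the existence of pl-flips from Theorem~\ref{t-sflip} and on special termination, Theorem~\ref{t-st4}); the only work specific to this corollary is assembling $(Y,\Gamma)$ and the divisor bookkeeping above, both routine. The one subtlety worth flagging is conceptual rather than technical: since $(X,\Delta)$ need not be log canonical, one does not claim that $K_{Y'}+\Gamma'$ is crepant over $X$, only nef over $X$; correspondingly, there is no requirement here that every exceptional valuation of log discrepancy $\leq 0$ be extracted, and indeed the MMP may well contract some such divisors.
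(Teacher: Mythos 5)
Your proposal is correct and follows the same strategy as the paper's (terse) proof: take a log resolution $f\colon Y\to X$, set $\Gamma=f^{-1}_*\Delta+{\rm Ex}(f)$, and apply the relative MMP over the $\Q$-factorial base (Theorem~\ref{t-mmp}/\ref{t-mmp+}) to obtain the minimal model, which is the dlt modification. The paper states this in one line; your bookkeeping of which divisors survive and why the resulting boundary equals $(\pi')^{-1}_*\Delta+{\rm Ex}(\pi')$, together with the remark that crepancy over $X$ is not being claimed, just makes explicit what is left implicit there.
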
 
\begin{proof}
Let $\pi \colon Y \to X$ be a log resolution of $(X, \Delta)$. Then, a dlt modification is a minimal model of $(Y,\pi^{-1}_*\Delta + {\rm Ex}(\pi))$ over $X$ (see Theorem \ref{t-mmp+}).
\end{proof}

Finally, we prove that inversion of adjunction holds.
\begin{corollary}\label{c-inversion-of-adjunction} Consider a $\Q$-factorial four-dimensional log pair $(X,S+B)$ with standard coefficients defined over a perfect field of characteristic $p>5$ or a DVR of characteristic $(0,p)$ for $p>5$ with perfect residue field, where $S$ is a prime divisor with no common component with $B\geq 0$. Assume that $(X,S+B)$ admits a log resolution. 

Then $(X,S+B)$ is plt on a neighbourhood of $S$ if and only if $(\tilde S,B_{\tilde S})$ is klt, where $\tilde S$ is the normalisation of $S$ and $B_{\tilde S}$ is the different.
\end{corollary}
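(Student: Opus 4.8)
The plan is to prove the two implications separately; the direction ``plt $\Rightarrow$ klt'' is a short application of adjunction, while the converse carries the weight. Two harmless reductions come first: the assertion is local around $S$, so after shrinking $X$ I may assume that every component of $B$ meets $S$, and if some such component has coefficient one then $\lfloor S+B\rfloor$ is reducible (so $(X,S+B)$ is not plt) while the different $B_{\tilde S}$ picks up a component of coefficient $\geq 1$ (so $(\tilde S,B_{\tilde S})$ is not klt), whence both conditions fail and there is nothing to prove; so I may assume $\lfloor S+B\rfloor=S$. For the forward direction, if $(X,S+B)$ is plt near $S$ then it is in particular $\Q$-factorial and dlt there, so Lemma~\ref{l-dltad} applied with $\Delta=S+B$ shows that $(\tilde S,B_{\tilde S})$ is dlt and that the strata of $\lfloor B_{\tilde S}\rfloor$ are in bijection with the strata of $\lfloor S+B\rfloor$ contained in $S$; since $S$ itself is the only such stratum, $\lfloor B_{\tilde S}\rfloor=0$ and $(\tilde S,B_{\tilde S})$ is klt.

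For the converse I would pass to a dlt modification $\pi\colon Y\to X$ of $(X,S+B)$, which exists by Corollary~\ref{c-dlt-mod}: thus $(Y,\Delta_Y)$ with $\Delta_Y=\pi^{-1}_*(S+B)+{\rm Ex}(\pi)$ is $\Q$-factorial dlt and $K_Y+\Delta_Y$ is $\pi$-nef. Set $G:=K_Y+\Delta_Y-\pi^*(K_X+S+B)$. Since $S$ is a divisor on $X$ its strict transform $S_Y$ is not $\pi$-exceptional, so $G$ is supported on ${\rm Ex}(\pi)$; being $\pi$-nef and $\pi$-exceptional, the negativity lemma gives $G\leq 0$. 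Restricting to the normalisation $\tilde S_Y$ of $S_Y$, Lemma~\ref{l-dltad} shows $(\tilde S_Y,\Delta_{\tilde S_Y})$ is dlt with the analogous strata bijection, and compatibility of restriction with the pullback along $\pi$ gives $K_{\tilde S_Y}+\Delta_{\tilde S_Y}=\pi_S^*(K_{\tilde S}+B_{\tilde S})+G|_{\tilde S_Y}$ along the induced birational morphism $\pi_S\colon\tilde S_Y\to\tilde S$, with $G|_{\tilde S_Y}\leq 0$.

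Now the hypothesis that $(\tilde S,B_{\tilde S})$ is klt forces every coefficient of the log pullback $\Delta_{\tilde S_Y}-G|_{\tilde S_Y}$ to be $<1$, hence every coefficient of $\Delta_{\tilde S_Y}$ is $<1$ too because $-G|_{\tilde S_Y}\geq 0$; so $\lfloor\Delta_{\tilde S_Y}\rfloor=0$ and, by the strata bijection, no component of $\lfloor\Delta_Y\rfloor$ other than $S_Y$ meets $S_Y$, i.e.\ ${\rm Ex}(\pi)\cap S_Y=\emptyset$. To conclude, note that since $X$ is normal $\pi_*\OO_Y=\OO_X$ and the fibres of $\pi$ are connected; if some point $x\in S$ lay in $\pi({\rm Ex}(\pi))$, then $\pi^{-1}(x)$ would meet ${\rm Ex}(\pi)$, and since $\pi$ is a local isomorphism off ${\rm Ex}(\pi)$ connectedness would force $\pi^{-1}(x)\subseteq{\rm Ex}(\pi)$; but $S_Y\to S$ is surjective, so $\pi^{-1}(x)$ also meets $S_Y\cap{\rm Ex}(\pi)=\emptyset$, a contradiction. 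Hence $U:=X\setminus\pi({\rm Ex}(\pi))$ is an open neighbourhood of $S$ over which $\pi$ is an isomorphism and $G$ vanishes, so $(U,(S+B)|_U)\cong(\pi^{-1}(U),\Delta_Y|_{\pi^{-1}(U)})$ is dlt with irreducible reduced boundary over $U$, hence plt (a dlt pair with irreducible reduced boundary has all exceptional log discrepancies positive).

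The step I expect to be the main obstacle is exactly this last upgrade: ${\rm Ex}(\pi)\cap S_Y=\emptyset$ is a statement on $Y$ about divisors meeting $S_Y$, whereas plt-ness of $(X,S+B)$ concerns all valuations over $X$ centred along $S$, and bridging this in characteristic zero is where one invokes a Koll\'ar--Shokurov connectedness theorem, which in characteristic $p$ would need a vanishing input we do not have. The reason routing everything through the dlt modification succeeds is that the disjointness on $Y$ together with the purely topological connectedness of the fibres of $\pi$ already forces $\pi$ to be an isomorphism near $S$, after which nothing remains. A subsidiary point needing care is the compatibility of adjunction with pullback along $\pi$ when $S$ and $S_Y$ may only be normal up to universal homeomorphism, which I would handle by tracking the chain of normalisations $\tilde S_Y\to S_Y\to S\leftarrow\tilde S$ together with Lemma~\ref{l-dltad}.
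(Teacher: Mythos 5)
Your proof is correct and follows essentially the same route as the paper: pass to a dlt modification $\pi\colon Y\to X$, restrict to the strict transform $S_Y$ and compare coefficients against the log pullback of the klt pair $(\tilde S,B_{\tilde S})$ to conclude that no $\pi$-exceptional divisor meets $S_Y$, hence $\pi$ is an isomorphism near $S$ and the pair is plt there. The differences are only presentational (you carry the discrepancy divisor $G=K_Y+\Delta_Y-\pi^*(K_X+S+B)$ and the fibre-connectedness step explicitly, while the paper folds the former into the observation $\mathrm{Ex}(\pi)\subseteq\lfloor B_Y\rfloor$ for the crepant-pullback boundary and the latter into the phrase ``$Y=X$ near $S$''), and the worry in your final paragraph about needing Koll\'ar--Shokurov connectedness is unfounded, since the purely topological fibre-connectedness argument you yourself give is exactly what the paper relies on and is characteristic-free.
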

\begin{proof}
By considering a log resolution of $(X,S+B)$ it is easy to see that if $(X,S+B)$ is plt, then $(\tilde S,B_{\tilde S})$ is klt. Thus, we can assume that $(\tilde S,B_{\tilde S})$ is klt and aim to show that $(X,S+B)$ is plt near $S$. 

Let $\pi \colon Y \to X$ be a dlt modification of $(X,S+B)$ (see Corollary \ref{c-dlt-mod}) and write $K_Y + S_Y + B_Y = \pi^*(K_X+S+B)$. By definition (of a dlt modification) for any $\pi$-exceptional irreducible divisor $E$ we have that $E \subseteq \lfloor B_Y \rfloor$.  Write
\[
(\pi|_{S_Y})^*(K_{\tilde S} + B_{\tilde S}) = (K_Y+S_Y+B_Y)|_{\tilde S_Y} = K_{\tilde S_Y} + B_{\tilde S_Y}, 
\]
where $\tilde S_Y \to S_Y$ is the normalisation of $S_Y$, and $B_{\tilde S_Y}$ is the different. Let $E$ be a $\pi$-exceptional divisor intersecting $S_Y$. Since $E \subseteq \lfloor B_Y \rfloor$ and $(Y,S_Y+{\rm Ex}(\pi))$ is dlt, we must have that $E \cap S_Y \subseteq \lfloor B_{\tilde S_Y} \rfloor$. Since $E \cap S_Y \neq \emptyset$, this contradicts $(\tilde S, B_{\tilde S})$ being klt. 

Therefore we may assume that $E\cap S_Y=\emptyset$, so that $Y=X$ near $S$ and hence $(X,S+B)$ is dlt on a neighbourhood of $S$. Since $S$ is irreducible, $(X,S+B)$ is in fact plt.
\end{proof} 
\begin{comment}
Last, we show that the tame fundamental group of klt singularities it finite.
\begin{proposition}[{cf.\ \cite[Theorem 3.2]{xuzhang18}}] \label{prop:tame_fundamental_group} Let $(X,x)$ be a four-dimensional $\Q$-factorial klt singularity defined over a perfect field of characteristic $p>5$. Then the tame fundamental group $\pi^{\mathrm{tame}}_1(X,x)$ is finite.\footnote{Jakub:I need to check this carefully}
\end{proposition}
\begin{proof}
It follows by exactly the same proof as that of \cite[Theorem 3.2]{xuzhang18}. The only place where the mmp is used is to construct a Koll\'ar component, which we can do by Theorem \ref{t-mmp+} and the standard argument.   
\end{proof}
\end{comment}
%Further, one can show that divisorial contractions exist and minimal centres of log canonical pairs $(X,\Delta)$, with $X$ klt, are normal up to universal homeomorphism, following the same proofs as in \cite[Proposition 5.1 and Proposition 5.3]{HW19a}.

\section{Relative mmp}
Throughout this section, we assume that log resolutions of all log pairs with the underlying variety being birational to $X$ as below exist (and are given by a sequence of blow-ups along the non-snc locus). The assumption on the field (in the positive characteristic case) being algebraically closed is necessary to invoke Bertini's theorem for strongly F-regular singularities.

We start by proving the following base point free theorem. This result would follow easily from \cite{CT17} and \cite{witaszek2021relative} if we knew abundance for slc threefolds.

\begin{proposition} \label{p-bpf-families2}
Let $(X,\Delta)$ be a four-dimensional $\Q$-factorial dlt pair with standard coefficients projective over a DVR $R$ with perfect  residue field of characteristic $p>5$. Let $s, \eta \in \Spec R$ be the special and the generic point, respectively, and let $\phi \colon X \to \Spec R$ be the natural projection. When $R$ is purely positive-characteristic, we also assume that it is a local ring of a curve $C$ defined over an algebraically closed field and that $(X,\Delta) := (\mX, \Phi) \times_C \Spec(R)$ for a $\Q$-factorial four dimensional dlt pair $(\mX, \Phi)$ which is projective over $C$.   
%Let $R$ be a local ring of a curve $C$ defined over an algebraically closed field of characteristic $p>5$ with $s, \eta \in \Spec R$  being the special and the generic points, respectively. Let $(\mX, \Phi)$ be a $\Q$-factorial dlt pair with standard coefficients of dimension four and projective over the curve $C$  and let $(X,\Delta) := (\mX, \Phi)\times_C \Spec R$ with the natural projection $\phi \colon X \to \Spec R$. 

Suppose that  $K_X+\Delta$ is nef and big, and $\lfloor \Delta \rfloor = {\rm Supp}(\phi^{-1}(s))$. Then $K_X+\Delta$ is semiample.
\end{proposition}
\begin{proof}
%If the special fibre is $\Supp\, X_{s}=\sum _{i=1}^r E_i$ and $E_i\cdot \Sigma \ne 0$ for some $1\leq i\leq r$, then, since $X_{s} \cdot \Sigma = 0$, we may in fact assume that $E_i\cdot \Sigma <0$. Then $L$ is semiample by the same argument as in Proposition \ref{p-cont} (see also Corollary \ref{c-nuuh}).

%Thus, we can assume that $E_i \cdot \Sigma = 0$ for every $1\leq i\leq r $. By \cite[Theorem 1.1]{CT17} it is then enough to show that $L|_{X_{s}}$ and $L|_{X_{\eta}}$ are semiample. The latter follows from \cite[Theorem 1.4]{DW19}, and so we focus on the former. By \cite[Theorem 2.9]{GNT16}, $L|_{E_i}$ is semiample for every $1 \leq i \leq r$. By induction, we will show that $L|_{U_i}$ is semiample for $U_i = E_1 \cup \ldots \cup E_i$ and to this end we can assume that $L|_{U_{i-1}}$ is semiample. Let $g_1 \colon U_{i-1} \to Z$ be the associated fibration. Since $E_i \cdot \Sigma =0$, every curve in the fibre of $g_1$ is either contained in $E_i$ or disjoint from it. In particular, $g_1|_{U_{i-1}\cap E_i}$ has connected geometric fibres, and so $L|_{U_i}$ is semiample by \cite[Corollary 2.9]{Keel99}. 

Write $\Supp\, X_{s}=\sum _{i=1}^r E_i$ for irreducible divisors $E_i$, and $L := K_X+\Delta$. Since $(X, \Delta)$ is klt over $\eta$, we get that $L|_{X_{\eta}}$ is semiample by \cite[Theorem 1.4]{DW19} or the base point free theorem in characteristic zero \cite{KM98}. By \cite[Theorem 1.1]{CT17} and \cite[Theorem 1.2]{witaszek2021relative}, it thus suffices to show that $L|_{X_s}$ is semiample.
%\todo{Jakub: mixed char analogue of CT17}
%Note that  $(X,\Delta-\epsilon X_s)$ is klt and $K_X+\Delta-\epsilon X_s\equiv K_X+\Delta$ for $0 < \epsilon \ll 1$.
%Since $\Delta-\sum E_i$ is big over $Z$, we may assume that $\Delta-\sum E_i-H$ is big for some ample $\Q$-divisor $H$. 

Since $L$ is big, we may assume that  $\Delta + L \sim _{\Q}H+F+G$ where  $H$ is ample, $F\geq 0$  is supported on $X_s$, and the  support of $G \geq 0$ contains no divisors of $X_s$. 
In this paragraph we reduce the proposition to the case when $G$ does not contain any log canonical centres of $(X,\Delta)$. To this end, let $\pi\colon Y\to X$ be a dlt modification of $(X, \Delta +\delta G)$ for some $0<\delta \ll 1$ (see  Corollary \ref{c-dlt-mod}). 
Then we may assume that $K_Y+\Delta_Y=\pi ^*(K_X+\Delta)$ where $\Delta _Y=\pi ^{-1}_*\Delta +{\rm Ex}(\pi)$ and $\pi ^{-1}_* G$ contains no strata of $\lfloor \Delta_Y \rfloor$; indeed as $(X,\Delta )$ is dlt, the divisorial non-klt places of $(X,\Delta +\delta G)$  and $(X,\Delta)$  coincide on a given resolution (on which we run the mmp to construct the dlt modification) for $0 < \delta \ll 1$. Let $P$ be an effective $\pi$-exceptional divisor such that $-P$ is $\pi$-ample. Then the support of $P$ is contained in $Y_s$ and   
we may assume that $\pi ^*H-P$ is ample over $\Spec R$.  Note that $\Delta _Y-\pi ^* \Delta$ % =-(K_Y-\pi ^*K_X)\leq 0$ 
is supported on ${\rm Ex}(\pi)  \subseteq Y_s$.  We have \begin{align*}
\Delta _Y+\pi ^*L &= \Delta_Y-\pi^*\Delta + \pi^*(H+F+G)\\
&= (\pi^*H-P)+(P+\Delta _Y-\pi ^* \Delta +\pi ^*F+\pi ^*G-\pi^{-1}_*G) + \pi^{-1}_*G\\
&= H_Y+F_Y+G_Y,   
\end{align*}
where $G_Y=\pi ^{-1}_*G$, $H_Y=\pi ^*H-P  - aY_s$ is ample over $\Spec R$, and $F_Y=P+(\Delta _Y-\pi ^* \Delta) +\pi ^*F+(\pi ^*G-\pi^{-1}_*G) + aY_s$ is supported on $Y_s$ (we choose $a \gg 0$ so that $F_Y \geq 0$). Replacing $X, \Delta,L, H,F,G$ by $Y,\Delta _Y,\pi ^*L, H_Y, F_Y, G_Y$, we may assume that $(X,\Delta +\delta G)$ is dlt, $\lfloor \Delta \rfloor = \Supp\, X_s$, $F$ is supported on $X_s$, and $H$ is ample. 

%In fact, by blowing-up strata of $X_s$ we can assume that $G$ avoids them. More precisely, let $\pi \colon Y \to X$ be a log resolution of $(X,\Delta+H+G)$. We can pick $0 < \delta \ll 1$ so that the sets of non-klt divisors of $(X,\Delta)$ and $(X,\Delta+\delta(H+G))$ on $Y$ coincide.  By replacing $X$ by the dlt modification of $(X,\Delta+\delta(H+G))$ constructed as the output of the $(K_Y+\pi_*^{-1}(\Delta+\delta(H+G)) + \mathrm{Ex}(\pi))$-mmp (see the proof of Corollary \ref{c-dlt-mod}) and $\Delta$ by the log pullback of itself, we may assume that all the above assertions are satisfied and moreover that $(X,\Delta+\delta(H+G))$ is dlt. In particular, the support of $G \geq 0$ contains no strata of $X_s$.}    

Since $H$ is ample, we may further assume that the support of $F=\sum f_iF_i$ equals $X_s$ where the $f_i$ are chosen generically (here $F_i$ are distinct prime divisor). Moreover, we have that 
\[
(X,\Delta_{\epsilon,\delta}:=(1-\delta)\Delta {+\epsilon X_s}+\delta (H+F+G))
\] 
is klt for $0 < \delta \ll 1$ and some possibly negative $\epsilon$ such that $|\epsilon |\ll 1$. Note that $K_X + \Delta_{\epsilon,\delta} \sim_{\Q} (1+\delta)L$. 

Fixing $\delta$ and increasing $\epsilon$, since the $f_i$ are chosen generically, we obtain a sequence of rational numbers $\epsilon<\epsilon _1<\epsilon _2<\ldots <\epsilon _r$ such that  $\lfloor \Delta_{\epsilon _i,\delta}\rfloor =U_i:=\sum _{j=1}^iE_j$ {and $E_i$ occurs with coefficient one in $\Delta_{\epsilon _i,\delta}$.}
Here of course we have re-indexed the $E_i$ accordingly. Note that all $E_i$ are defined over a perfect field of positive characteristic. We claim  that 
\begin{claim}\label{c-sa} $(K_X + \Delta)|_{E_i^{\nu}}$ is semiample  where $E_i^{\nu}\to E_i$ is the normalisation, and hence also $(K_X + \Delta)|_{E_i}$ is semiample as the $E_i$ are normal up to universal homeomorphism (see Theorem \ref{t-semiampless-universal-homeo}). \end{claim}

% Recall that $(K_X + \Delta)|_{E_i^{\nu}}$ is semiample over $R$ where $E_i^{\nu}\to E_i$ is the normalisation (by \cite[Theorem 1.1]{waldronlc}) and hence also $(K_X + \Delta)|_{E_i}$ is semiample (since $E_i$ are normal up to universal homeomorphism). 

Granting the claim and proceeding by induction, we may assume that
 $(K_X+\Delta)|_{U_{i-1}}$ is semiample and we must show that $(K_X+\Delta)|_{U_i}$ is semiample. By \cite[Corollary 2.9]{Keel99}, it suffices to show that $g_2|_{U_{i-1}\cap E_i}$ has connected geometric fibres where $g_2 \colon E_i^\nu \to  V$ is the morphism associated to the semiample $\Q$-divisor $(K_X+\Delta)|_{E_i^\nu}$.
 Note that $(K_X+\Delta)|_{E_i^\nu}\equiv _{V}0$, hence $-(K_{E_i^\nu}+\Delta'_{E_i^\nu}):=-(K_X+\Delta_{\epsilon_i,\delta}-\delta H)|_{E_i^\nu }$ is 
 {ample} over $V$. By \cite[Theorem 1.2]{NT20} (this requires the base field to be perfect), the fibres of the non-klt locus of $({E_i^\nu},\Delta'_{E_i^\nu})$ are geometrically connected. Since this non-klt locus coincides with $U_{i-1}\cap {E_i}$ {(as $K_X+\Delta +\delta(H+G)$ is dlt)}, the statement of the proposition follows.

 \begin{proof}[Proof of Claim \ref{c-sa}] We proceed by induction. We warn the reader that $E^{\nu}_i$ need not be $\Q$-factorial. Set $K_{E_i^\nu}+\Delta_{E_i^\nu}=(K_X+\Delta)|_{E_i^\nu}$. 
 % Note that for $i=1$, 
 % \[(1+\delta)L|_{E_1^\nu}=(K_X+\Delta _{\epsilon _1, \delta})|_{E_1^\nu}=K_{E_1^\nu}+\Delta'_{E_i^\nu}+\delta H|_{E_1^\nu}\] is klt with big boundary and hence $L|_{E_1^\nu}$ is semiample by
 %  \cite[Theorem 1.2]{waldronlc}.

 %For $i>1$, we argue as follows. 
 First, if $L|_{E^{\nu}_i}$ is big, then it is semiample by \cite[Theorem 1.1]{waldronlc} and base change to an algebraically closed field. Hence, we can assume that it is not big. In particular, $L|_{E^{\nu}_i}-\gamma \lfloor \Delta_{E_i^\nu}\rfloor$ is not pseudo-effective for any $\gamma > 0$ (note that $\lfloor \Delta_{E_i^\nu}\rfloor = \lfloor \Delta - E_i \rfloor|_{E_i^\nu}$ is $\Q$-Cartier). Indeed, assume otherwise and write $L \sim_{\Q} H'+F'+G'$, where $H'$ is ample, $F'\geq 0$ is supported on $X_s$, and the support of $G'\geq 0$ contains no divisors of $X_s$. By shifting $F'$ by a multiple of $\phi^{-1}(s)$, we may assume that $F'$ does not contain $E_i$ in its support (but it is not necessarily effective any more). Thus we can write
 \begin{align*}
 (m+1)L|_{E^{\nu}_i} &\sim_{\Q} H'|_{E^{\nu}_i} + mL|_{E^{\nu}_i} + F'|_{E^{\nu}_i} + G'|_{E^{\nu}_i} \\
 &\sim_{\Q} H'|_{E^{\nu}_i} + (mL|_{E^{\nu}_i}-t\lfloor \Delta_{E_i^\nu}\rfloor) + (F'|_{E^{\nu}_i}+t\lfloor \Delta_{E_i^\nu}\rfloor) + G'|_{E^{\nu}_i}
 \end{align*}
 for $m,t \in \mathbb{N}$ %and $0 < \epsilon \ll 1$ such that that both $\epsilon
 such that %$H'|_{E^{\nu}_i} + 
 $mL|_{E^{\nu}_i}-t\lfloor \Delta_{E_i^\nu}\rfloor$ and $F'|_{E^{\nu}_i}+t\lfloor \Delta_{E_i^\nu}\rfloor$ are pseudo-effective. This shows that $L|_{E^{\nu}_i}$ is big.

Let $(\overline{E}_i, \Delta_{\overline{E}_i})$ be a dlt modification of $(E^{\nu}_i, \Delta_{E^{\nu}_i})$ so that $\overline{E}_i$ is $\Q$-factorial (see \cite[Corollary 3.6]{hnt}).
%\todo{Jakub: Do we need this map to be small? I would like to drop this assumption. CH: Indeed, I couldn't see why we need this.}
Let $r$ be the Cartier index of $K_{\overline{E}_i} + \Delta_{\overline{E}_i}$ and let $L_i$ be the pullback of $L|_{E_i^{\nu}}$. Note that $L_i = K_{\overline{E}_i} + \Delta_{\overline{E}_i}$.
We will run the $(K_{\overline{E}_i}+\Delta_{\overline{E}_i}-\gamma \lfloor \Delta_{\overline{E}_i}\rfloor)$-mmp %with scaling of $\gamma \lfloor \Delta_{E_i^\nu}\rfloor$
  for $\gamma = \frac{1}{6r+1}$, which by the above must terminate with a Mori fibre space (cf.\ \cite[Theorem 2.13]{GNT16}). 
 
 We claim that each step of the mmp is $(K_{\overline{E}_i}+\Delta_{\overline{E}_i})$-trivial. We start by showing this for the first step. Note that 
 $(K_{\overline{E}_i}+\Delta_{\overline{E}_i}-\lfloor \Delta_{\overline{E}_i}\rfloor)\cdot \Gamma \geq -6$ for any $(K_{\overline{E}_i}+\Delta_{\overline{E}_i}-\gamma\lfloor \Delta_{\overline{E}_i}\rfloor)$-negative extremal ray $\Gamma$ (see \cite[Theorem 1.1]{DW19}; observe that the parameters $d$ therein are equal to one as the field is perfect).
 But  if $(K_{\overline{E}_i}+\Delta_{\overline{E}_i})\cdot \Gamma >0$, then  $(K_{\overline{E}_i}+\Delta_{\overline{E}_i})\cdot \Gamma \geq 1/r$ by the definition of $r$, 
 and hence 
 \[
 (K_{\overline{E}_i}+\Delta_{\overline{E}_i}-\gamma \lfloor \Delta_{\overline{E}_i}\rfloor)\cdot \Gamma \geq \frac {1-\gamma}{r}-6\gamma=0
 \]
 which is a contradiction. In particular, $(K_{\overline{E}_i}+\Delta_{\overline{E}_i})\cdot \Gamma =0$. Hence, the nefness of $K_{\overline{E}_i}+\Delta_{\overline{E}_i}$ is preserved by the first step of the mmp, and so is its Cartier index $r$ by \cite[Corollary 1.5]{ABL20} (precisely, if $\overline{E}_i \to Z$ is the contraction of $\Gamma$ given by $L' = K_{\overline{E}_i}+\Delta_{\overline{E}_i}-\gamma\lfloor \Delta_{\overline{E}_i}\rfloor+A$ for some ample $\Q$-divisor $A$, then we apply this result to $rL_i \sim_{\Q,Z} K_{\overline{E}_i}+\Delta_{\overline{E}_i}-\gamma\lfloor \Delta_{\overline{E}_i}\rfloor+A + (rL_i - L')$ over $Z$). Hence, we may repeat this procedure for all steps of the $(K_{\overline{E}_i}+\Delta_{\overline{E}_i}-\gamma \lfloor \Delta_{\overline{E}_i}\rfloor)$-mmp.
% It follows that this mmp preserves the nefness and the assumption on the existence of $h$, and so we may repeat the process. 

%  To this end, assume that after replacing $(\overline{E}_i,\Delta_{\overline{E}_i})$ by the output of some finite number of steps of the mmp, there exists a log resolution $h \colon E'_i \to \overline{E}_i$ such that $rh^*(K_{\overline{E}_i}+\Delta_{\overline{E}_i})$ is  $\Q$-equivalent to a Cartier divisor (in the first step, we can take $h=\mathrm{id}$, but not necessary later, as the Cartier index of relatively numerically trivial divisors is not preserved under the mmp in positive characteristic due to the base point free theorem being weaker than in characteristic zero). 
 
%  To this end, we may replace $(\overline{E}_i,\Delta_{\overline{E}_i})$ by the output of some finite number of steps of the mmp. The Cartier index $r$ of $K_{\overline{E}_i}+\Delta_{\overline{E}_i}$ is preserved by divisorial contractions and flips
 
%  To this end, we may replace $(\overline{E}_i,\Delta_{\overline{E}_i})$ by the output of some finite number of steps of the mmp. The Cartier index $r$ of $K_{\overline{E}_i}+\Delta_{\overline{E}_i}$ is preserved by divisorial contractions and flips
%  % Arguing as in Step 2 of the proof of \cite[Theorem 1.2]{waldronlc}, after finitely many steps we arrive to a minimal model or a Mori fiber space.
%  % Since semiampleness is preserved by flops, we may assume that we either have a $K_{E_i^\nu}+B_{E_i^\nu}-\gamma \lfloor \Delta_{E_i^\nu}\rfloor$ minimal model or Mori fiber space.

 By replacing $\overline{E}_i$ by the output of this mmp and $\Delta_{\overline{E}_i}$ together with $L_i$ by their pushforwards, we can assume that we have a $(K_{\overline{E}_i}+\Delta_{\overline{E}_i}-\gamma \lfloor \Delta_{\overline{E}_i}\rfloor)$-Mori fibre space $\psi \colon \overline{E}_i\to Z$ (in particular $Z$ is normal). Since $\psi$ is $L_i$-trivial, $\lfloor \Delta_{\overline{E}_i}\rfloor$ is relatively ample so that one of its components, say $W$, dominates $Z$.
 We claim that $L_i|_{\lfloor \Delta_{\overline{E}_i}\rfloor}$ (and so $L_i|_W$) is semiample.
 This can be checked after a base change to an algebraically closed field and on a dlt modification of $(\overline{E}_i, \Delta_{\overline{E}_i})$ (cf.\ \cite[Lemma 2.11(3)]{CT17})  in which case it follows from  \cite[Theorem 1.3]{waldronlc}.
 Now $L_i \sim_{\Q} \psi ^* D$ for some $\Q$-divisor $D$ on $Z$. But then $(\psi |_W)^*D \sim_{\Q} L_i|_W$, which is semiample, and hence both $D$ and $L_i$ are semiample as $Z$ is normal (cf.\ \cite[Lemma 2.11(4)]{CT17}). \qedhere
 
 % Suppose now that $K_{E_i^\nu}+B_{E_i^\nu}-\gamma \lfloor \Delta_{E_i^\nu}\rfloor$  is nef. By \cite{Kee99}, it suffices to show that $L|_{\mathbb E(L)}$ is semiample. Note that if $L\cdot C=0$, then $C\cdot  \lfloor \Delta_{E_i^\nu}\rfloor\leq 0$ (as $K_{E_i^\nu}+B_{E_i^\nu}-\gamma \lfloor \Delta_{E_i^\nu}\rfloor$  is nef). Thus  $\mathbb E(L)$ is either the disjoint union of two closed sets $V_1\subset  \lfloor \Delta_{E_i^\nu}\rfloor$ and $V_2\subset E_i\setminus  \lfloor \Delta_{E_i^\nu}\rfloor$ or $\mathbb E(L)$ contains a surface $S$ which intersects $\lfloor \Delta_{E_i^\nu}\rfloor$ properly.
\end{proof}
\end{proof}

\begin{proposition} \label{p-bpf-families} With notation as in the first paragraph of Proposition \ref{p-bpf-families2}, suppose that  ${\rm Supp}(\phi^{-1}(s))\subset  \lfloor \Delta \rfloor$. Let $L$ be a nef and big $\Q$-Cartier $\Q$-divisor on $X$ such that $L-(K_X+\Delta)$ is ample. Then $L$ is semiample and induces a morphism $f\colon X \to Z$ over $\Spec R$. In particular, every $f$-numerically trivial $\Q$-Cartier divisor descends to a $\Q$-Cartier divisor on $Z$.
\end{proposition}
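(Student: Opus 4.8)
The plan is to reduce Proposition~\ref{p-bpf-families} to Proposition~\ref{p-bpf-families2} by replacing $\Delta$ with a boundary for which $L$ is relatively $\Q$-linearly equivalent to the log canonical divisor. The first move is to arrange that $\lfloor\Delta\rfloor=\Supp(\phi^{-1}(s))$. For $m\gg 0$ I would lower the coefficient of every component of $\lfloor\Delta\rfloor$ that dominates $\Spec R$ from $1$ to $1-\tfrac1m$. The resulting pair is still $\Q$-factorial dlt with standard coefficients, the divisor $L-(K_X+\Delta)$ stays ample over $\Spec R$ (we have added only a small effective divisor to an ample one), and $\Supp(\phi^{-1}(s))$ is still contained in $\lfloor\Delta\rfloor$. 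Now every coefficient-one component of $\lfloor\Delta\rfloor$ is vertical over $\Spec R$, hence contained in $\phi^{-1}(s)$, so $\lfloor\Delta\rfloor=\Supp(\phi^{-1}(s))$.

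Next, writing $A:=L-(K_X+\Delta)$, which is ample over the affine base $\Spec R$, I would pick $m'\gg 0$ with $m'A$ relatively very ample and set $D:=\tfrac1{m'}H$ for a general $H\in|m'A|$. Since the ground field is infinite, a Bertini argument shows $(X,\Delta+D)$ is dlt; as the coefficients of $D$ are $\tfrac1{m'}$ we get $\lfloor\Delta+D\rfloor=\lfloor\Delta\rfloor=\Supp(\phi^{-1}(s))$, while $K_X+\Delta+D\sim_{\Q}L$ is nef and big. Applying (the proof of) Proposition~\ref{p-bpf-families2} to the pair $(X,\Delta+D)$ then yields that $L$ is semiample over $\Spec R$ and hence induces a contraction $f\colon X\to Z$ over $\Spec R$ with $f_*\OO_X=\OO_Z$. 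The subtlety here is that $\Delta+D$ has non-standard coefficients and the pair no longer descends to $\mX$; but the proof of Proposition~\ref{p-bpf-families2} uses neither the product structure of $X$ over $C$ nor, just as in the proof of Theorem~\ref{t-mmp+}, the standard-coefficient hypothesis in an essential way, in particular because dlt modifications of $\Q$-factorial fourfold log pairs admitting log resolutions exist regardless of the coefficients.

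For the last assertion, let $N$ be a $\Q$-Cartier divisor that is $f$-numerically trivial. Every curve $C$ with $L\cdot C=0$ is contracted by $f$ and hence satisfies $N\cdot C=0$, so $N$ vanishes on $L^{\perp}\cap\overline{\mathrm{NE}}(X/\Spec R)$; by the standard perturbation argument this forces $mL\pm N$ to be nef over $\Spec R$ for $m\gg 0$. For $m\gg 0$ the divisors $mL\pm N$ are moreover big and $(mL\pm N)-(K_X+\Delta)$ is ample over $\Spec R$ (it is the sum of the ample $L-(K_X+\Delta)$ and the nef $(m-1)L\pm N$), so by the case of the proposition already established, $mL+N$ and $mL-N$ are semiample over $\Spec R$. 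Being numerically $f$-trivial, each is, up to $\Q$-linear equivalence, the pullback via $f$ of a $\Q$-Cartier divisor on $Z$, using $f_*\OO_X=\OO_Z$ and rigidity of $f$. Subtracting, $2N$ and hence $N$ descends to a $\Q$-Cartier divisor on $Z$.

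I expect the main obstacle to lie in justifying the appeal to Proposition~\ref{p-bpf-families2} for the perturbed pair: one must verify that every three-dimensional ingredient entering that proof — the relative semiampleness of $L|_{X_\eta}$, abundance for the normalised surfaces appearing there, the threefold minimal model program with scaling used in its internal claim, and the dlt modifications it invokes — remains valid once the standard-coefficient assumption is dropped, and that none of those steps secretly relied on the product structure of $X$ over $C$.
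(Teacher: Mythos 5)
Your approach is essentially the same as the paper's: the published proof is the one-line "This follows from Proposition \ref{p-bpf-families2} by perturbation," and your two-step perturbation (lowering horizontal coefficient-one components from $1$ to $1-\tfrac1m$ so that $\lfloor\Delta\rfloor=\Supp(\phi^{-1}(s))$, then adding $\tfrac1{m'}H$ for general $H\in|m'A|$ so that $K_X+\Delta'\sim_\Q L$) is exactly the perturbation the authors intend, and your deduction of the last assertion by applying the first part to $mL\pm N$ is the natural reading of "in particular." You have also correctly spotted a gap the paper does not acknowledge, namely that adding $\tfrac1{m'}H$ destroys the standard-coefficients hypothesis under which Proposition \ref{p-bpf-families2} is stated, so strictly speaking one must cite the proof of that proposition rather than its statement; your observation that its proof never uses the standard coefficients (given the section-wide log-resolution hypothesis, which in particular makes Corollary \ref{c-dlt-mod} available for arbitrary coefficients) is what makes the reduction legitimate. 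One small caveat: the "standard perturbation argument" giving nefness of $mL\pm N$ for $m\gg0$ implicitly uses a relative cone theorem for $(X,\Delta)$ (finiteness of extremal rays near $L^\perp$ in the $(K_X+\Delta)$-negative region); this is available in the paper's setting via the restriction-to-$\lfloor\Delta\rfloor$ argument of Proposition \ref{p-cont}/Theorem \ref{t-mmp}, but it deserves to be named rather than waved at, given how much the rest of your write-up insists on tracking hypotheses precisely.
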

\begin{proof}
This follows from Proposition \ref{p-bpf-families2} by perturbation.
\end{proof}

\begin{proof}[Proof of Theorem \ref{t-mmp-families+}] The proof is similar to that of \cite[Proposition 4.1]{HW19a}. We will run an arbitrary $(K_{X}+\Delta)$-mmp.
%We will run a $(K_{X}+\Delta)$-mmp with scaling of a sufficiently ample divisor {$A$}  (or an arbitrary $(K_X+\Delta)$-mmp if we are in mixed characteristic). 

In the positive characteristic case, we will implicitly work with the spread-out $\mathcal{X}$ of $X$ and repeatedly replace $C$ by an appropriate neighbourhood of the special point $s\in U\subset C$ and $\mathcal X$ by $\mathcal X\times _C U$, so that we can apply results on the mmp for finite type schemes over a field. Let $\eta$ be the generic point of $\Spec R$.  \\

Following the proof of Theorem \ref{t-mmp}, we will run a $(K_X+\Delta)$-mmp  $X =X_0\dasharrow  X_1\dasharrow \ldots \dasharrow  X_k$. We will denote the central fibre by $ X_{k,s}$ and the pushforward of $\Delta$ on $X_k$ by $\Delta_k$. Note that $X_{k,s}\sim _{\Spec R} 0$.
%\item $A_k$ is big over $C$ and no strata of $\lfloor \Phi _k\rfloor$ is contained in ${\mathbf B}_+(A_k/C)$\footnote{Jakub: as in Claim 4.1, is this necessary?},
%\item \st{$\Phi _k+t_kA_k\sim _\Q \Phi''_k+G_k$ where $(\mathcal X _k,\Phi''_k+G_k)$ is dlt, $\lfloor \Phi''_k\rfloor=\lfloor \Phi _k\rfloor$ and $G_k$ is ample over $C$.}\end{enumerate}
By the same argument as that in Theorem \ref{t-mmp}, the cone theorem is valid for $K_{ X_k}+\Delta _k$. Hence if $K_{ X_k}+\Delta _k$ is not nef over $\Spec R$, then there exists  a $(K_{ X_k}+\Delta _k)$-negative extremal ray over $\Spec R$ spanned by a curve $\Sigma \subset X _{k,s}$.

Let $G$ be an ample $\Q$-divisor such that $L = K_{ X_k} + \Delta_k + G$ is nef and $L^{\perp} = \mathbb R[\Sigma]$. Then $L$ is semiample by Proposition \ref{p-bpf-families}.
Let $f\colon X _k\to Z$ be the corresponding contraction, which is birational as $\kappa(K_X+\Delta\,/\,{\rm Spec}(R))\geq 0$.  There are two cases.
If  $f\colon X _k\to Z$ is a divisorial contraction  (over $\Spec R$), then we may set $X _{k+1}:= Z$ and continue our mmp  over $\Spec R$. Thus we must show that if the induced morphism $f\colon X _k\to Z$ (over $\Spec R$) is a flipping contraction, then the flip $f^+ \colon X_{k+1}\to Z$ exists. Further, we must show that there is no infinite sequence of flips. The latter follows from Theorem \ref{t-st4}. As for the former, by means of perturbation, we may assume that $\lfloor \Delta_k \rfloor = \Supp\,  X_{k,s}$, and consider two cases. 

\begin{claim}\label{c-central} If the flipping locus is contained in the special fibre $ X_{k,s}$, then the flip $ X_{k}\dasharrow X_{k+1}$ exists.
\end{claim}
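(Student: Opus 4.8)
The plan is to reduce to Theorem~\ref{t-sflip} whenever an $f$-anti-ample component of the central fibre is available, and otherwise to the three-dimensional minimal model program. As the statement is local over $Z$, I would first shrink $Z$ to an affine neighbourhood of $z:=f(\mathrm{Ex}(f))$ and replace $\mathcal X_k$ by its preimage; since $\mathrm{Ex}(f)\subseteq\mathcal X_{k,s}$, the morphism $f$ is then an isomorphism over $Z$ minus the fibre of $Z\to\Spec R$ over $s$, in particular over $\eta$. Write $\mathcal X_{k,s}=\sum_{i=1}^r a_iE_i$ with the $E_i$ irreducible; by hypothesis each $E_i\subseteq\lfloor\Delta_k\rfloor$, and $\mathcal X_{k,s}\sim_C 0$ forces $\sum_i a_i(E_i\cdot\Sigma)=0$, where $\Sigma$ spans the contracted extremal ray. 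Lowering the coefficients of $\lfloor\Delta_k\rfloor-\sum E_i$ as in the proof of Theorem~\ref{t-sflip}, I may assume $\lfloor\Delta_k\rfloor=\sum E_i$.

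If $E_i\cdot\Sigma\neq 0$ for some $i$, then, as $\sum_i a_i(E_i\cdot\Sigma)=0$ with all $a_i>0$, there are indices $a,b$ with $E_a\cdot\Sigma<0$ and $E_b\cdot\Sigma>0$. Since $\Sigma$ generates $\mathrm{NE}(\mathcal X_k/Z)$ and $E_a\cdot\Sigma<0$, every contracted curve lies in $E_a$, so $\mathrm{Ex}(f)\subseteq E_a$. Then $(\mathcal X_k,\Delta_k)$ together with $S:=E_a$ and $A:=E_b$ satisfies the hypotheses of Theorem~\ref{t-sflip}: the coefficients are standard, $\rho(\mathcal X_k/Z)=1$, $S$ is $f$-anti-ample, $A$ is $f$-ample, and $S,A\subseteq\lfloor\Delta_k\rfloor$ are irreducible. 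Hence the flip $\mathcal X_{k+1}$ exists.

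The remaining case, $E_i\cdot\Sigma=0$ for all $i$, is the one I expect to be the main obstacle. Here each $E_i$ is $f$-numerically trivial, so by Proposition~\ref{p-bpf-families} it descends to a $\Q$-Cartier divisor $\bar E_i$ on $Z$ with $E_i=f^*\bar E_i$. As $\mathrm{Ex}(f)$ is connected and $E_i\cdot\Sigma=0$ forces, for each $i$, either $\mathrm{Ex}(f)\subseteq E_i$ or $\mathrm{Ex}(f)\cap E_i=\emptyset$, there is a component $E:=E_{i_0}$ with $\mathrm{Ex}(f)\subseteq E$. By Lemma~\ref{l-dltad} the threefold $(E^\nu,\Delta_{E^\nu})$, where $K_{E^\nu}+\Delta_{E^\nu}=(K_{\mathcal X_k}+\Delta_k)|_{E^\nu}$, is dlt, and by adjunction $K_{E^\nu}+\Delta_{E^\nu}$ has negative degree on the curves over $\mathrm{Ex}(f)$. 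I would then run the three-dimensional $(K_{E^\nu}+\Delta_{E^\nu})$-minimal model program over $f(E)$ (which exists and terminates by \cite{DW19}), pass to the relative canonical model $E^\nu\dashrightarrow\tilde E\to f(E)$, and recall that the associated three-dimensional canonical algebra is finitely generated by \cite[Theorem~1.4 and 1.6]{DW19}. The flip $\mathcal X_{k+1}$ is then $\mathrm{Proj}_Z$ of $\bigoplus_{m\geq 0}f_*\mathcal O_{\mathcal X_k}(m(K_{\mathcal X_k}+\Delta_k))$, and it remains only to check that this $\mathcal O_Z$-algebra is finitely generated near $z$; away from $z$ this is clear since $f$ is an isomorphism there.

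The delicate point is precisely this last finite generation. Because $E$ is $f$-\emph{trivial} rather than $f$-anti-ample, Shokurov's reduction to pl-flips and the extension statement of Proposition~\ref{p-1} do not apply directly, so one must compare the fourfold algebra with the (finitely generated) threefold one by hand, for instance via the exact sequences
\[
0\to\mathcal O_{\mathcal X_k}\bigl(m(K_{\mathcal X_k}+\Delta_k)-E'\bigr)\to\mathcal O_{\mathcal X_k}\bigl(m(K_{\mathcal X_k}+\Delta_k)\bigr)\to\mathcal O_{E'}\bigl(m(K_{\mathcal X_k}+\Delta_k)|_{E'}\bigr)\to 0,
\]
with $E'$ ranging over the components of $\mathcal X_{k,s}^{\mathrm{red}}$ through $\mathrm{Ex}(f)$, and using $\kappa(K_X+\Delta/\Spec R)\geq 0$ to produce (up to vertical divisors) an effective $D\sim_{\Q}K_{\mathcal X_k}+\Delta_k$ which necessarily contains $\mathrm{Ex}(f)$. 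Alternatively, one may first lower all coefficients of the $E_i$ to make $(\mathcal X_k,\Delta_k)$ klt — this changes neither the extremal ray nor the flip, since the $E_i=f^*\bar E_i$ only twist the canonical algebra over $Z$ — and then run the argument using $\kappa\geq 0$. I expect the rest to be routine.
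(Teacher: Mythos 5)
The first half of your argument matches the paper exactly: when some component $E_i$ of $\mathcal X_{k,s}$ satisfies $E_i\cdot\Sigma\neq 0$, you correctly observe that $\mathcal X_{k,s}\cdot\Sigma=0$ forces the existence of an $f$-anti-ample and an $f$-ample component inside $\lfloor\Delta_k\rfloor$, and Theorem~\ref{t-sflip} applies directly. (Your observation that $\mathrm{Ex}(f)$ must be contained in the anti-ample component is not needed for the hypotheses of Theorem~\ref{t-sflip}, which only asks that the flipping contraction be $(K+S+A+B)$-negative with $S$ anti-ample and $A$ ample; but it does no harm.)

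The second case — all $E_i\cdot\Sigma=0$ — is where the real difficulty lives, and your proposal has a genuine gap there. You correctly identify it yourself: because the component $E$ you single out is $f$-numerically \emph{trivial}, neither Shokurov's reduction to pl-flips nor Proposition~\ref{p-1} applies, and knowing that the threefold algebra $R(K_{E^\nu}+\Delta_{E^\nu})$ is finitely generated over $f(E)$ says nothing directly about the fourfold algebra $\bigoplus_m f_*\mathcal O_{\mathcal X_k}(m(K_{\mathcal X_k}+\Delta_k))$. The short exact sequence bookkeeping you sketch would require controlling $H^1$ of twists of ideal sheaves over $Z$, which is not available here (one cannot invoke Kawamata--Viehweg vanishing, and the pair is not relatively F-regular on the total space), and the alternative ``lower the coefficients to make the pair klt'' lands you precisely in the territory of four-dimensional klt flips, which is what the whole paper is trying to avoid. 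The paper's actual proof of Claim~\ref{c-central} does something quite different in this case: it \emph{manufactures} an anti-ample component rather than searching for one. Using Lemma~\ref{l-weird-bertini}, one constructs $\Q$-Cartier Weil divisors $H_1$ ($f$-ample) and $H_2$ ($f$-anti-ample) with $H_1+H_2$ $f$-numerically trivial and $(X,S+B+H_1+H_2)$ dlt over $\eta$; then $H:=H_1+H_2$ descends to a reduced $\Q$-Cartier $H'$ on $Z$ by Proposition~\ref{p-bpf-families}, with the crucial property that for any $\Q$-factorial birational model $h\colon Y\to Z$, the relative Néron--Severi space $N^1(Y/Z)$ is generated by the strict transform of $H'$ and the $h$-exceptional divisors. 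One then passes to a dlt modification of $(\mathcal X_k,\Phi_k+H)$ and runs a $(K_Y+\Phi_Y+H_Y)$-mmp over $Z$ followed by a $(K_Y+\Phi_Y)$-mmp with scaling of $H_Y$; at every step some component of $\lfloor\Phi_Y+H_Y\rfloor$ has negative intersection and some has positive intersection with the extremal ray, so Theorem~\ref{t-sflip} supplies the flips, special termination (Theorem~\ref{t-st4}) terminates them, and the base point free theorem (Proposition~\ref{p-bpf-families2}) plus a negativity-lemma argument shows the output is small over $Z$, hence is the required flip. This auxiliary-divisor construction is the missing ingredient in your sketch.
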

\begin{proof}
Let $\Sigma$ be a flipping curve.  If $\Sigma \cdot S\ne 0$ for some component $S$ of $ X _{k,s}$, then since $\Sigma\cdot X _{k,s}=0$, we may assume that there is another component $E$ of $X _{k,s}$ such that $\Sigma\cdot S<0$ and $\Sigma\cdot E>0$ (up to swapping $S$ and $E$). Thus the flip exists by Theorem \ref{t-sflip}. We may therefore assume that  $\Sigma\cdot S=0$ for every component of $\mathcal X _{k,s}$. We follow the proof of \cite[Proposition 4.1]{HW19a}. Now let $S$ be a component of $X_{k,s}$ such that $\Sigma \subseteq S$.  Since $S\cdot \Sigma=0$, we may assume that $S':=f_*S$ is also $\Q$-Cartier. Let $H'$ be a reduced $\Q$-Cartier divisor on $Z$ such that
\begin{enumerate}
%\item \st{$H'$ contains the non simple normal crossings locus of $(Z,f_*\Phi _k)$,}
\item $H=f^{-1}_*H'$ contains  ${\rm Ex}(f)$,
\item  $(X_k, \Delta_k + H)$ is dlt over the generic point $\eta \in \Spec R$,
\item for any proper birational morphism $h\colon Y\to Z$ such that $Y$ is $\Q$-factorial, we have that $N^1(Y/Z)$ is generated by the irreducible components of the strict transform of $H'$ and the $h$-exceptional divisors, and
\item $H$ and $\Delta _k$ have no common components. 
\end{enumerate}
Explicitly, we take $H_1$ and $H_2$ as in Lemma \ref{l-weird-bertini}. Then $H=H_1+H_2$ descends to a $\Q$-Cartier divisor $H'$ on $Z$ by Proposition \ref{p-bpf-families}. We leave the verification of (3) to the reader (here we use that $\rho(X_k/Z)=1$, otherwise we would need to enlarge $H'$).

Let $p\colon Y\to  X_k$ be a dlt modification of $(X_k,\Delta _k + H)$ (see Corollary \ref{c-dlt-mod}). Since the generic points of $H_1\cap H_2$ are simple normal crossing in $X_{k,\eta}$, we may assume that the dlt modification is constructed from a log resolution which is an isomorphism over the generic points of $(H_1 \cap H_2)|_{X_{k,\eta}}$. In particular, $p$ must be an isomorphism over the generic point $\eta \in R$. Set $h = f \circ p \colon Y \to Z$. 

First we will run a $(K_Y+\Delta _Y+H_Y)$-mmp over $Z$ where $H_Y=p^{-1}_* H$ and $\Delta _Y=p_*^{-1}\Delta_k +{\rm Ex}(p)$. All extremal rays $R$ are contained in the support of $h^*H'$ and some component of $h^*H'$ has a non-zero intersection number with $R$ (by condition (3) above). Since $h^*H'\cdot R=0$, there are components $E,E'$ of ${\Supp}\, h^*H'$ such that $E\cdot R<0$ and $E'\cdot R>0$. Since the support of $h^*H'$ is contained in the support of $\lfloor  \Delta _Y+H_Y\rfloor$, the necessary flips exist by Theorem \ref{t-sflip} and we may run the required minimal model program. Note that by Theorem  \ref{t-st4}, there is no infinite sequence of such flips and hence we may assume that, up to replacing $Y$ by the output of the mmp, $K_Y+\Delta _Y+H_Y$ is nef over $Z$. 
We now run a $(K_Y+\Delta _Y)$-mmp with scaling of $H_Y$ over $Z$. If $R$ is a corresponding $(K_Y+\Delta _Y)$-negative extremal ray, then $H_Y\cdot R>0$. Since 
\[ H_Y\equiv _h -\sum b_jE_j,\qquad b_j\geq 0\]
for some $h$-exceptional divisors $E_j$, it follows that $R\cdot E_j<0$ for some $j$. Since $E_j$ is contained in $Y_s$, it is contained in the support of $h^*Z_s$. Since $R\cdot h^*Z_s=0$, there is a component $E'$ of $\lfloor  \Delta _Y\rfloor$ such that $R\cdot E'>0$ and the necessary flip exists by Theorem \ref{t-sflip}.

By Theorem  \ref{t-st4}, there is no infinite sequence of such flips and hence we may assume that $K_Y+\Delta_Y$ is nef over $Z$. By Proposition \ref{p-bpf-families2} applied to $K_Y+ \Delta_Y+h^*A$ for a sufficiently ample divisor $A$ on $Z$, we get that $K_Y+\Delta_Y$ is semiample over $Z$, and thus by replacing $Y$ by the image of the associated semiample fibration, we can assume that $K_Y + \Delta_Y$ is relatively ample. We claim that $Y\to Z$ is small, and hence $X _{k+1}:=Y$ is the required flip (the canonical ring $R(K_{X_{k}} + \Delta_k) = R(K_Z + f_* \Delta_k) = R(K_Y + \Delta_Y)$ is finitely generated).
To this end, note that if $p \colon W\to X _{k}$ and $q \colon W\to Y$ is the normalisation of the graph of the rational map $ X _{k}\dasharrow Y$, then we may write 
\[
 p^*(K_{X _{k}}+\Delta_k)-q^*(K_Y+\Delta_Y) = E
 \] 
 where $p_*E=0$ (as $X _{k}\to Z$ is small) and $-E$ is relatively {ample} over $Z$. By the negativity lemma,  $E\geq 0$. On the other hand, by construction, if $F$ is a $p$-exceptional but not $q$-exceptional divisor on $W$, then by definition of $\Delta_Y$ we have
\[
{\rm mult}_F(E)={\rm mult}_F(p^*(K_{X _{k}}+\Delta _k))-{\rm mult}_F(q^*(K_Y+\Delta_Y))\leq 1-1=0,
\]
and so, in fact, ${\rm mult}_F(E) = 0$. This contradicts $-E$ being relatively ample over $Z$.\end{proof}

%\textbf{Case 2.} The flipping locus is dominant over $C$.\\

%\vspace{2em}

Let $f_\eta\colon (X_{k,\eta}, \Delta _{k,\eta})\to Z_\eta $ be the restriction of $f$ to $X_\eta$. By the above claim, we may assume that $f_\eta$ is not the identity. Since $f$ is a flipping contraction, it is easy to see that $f_\eta$ is a flipping contraction and in particular $X_{k,\eta}$ is $\Q$-factorial, $\rho (X_{k,\eta}/Z_\eta )=1$, $X_{k,\eta}\to Z_\eta $ is small, and $-(K_{X_{k,\eta} }+ \Delta _{k,\eta})$ is ample over  $Z_\eta $. By \cite[Theorem 1.3]{DW19} in positive characteristic (and \cite{bchm06} in characteristic zero), the corresponding flip $f_\eta ^+\colon X_{k,\eta}^+\to Z_\eta $ exists. Let $f' \colon X'_k \to Z$ be the closure of the projective morphism $f^+_\eta$. Note that the fibre of $X'_k$ over $s$ may be highly singular and in particular non-normal and not even $R_1$.

Let $\Delta'_k$ be a divisor on $X'_k$ given as the sum of the closure of $\Delta^+_{k,\eta}$ and the support of $X'_{k,s}$. Let $p \colon Y \to X'_k$ be a log resolution of $(X'_k, \Delta'_k)$ and set $\Delta' _Y=p_*^{-1}\Delta'_k +{\rm Ex}(p)$. We now run a $(K_Y+\Delta'_Y)$-mmp  over $X'_{k}$. We must show that the necessary flips exist and that the corresponding mmp terminates. Let $v \colon Y\to V$ be a flipping contraction.
Note that if the flipping locus is contained in the central fibre $Y_s$, then the flip exists by Claim \ref{c-central}. If the flipping locus dominates $\Spec R$, then as $X'_{k,\eta}$ is $\Q$-factorial, by \cite[Lemma 3.1]{HW19a} (the same argument works in mixed characteristic), there is a $v$-ample $p$-exceptional divisor $A_\eta\subset Y_\eta$. As $\rho(Y/V)=1$, its closure $A\subset Y$ is also relatively ample and is contained in the support of $\lfloor \Delta' _Y\rfloor$. Similarly, we also have an effective $p$-exceptional divisor $F_\eta\subset Y_\eta$ which is $p$-anti-ample. Therefore, there is a component $S_\eta$ of $F_\eta$ such that its closure $S\subset Y$ is $v$-anti-ample. Thus the flip exists by Theorem \ref{t-sflip}.  
By Theorem \ref{t-st4} the above sequence terminates with the required dlt model which we again denote by $(Y,\Delta'_Y)$.  Since $(X'_k, \Delta'_k)$ is klt and $\Q$-factorial over $\eta$, we can assume that $p$ is an isomorphism over $\eta$. Set $h = f' \circ p \colon Y \to Z$. 

We will run a $(K_Y+\Delta'_Y)$-mmp  over $Z$ where $(Y,\Delta' _Y=p_*^{-1}\Delta'_k +{\rm Ex}(p))$ is dlt. Contractions exist for the same reason as before (see Proposition \ref{p-bpf-families}). Since $K_{Y_{\eta}} + \Delta'_{Y_{\eta}}$ is ample over $Z_{\eta}$, the contracted locus is contained in the fibre over $s \in \Spec R$. In particular, the necessary flips exist by Claim \ref{c-central}. The mmp terminates by Theorem \ref{t-st4}.

Hence, we can assume that $K_Y+\Delta'_Y$ is nef over $Z$. {By Proposition \ref{p-bpf-families2},} $K_Y+\Delta'_Y$ is semiample over $\Spec R$.
Replacing $Y$ by the image of the associated semiample fibration, we can assume that $K_Y + \Delta'_Y$ is relatively ample. We claim that $Y\to Z$ is small, and hence $X _{k+1}:=Y$ is the required flip.
To this end, note that if $p \colon W\to X _{k}$ and $q \colon W\to Y$ is the normalisation of the graph of the rational map $X _{k}\dasharrow Y$, then we may write 
\[
 p^*(K_{X _{k}}+\Delta _k)-q^*(K_Y+\Delta'_Y) = E
 \] 
 where $p_*E=0$ (as $X _{k}\to Z$ is small) and $-E$ is relatively {ample} over $Z$. By the negativity lemma,  $E\geq 0$. On the other hand, by construction, if $F$ is $p$-exceptional but not $q$-exceptional, then by definition of $\Delta'_Y$ we have
\[
{\rm mult}_F(E)={\rm mult}_F(p^*(K_{X _{k}}+\Delta _k))-{\rm mult}_F(q^*(K_Y+\Delta'_Y))\leq 1-1=0,
\]
and so, in fact, ${\rm mult}_F(E) = 0$. This contradicts $-E$ being relatively ample over $Z$.

\end{proof}
In the proof we used the following technical lemma.
\begin{lemma} \label{l-weird-bertini} Let $C$ be a regular affine curve defined over an algebraically closed field $k$ of positive characteristic $p>0$. Let $s \in C$ be a closed point and let $\eta \in C$ be the generic point. Let $(X,S+B)$ be a $\Q$-factorial dlt pair of absolute dimension four and projective over the curve $C$ such that $S = \lfloor S + B \rfloor = \Supp\, X_s$ and $B$ has standard coefficients. Let $f \colon X \to Z$ be a small contraction with $\rho(X/Z)=1$ and exceptional locus ${\rm Ex}(f)$ contained in $X_s$. Suppose that every irreducible component $S_i$ of $S$ is $f$-numerically trivial.

 Then there exist $\Q$-Cartier Weil divisors $H_i$ for $i \in \{1,2\}$ such that $H_1$ and $H_2$ are $f$-ample and $f$-anti-ample, respectively, $H_1+H_2$ is $f$-numerically trivial, and $(X,S+B+H_1+H_2)$ is dlt over $\eta$.
 
 The same statement holds if $X$ is defined over $C=\Spec R$ for a DVR $R$ of mixed characteristic $(0,p>0)$.
\end{lemma}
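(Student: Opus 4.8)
The plan is to take $H_1$ to be a general member of a very ample linear system on $X$ and $H_2$ a general member of $|f^{*}A-H_1|$ for a suitably ample Cartier divisor $A$ on $Z$; the substance of the proof is checking that this last linear system is nonempty and that the dlt condition survives over $\eta$. First I would set up the geometry over $\eta$: since ${\rm Ex}(f)\subseteq X_s$ and $X_s$ is contracted to the closed point $s$, the morphism $f$ is an isomorphism over $C\setminus\{s\}$, in particular over $\eta$. Thus $X_\eta\cong Z_\eta$, a $\Q$-Cartier divisor on $X$ is automatically relatively ample over the isomorphism locus of $f$ (so the $f$-ampleness conditions really only constrain the behaviour along $X_s$), and restricting the dlt pair $(X,S+B)$ to the generic fibre gives a klt pair $(X_\eta,B_\eta)$ with $K_{X_\eta}+B_\eta=(K_X+S+B)|_{X_\eta}$ — here I use $S|_{X_\eta}=0$, since $S=\Supp\,X_s$ is vertical, and $\lfloor B\rfloor=0$, since $\lfloor S+B\rfloor=S$.

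Next, fix an ample Cartier divisor $G$ on $X$ (available since $X$ is projective over the affine curve $C$) and an ample Cartier divisor $A$ on $Z$, and pick $m\gg0$ with $mG$ very ample; let $H_1\in|mG|$ be general. Then $H_1$ is $f$-ample, hence $-H_1$, and anything relatively numerically equivalent to it, is $f$-anti-ample because $\rho(X/Z)=1$. Since $\OO_X(f^{*}A)=f^{*}\OO_Z(A)$, the projection formula gives
\[
H^{0}\!\left(X,\OO_X(f^{*}A-H_1)\right)=H^{0}\!\left(Z,\OO_Z(A)\otimes f_{*}\OO_X(-H_1)\right),
\]
and $f_{*}\OO_X(-H_1)$ is a nonzero coherent ideal sheaf on $Z$ (over the isomorphism locus of $f$ it is $\OO(-H_1)$), so after enlarging $A$ the twist $\OO_Z(A)\otimes f_{*}\OO_X(-H_1)$ is globally generated and in particular has a nonzero section. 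Therefore $|f^{*}A-H_1|\neq\emptyset$; let $H_2$ be general. Then $H_1$ and $H_2$ are effective Cartier divisors (hence $\Q$-Cartier Weil divisors), $H_1+H_2\sim f^{*}A$ is $f$-numerically trivial, and $H_2\equiv_f-H_1$ is $f$-anti-ample.

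It remains to verify that $(X,S+B+H_1+H_2)$ is dlt over $\eta$. Because $C$ is affine, general members of linear systems on $X$ (resp.\ on $Z$) restrict to general members of the corresponding systems on $X_\eta$ (resp.\ $Z_\eta$); so $H_1|_{X_\eta}$ is a general member of the very ample system $|mG|_{X_\eta}|$, and — enlarging $A$ once more so that $\OO_Z(A)\otimes f_{*}\OO_X(-H_1)$ restricts to a very ample sheaf on $Z_\eta$ — $H_2|_{X_\eta}$ is a general member of a very ample system on $X_\eta\cong Z_\eta$. Adding a general member of a base-point-free linear system to a dlt pair preserves the dlt property (a standard application of Bertini's theorem, using that log resolutions are available), so applying this twice to the klt pair $(X_\eta,B_\eta)$ shows $(X_\eta,B_\eta+H_1|_{X_\eta}+H_2|_{X_\eta})$ is dlt; since $S|_{X_\eta}=0$, this is precisely the statement that $(X,S+B+H_1+H_2)$ is dlt over $\eta$.

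The main obstacle is the nonemptiness of $|f^{*}A-H_1|$: the class $f^{*}A-H_1$ is negative on the curves contracted by $f$, so it cannot be handled by positivity on $X$ itself; the key is that pushing forward to $Z$ and twisting by the arbitrarily positive divisor $A$ turns it into a globally generated sheaf. The rest — the isomorphism over $\eta$, the validity of Bertini over the infinite field $k(\eta)$, and the Cartier bookkeeping — is routine.
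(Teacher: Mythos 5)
Your construction of $H_1,H_2$ differs from the paper's in a technical but useful way to note: the paper takes $H_{i,\eta}$ to be a general member of the ample system $|L_i|_{X_\eta}|$ on the \emph{generic fibre} and then defines $H_i$ as the closure of $H_{i,\eta}$ in $X$ (so $H_i-L_i$ is vertical, hence $f$-numerically trivial), whereas you produce a genuine element $H_2\in|f^{*}A-H_1|$ on $X$ via the projection formula. Either arrangement delivers the numerical conclusions, and your observation that $H_1+H_2\sim f^{*}A$ descends to $Z$ for free is a mild simplification. The substance of the lemma, however, is the dlt assertion, and there your argument has a real gap.

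You assert that "adding a general member of a base-point-free linear system to a dlt pair preserves the dlt property" as a "standard application of Bertini's theorem." That is standard in characteristic~$0$, but it is \emph{not} standard in characteristic~$p$, and avoiding it is the whole point of how the paper structures the proof. Two concrete obstructions: (1) $X_\eta$ is a klt threefold, not a smooth one, so the general member $H_{1,\eta}$ meets ${\rm Sing}\,X_\eta$; on a log resolution $\mu\colon Y_\eta\to X_\eta$ the relevant linear system $\mu^{*}|L|$ is only base-point-free and $\mu$-trivial, not ample, and Bertini-smoothness/Bertini-SNC in char~$p$ can genuinely fail for base-point-free non-ample systems. (2) $X_\eta$ is defined over the imperfect field $k(\eta)$, where char~$p$ Bertini statements require extra justification. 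The paper sidesteps both issues: it shrinks $C$ to $C\setminus\{s\}$ so that it still works with a fourfold over the algebraically closed field $k$, bounds the non-strongly-F-regular locus to dimension $\le 1$ by localizing at codimension-two points and invoking \cite{satotakagi}, applies the Bertini theorem for F-regularity \cite{SZ13} (which handles the singular locus where ordinary Bertini is problematic), uses Serre vanishing to make $H_2|_{H_1}$ general, and then uses inversion of adjunction (Corollary~\ref{c-inversion-of-adjunction}) to reduce the remaining SNC check to the generic points of $H_1\cap H_2$, which lie in the smooth locus — and only there does it appeal to an ordinary Bertini theorem, via \cite{JS12}. Your proof would need an argument of comparable care to replace the one-line appeal to Bertini.

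A secondary point: you pass from "$(X_\eta, B_\eta+H_{1,\eta}+H_{2,\eta})$ is dlt" (a statement about the threefold generic fibre over $k(\eta)$) directly to "$(X,S+B+H_1+H_2)$ is dlt over $\eta$" (a statement about the fourfold over a neighbourhood of $\eta$ in $C$). This spreading-out step requires a log resolution of the ambient fourfold — divisors over $X$ that dominate $C$ have the same discrepancy as their restrictions over $\eta$, and those that do not dominate $C$ live over finitely many closed points and are discarded after shrinking $C$ — and it should be spelled out. The paper avoids needing this because, after shrinking $C$, it proves dlt-ness of the fourfold pair directly.
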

\begin{proof}
Pick an $f$-ample Cartier divisor $D$ on $X$ and an ample Cartier divisor $A$ on $Z$. Set $D_i = (-1)^{i+1}D$ and $L_i := D_i + m_if^*A$ for $m_2 \gg m_1 \gg 0$ and $i \in \{1,2\}$. We claim that there exist effective divisors $H_{i,\eta} \in |L_i|_{X_{\eta}}|$ such that $(X_{\eta}, B|_{X_{\eta}} + H_{1,\eta}+H_{2,\eta})$ is dlt. Assuming this claim, we pick $H_i$ to be the closures of $H_{i,\eta}$ in $X$. Then $H_{i} \sim_{\Q} L_i + \sum a_i S_i$ for irreducible divisors $S_i$ of $S$ and some $a_i \in \Q$. Since $S_i$ are $f$-numerically trivial, we get that $H_1$ is $f$-ample, $H_2$ is $f$-anti-ample, and $H_1 + H_2$ is $f$-numerically trivial.

Thus, we are left to show the claim. In the mixed characteristic case it follows automatically from the standard Bertini results in characteristic zero. Hence we can assume that we are in the purely positive characteristic case. First, we replace $C$ by $C \,\backslash\, \{s\}$, and so we can assume that $S=0$ and $L_i$ are very ample. Let $V \subseteq X$ be the subset of points on which $(X,B)$ is strongly F-regular. By standard argument, $V$ is open. Now, by localising at codimension two points and applying \cite[Theorem 5.7]{satotakagi}, we get that $Z := X \, \backslash\, V$ is at most one dimensional. 

Let $H_{1}$ be a general member of $|L_1|$. By \cite[Corollary 6.10]{SZ13}, $(V,B|_V+H_1|_V)$ is purely F-regular (note that \cite{SZ13} uses a different name: divisorially F-regular), and hence plt with $H_1|_V$ normal. Since $H_1 \cap Z$ is zero-dimensional, we can replace $C$ by an open subset, so that $(X,B+H_1)$ is plt and $H_1$ is normal.

Let $H_2$ be a general member of $|L_2|$. Since $m_2 \gg m_1$, Serre's vanishing implies that $H^0(X, L_2) \to H^0(H_1, L_2|_{H_1})$ is surjective, and so $H_2|_{H_1}$ is a general member of $L_2|_{H_1}$. By the same argument as above, we have that $(X,B+H_1+H_2)$ is plt outside $H_1 \cap H_2$ up to replacing $C$ by an open subset. Write $K_{H_1} + B_{H_1} = (K_X+B+H_1)|_{H_1}$. By \cite[Theorem 5.9]{satotakagi} and inversion of adjunction for threefolds, we get that $(H_1, B_{H_1}+H_2|_{H_1})$ is plt. In particular, the inversion of adjunction for fourfolds (Corollary \ref{c-inversion-of-adjunction}) and the standard argument implies that the only log canonical centres of $(X,B+H_1+H_2)$ are the generic points of $H_1\cap H_2$. The generic points of $H_1\cap H_2$ are contained in the smooth locus $X^{\rm sm} \subseteq X$, and hence $(X,B+H_1+H_2)$ is simple normal crossing at them by the standard Bertini theorem applied to the smooth locus of $H_2$ (cf.\ \cite[Theorem 3]{JS12}).
\end{proof}

Let us point out that some of the assumptions on the existence of log resolutions of singularities may be weakened. 
\begin{conjecture}[Embedded resolution of singularities] \label{c-1} Let $X$ be a regular quasi-projective variety and $Z$ a closed subscheme, then there exists a projective birational morphism $\nu \colon X'\to X$ such that the schemes associated to ${\rm Ex}(\nu)$ and $\nu ^{-1}(Z)$ are divisors (in particular $\nu ^{-1}I_Z$ is locally free)  and $\nu ^{-1}Z\cup {\rm Ex}(\nu)$ has simple normal crossing support.
\end{conjecture}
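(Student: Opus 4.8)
The plan is to follow Hironaka's strategy for embedded resolution, in the algorithmic form developed by Villamayor and by Bierstone--Milman, and to carry out a principalization of the ideal sheaf $I_Z$ on the smooth ambient space $X$ by a finite sequence of blow-ups along smooth centres contained in the locus where $I_Z$ is ``worst''. Since an embedded resolution of $Z_{\red}$ together with such a principalization yields the statement, it is enough to produce $\nu$ with $\nu^{-1}I_Z$ invertible and $\nu^{-1}Z\cup{\rm Ex}(\nu)$ of simple normal crossing support. In characteristic zero this is Hironaka's theorem; in characteristic $p>0$ it is known for $\dim X\le 3$ (Abhyankar, Cossart--Piltant, Cutkosky), so the real content here is the passage from dimension three to dimension four.

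The engine is the usual induction on dimension. To each point $x\in X$ one attaches the order $t(x)={\rm ord}_x I_Z$ (more finely, the Hilbert--Samuel function); the maximal-value locus $Z_{\max}\subseteq X$ is closed, and on the complement of its preimages the transform of $I_Z$ already has smaller order, so one works locally near a point of maximal order $t$. One then looks for a smooth hypersurface $H$ of \emph{maximal contact} through $Z_{\max}$ -- one containing $Z_{\max}$ and whose strict transform keeps containing the maximal-order locus under every permissible blow-up -- and restricts the problem to principalizing a ``companion/coefficient'' ideal $\mathcal{C}$ on the three-dimensional $H$, where the known lower-dimensional resolution applies. Resolving the monomial part of $\mathcal{C}$ against the accumulated exceptional divisor then forces $t$ to drop after finitely many blow-ups; one induces on $t$, with termination controlled by the lexicographic decrease of $(t,\ \text{the resolution invariant of }\mathcal{C})$.

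The hard part, and the reason the statement is still only a conjecture when $p>0$, is that maximal contact need not exist in positive characteristic: there are hypersurfaces (in the tradition of R.\ Narasimhan and T.-T.\ Moh, systematized later by Kawanoue, Hauser and others) for which no smooth hypersurface remains in the maximal-order locus under all permissible blow-ups, so the dimensional induction as stated fails exactly in dimension $\ge 4$. To circumvent this one would have to import one of the positive-characteristic substitutes: Abhyankar's ``good point / bad point'' dichotomy together with a proof that the bad locus has dimension $\le n-2$ (for $n=4$ a surface, hence resolvable by Cossart--Piltant), plus a projection argument \`a la Cossart--Piltant governed by the Hironaka invariant $\iota$; or Hironaka's differential/additive-group saturation producing a weaker contact object; or the idealistic-filtration programme of Kawanoue--Matsuki. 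Each of these amounts to controlling the ``bad directions'', where the order stays constant along a non-smooth arc and naive blow-ups do not decrease the invariant -- precisely the phenomenon that is invisible for threefolds, and where I expect essentially all of the difficulty to concentrate.

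Finally, I would note that the application here only requires Conjecture~\ref{c-1} with $\dim X=4$; even though one might hope that the subschemes $Z$ arising in the minimal model program (non-snc loci, non-klt loci) are of a restricted shape, a priori they are arbitrary closed subschemes, so the full four-dimensional embedded resolution appears to be needed, and there is no shortcut through alterations or de Jong-type modifications because the $\nu$ we must produce is birational.
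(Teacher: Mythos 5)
The statement you were asked about is not proved in the paper at all: it is stated as Conjecture~\ref{c-1} and serves purely as a hypothesis, which Proposition~\ref{p-resolutions} then propagates to log resolutions of varieties birational to a smooth fourfold, and which Theorems~\ref{t-mmp+} and~\ref{t-mmp-families+} assume in various forms. So there is no proof in the paper against which to compare your argument, and, correctly, you do not actually give one either: your text is a survey of the characteristic-zero algorithm (order/Hilbert--Samuel invariant, maximal contact, coefficient ideals, induction on dimension) together with an accurate explanation of exactly where it breaks in characteristic $p$ --- the failure of maximal contact in the sense of Narasimhan--Moh-type examples --- and a list of the candidate substitutes (Abhyankar's good/bad points, Cossart--Piltant projections, Hironaka's saturations, Kawanoue--Matsuki's idealistic filtrations). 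That assessment is consistent with the paper's treatment; your remark that only the case $\dim X=4$ with $X$ smooth and $Z$ an arbitrary closed subscheme is needed, and that alterations or de Jong-type modifications cannot substitute for a birational $\nu$, also matches how the conjecture is used. The only adjustment I would ask for is presentational: do not label this a proof proposal, since no step of it closes the four-dimensional case; it is a correct statement of why the conjecture is open, which is precisely why the authors state it as a conjecture rather than a theorem.
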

From this conjecture we can deduce the existence of log resolutions for varieties birational to a regular variety.
\begin{proposition} \label{p-resolutions} Assume that Conjecture \ref{c-1} holds for a regular projective variety $X$ and all of its subschemes. Let $Y$ be a quasi-projective variety birational to $X$. Then for any scheme $W\subset Y$, there exists a proper birational morphism from a regular variety $\mu\colon  Y'\to Y$ such that schemes associated to ${\rm Ex }(\mu)$ and $\mu ^{-1}(W)$ are divisors and ${\rm Ex}(\mu) \cup \mu ^{-1}(W)$ has simple normal crossing support.
\end{proposition}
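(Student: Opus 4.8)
The plan is to descend the resolution problem on $Y$ to the embedded resolution problem on the fixed smooth $X$ by means of the graph of the birational map. Fix a birational map $X\dasharrow Y$ and let $\Gamma\subseteq X\times_k Y$ be the closure of its graph, with projections $p\colon\Gamma\to X$ and $q\colon\Gamma\to Y$; both are projective birational. Since $q$ is projective birational it is a blow-up of a coherent ideal sheaf on $Y$, and after a finite sequence of blow-ups of $\Gamma$ along suitable coherent ideal sheaves---each of which, by the fact that a composition of blow-ups of a noetherian scheme is again a blow-up, keeps $\Gamma$ a blow-up of $X$---I may assume that both ${\rm Ex}(q)$ and $q^{-1}(W)$ are supports of effective Cartier divisors $D_q$ and $D_W$ on $\Gamma$. (That blowing up the ideal sheaf of ${\rm Ex}(q)$ makes the composite to $Y$ have divisorial exceptional locus, and that this can be combined with making $q^{-1}(W)$ Cartier, is an elementary computation using that the total transform of an ideal under its own blow-up is invertible.) In particular $p\colon\Gamma\to X$ is projective birational, hence $\Gamma=\mathrm{Bl}_{\mathcal I}X$ for a coherent ideal sheaf $\mathcal I\subseteq\mathcal O_X$.

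\textbf{Applying Conjecture \ref{c-1}.} Next I would set $\mathcal M:=p_*\bigl(\mathcal O_\Gamma(-D_q-D_W)\bigr)\subseteq p_*\mathcal O_\Gamma=\mathcal O_X$ (the equality holds since $p$ is proper birational and $X$ is normal), so that $\mathcal M\cdot\mathcal O_\Gamma\subseteq\mathcal O_\Gamma(-D_q-D_W)$, and apply Conjecture \ref{c-1} to $X$ and the closed subscheme $Z:=V(\mathcal I\cdot\mathcal M)$. This yields a projective birational $\nu\colon X'\to X$ with $X'$ smooth, with $\nu^{-1}(\mathcal I\cdot\mathcal M)\cdot\mathcal O_{X'}$ invertible---hence, $X'$ being integral, both $\nu^{-1}(\mathcal I)\cdot\mathcal O_{X'}$ and $\nu^{-1}(\mathcal M)\cdot\mathcal O_{X'}$ invertible---and with $D:={\rm Ex}(\nu)\cup\nu^{-1}(Z)$ a simple normal crossing divisor. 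Since $\nu^{-1}(\mathcal I)\cdot\mathcal O_{X'}$ is invertible, the universal property of the blow-up gives a factorization $\nu=p\circ s$ with $s\colon X'\to\Gamma$ projective birational. Put $Y':=X'$ and $\mu:=q\circ s\colon Y'\to Y$.

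\textbf{Verification.} It remains to check that $\mu$ has the required properties. It is proper and birational because a factor of a proper birational morphism through a separated morphism is proper birational. Its scheme-theoretic preimage $\mu^{-1}(W)=V\bigl(\mu^{-1}(I_W)\cdot\mathcal O_{Y'}\bigr)$ is an effective Cartier divisor, since $\mu^{-1}(I_W)\cdot\mathcal O_{Y'}=s^{-1}\bigl(q^{-1}(I_W)\cdot\mathcal O_\Gamma\bigr)\cdot\mathcal O_{Y'}=\mathcal O_{Y'}(-s^*D_W)$ is invertible. For the exceptional locus, the key elementary point is that a local section of a birational morphism of varieties is a local isomorphism; applying this to $p\circ s=\nu$ gives ${\rm Ex}(s)\subseteq{\rm Ex}(\nu)$. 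Combining ${\rm Ex}(\mu)\subseteq{\rm Ex}(s)\cup s^{-1}({\rm Ex}(q))$ with $s^{-1}({\rm Ex}(q))=\Supp(s^*D_q)$, $\mu^{-1}(W)=\Supp(s^*D_W)$, and $\nu^{-1}(\mathcal M)\cdot\mathcal O_{X'}\subseteq\mathcal O_{X'}(-s^*(D_q+D_W))$ (both invertible, so $\Supp(s^*(D_q+D_W))\subseteq\nu^{-1}(V(\mathcal M))$), one obtains ${\rm Ex}(\mu)\cup\mu^{-1}(W)\subseteq{\rm Ex}(\nu)\cup\nu^{-1}(V(\mathcal M))\subseteq D$. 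As a closed subset of $D$ of pure codimension one is a union of irreducible components of $D$, it would follow that ${\rm Ex}(\mu)\cup\mu^{-1}(W)$ is a simple normal crossing divisor, once we know it has pure codimension one.

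\textbf{The main obstacle.} The delicate point is exactly this purity: a priori ${\rm Ex}(\mu)$ could contain a component of codimension $\geq 2$ coming from a small contraction hidden in $s$ (for instance a small resolution of a singularity of $\Gamma$), even though it is contained in the divisor $D$. I would handle this by one additional modification: replace $Y'$ by its blow-up $Y''$ along the reduced ideal sheaf of ${\rm Ex}(\mu)$; the ``section is a local isomorphism'' argument shows that the exceptional locus of $Y''\to Y$ is exactly the exceptional divisor of $Y''\to Y'$, hence a genuine divisor. Since $Y'$ is a blow-up of $X$, so is $Y''$, and one re-applies Conjecture \ref{c-1} to $X$ (enlarging $Z$ to also contain the images of ${\rm Ex}(\mu)$ and $\mu^{-1}(W)$, so that the previously established properties persist); a straightforward argument shows that this has to be iterated only finitely often before ${\rm Ex}(\mu)\cup\mu^{-1}(W)$ is a simple normal crossing divisor, which completes the proof. (When $Y$ is normal and $\mathbb Q$-factorial this last step is unnecessary, since then every small birational contraction onto $Y$ is an isomorphism and ${\rm Ex}(\mu)$ is automatically pure of codimension one.)
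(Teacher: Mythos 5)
Your approach is genuinely different from the paper's. The paper first applies Conjecture~\ref{c-1} to $X$ with $Z$ the indeterminacy locus of $X \dasharrow Y$, obtaining a smooth $X'$ and a morphism $f'\colon X' \to Y$; it then applies Conjecture~\ref{c-1} a \emph{second} time, to $X'$, with the subschemes ${\rm Ex}(f')$ and $(f')^{-1}(W)$. Writing $g\colon Y' \to X'$ for the output of the second application and $\mu = f'\circ g$, one has ${\rm Ex}(\mu) = {\rm Ex}(g)\cup g^{-1}({\rm Ex}(f'))$, both pieces of which the conjecture directly delivers as divisors, so purity is automatic. You instead work on the graph $\Gamma$ and apply the conjecture only once on $X$ itself, which is actually closer to the proposition's stated hypothesis (the paper's second application is on $X'$, not $X$, a mild abuse of the stated assumption). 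The price of that economy is exactly the purity problem you flag at the end.

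However, the fix you sketch for that problem is not a proof. Blowing up $Y'$ along ${\rm Ex}(\mu)$ makes the exceptional locus of $Y''\to Y$ divisorial, but $Y''$ is then singular, so you must resolve again; re-applying Conjecture~\ref{c-1} to $X$ with an enlarged ideal produces a new $\tilde X \to X$, and the induced $\tilde X \to \Gamma$ (or $\tilde X \to Y''$) can once more contain a small contraction over a singular point of the middle scheme, recreating a codimension-$\ge 2$ component of the exceptional locus of $\tilde X\to Y$ that does not lie in any divisorial part of it. You give no decreasing quantity or any other reason the process stops; the sentence ``a straightforward argument shows that this has to be iterated only finitely often'' asserts precisely the point that needs to be proved. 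That is a genuine gap, and it is not a small one: it is the reason the paper performs the second application of the conjecture on $X'$ rather than staying on $X$. To make your argument complete you would either need to supply an actual termination argument for the iteration, or else, as in the paper, first produce the morphism $f'\colon X'\to Y$ and then apply the conjecture to $X'$ and the subschemes ${\rm Ex}(f')$, $(f')^{-1}(W)$, accepting that the hypothesis must then be read as covering smooth blow-ups of $X$ and not merely $X$ itself.
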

\begin{proof} Let $f \colon X \dasharrow Y$ be a birational map and let $Z$ be the indeterminacy locus of $f$. More precisely, let $V$ be the normalisation of the graph of $f$ with projections $p\colon V\to X$ and $q\colon V\to Y$. If $H$ is a very ample divisor on $Y$ and $H_1,\ldots , H_{d+1}\in |H|$ are general elements, where $d=\dim Y$, then $Z$ is cut out by $H'_i=p_*q^*H_i\in |H'|$, where $H'$ is the strict transform of $H$. 

 Let $\nu\colon X'\to X$ be the resolution of $Z$ given by Conjecture \ref{c-1}. Then $\nu ^*H'=F+M$ and $\nu ^* H'_i=F+M_i$ where % the base locus of {\color{red} the linear series spanned by the} $M_i$ contains no divisors. We may repeat this procedure and after finitely many steps (since $X$ is Notherian and hence $\mathcal I _Z$ is finitely generated), we may assume that $\nu ^*H'=F+M$ where
 $F$ is a simple normal crossings divisor and the divisors $M_1,\ldots ,M_{d+1}$ give rise to a base point free linear series. Consider the corresponding morphism $f'\colon X'\to Y$.

We will now apply Conjecture \ref{c-1} to ${\rm Ex}(f')$ and $(f')^{-1}(W)$. Let $g\colon Y'\to X'$ and $\mu \colon Y'\to Y$ be the induced morphisms. Since ${\rm Ex}(\mu)={\rm Ex }(g)\cup g^{-1}({\rm Ex}(f'))$, then ${\rm Ex}(\mu)$ is a divisor, and so is $\mu ^{-1}(W)=g^{-1}((f')^{-1}(W))$. It is clear that their union has simple normal crossings.
\end{proof}

\section{Liftability to characteristic zero}

The goal of this section is to show Theorem \ref{t-mmp-liftability} and Theorem \ref{t-calabi-yau-liftability}. Throughout this section $k$ is a perfect field of characteristic $p>0$. 

Although we do not know that plt centres of four-dimensional pairs are normal in general, we can show this in the setting of lifting using deformation theory. This will be important in the later parts of this section.  
 \begin{lemma}\label{l-S3}
Let $\XX\to \Spec R$ be a projective morphism from a normal scheme to a complete DVR $(R,\fram)$ of mixed characteristic $(0,p>5)$ with perfect residue field. Suppose that the special fibre $X$ is three-dimensional, reduced, and $(\XX, X)$ is plt. Then $X$ is normal.

Moreover, if $(\XX, X)$ admits a log resolution of singularities, then the assumption that $(R,\fram)$ is complete may be dropped.
\end{lemma}
\begin{proof}
As for the last part: the completion $\hat{R} \to R$ is faithfully flat (see \cite[Tag 00MC]{stacks-project}), so we can replace $R$ by $\hat{R}$ and assume that $(R,\fram)$ is complete (see \cite[Tag 033G]{stacks-project}). The plt-ness of $(\XX,X)$ is preserved by the same argument as in \cite[Lemma 2.7]{BMPSTWW20} (here we use the existence of a resolution).

Let $\nu:X^\nu\to X$  be the normalisation. Since $(\XX, X)$ is plt, Lemma \ref{c-nuuh} shows that the special fiber $X=\XX_\mathfrak m$ is normal up to universal homeomorphism. By localising at points of $X$ of dimension one and applying \cite[Corollary 7.17]{BMPSTWW20} we get that that $X$ is normal except at possibly finitely many points; explicitly let $Z\subset X^\nu$ be a finite set of points such that $\nu |_{X^\nu\setminus Z}$ is an isomorphism onto its image. We then have that 
\[
K_{X^\nu}=(K_\XX+X)|_{X^\nu}
\]
is $\Q$-Cartier.  Moreover, as $(\XX, X)$ is plt, we get that $X^\nu$ is klt, and so Cohen-Macaulay (\cite[Corollary 1.3]{ABL20}). 

Let $U=X^\nu\setminus Z$. Since $X$ lifts to any order (to $X_i = \mathcal{X} \times_R R/\fram^{i+1}$) and $U \simeq \nu(U) \subseteq X$, we have that $U$ also lifts to any order. Explicitly, let $U_i$ be the induced compatible lifts of $U$ to $R/\fram^{i+1}$.

By \cite[Lemma A.23]{zdanowicz17}, $X^\nu$ also lifts to any order. For the convenience of the reader we explicate this result. Consider short exact sequences
\[
0\to \mathcal O _{U}\to \mathcal O _{U_i}\to\mathcal O _{U_{i-1}}\to 0
\]
of sheaves of rings on $|U|$. Define a sheaf $\OO_{X^{\nu}_i}$ of abelian groups on $|X^{\nu}|$ by the formula:
\[
\OO_{X^{\nu}_i} = i_*\OO_{U_i},
\]
where $i \colon U \to X^{\nu}$ is the inclusion of topological spaces. %; note that $\OO_{X^{\nu}_i}$ inherits the structure of a sheaf of rings from $\OO_{U_i}$. 
Pushing forward the above short exact sequence of abelian groups by $i$ and using normality of $X^{\nu}$ we get:
\begin{equation} \label{eq:lift-ses-flatness}
0\to \mathcal O _{X^\nu}\to \mathcal O _{X^{\nu}_i}\to\mathcal O _{X ^\nu _{i-1}}\to R^1i_*\OO_U=0.
\end{equation}
Here, $R^1i_*\OO_U$ is calculated in the category of abelian groups, but the answer is the same when it is calculated in the category of coherent sheaves (for example thanks to Čech resolution, cf.\ \cite[Tag 09V2]{stacks-project}), and so it is zero as $X^\nu$ is $S_3$ and $Z$ has codimension $\geq 3$. Finally, $\mathcal O _{X^{\nu}_i}$ inherits a structure of a ring from $U_i$, and so defines a scheme $X^{\nu}_i$ lifting  $X^{\nu}$ over $R/\fram^{i+1}$ (here, the flatness follows from the exactness of (\ref{eq:lift-ses-flatness})).  

Let $j=\nu \circ i: U\to X$. Then we have compatible homomorphisms
\[ \mathcal O _{X_i}\to j_* \mathcal O _{U_i}=\nu _* \mathcal O _{X ^\nu _i},
\]
and so compatible maps of schemes $X^{\nu}_i \to X_i$. Hence, we can apply \cite[Tag 09ZT and Tag 09ZW(8)]{stacks-project}, to get a finite map of schemes $\psi \colon \overline{\XX} \to \XX$ which reduces to $X^{\nu}_i \to X_i$ modulo $\fram^{i+1}$.

Moreover, $\psi \colon \overline{\XX} \to \XX$ is birational. Indeed, this is equivalent to showing that the coherent sheaf $ \psi_*\OO_{\overline{\XX}}/\OO_{\XX}$ is not supported on all of $\XX$, which can be checked after localising at a generic point $\eta_X$ of $X \subseteq \XX$. As $ \psi_*\OO_{\overline{\XX}}/\OO_{\XX}$ is zero on $\eta_X$, this yields a finite module $M$ over a Noetherian $R$-algebra $A = \OO_{\XX,\eta_X}$ such that $M/\fram M=0$ (equivalently, $\fram M = M$), and so there exists $f \in 1+\fram$ satisfying $fM=0$ by Nakayama's lemma (see \cite[Tag 00DV(1)]{stacks-project}). In particular, $\Ann(M) \neq 0$, and so $\Supp\, M \neq \Spec A$ as $A$ is integral.   

% by Lemma \ref{l-formal-birationality} applied to $\psi_*\OO_{\overline{\XX}}/\OO_{\XX}$ and a sufficiently small affine neighbourhood in $\XX$ of some point $x \in Z$. 

Since $\XX$ is normal, $\psi \colon \overline{\XX} \to \XX$ must thus be an isomorphism, and so $X\simeq X^{\nu}$ is normal concluding the proof of the lemma.
\qedhere

% Since $\overline{\XX} \to \XX$ is an isomorphism over     

% If $\hat \XX$ is the formal neighborhood of $X\subset \XX$, then the above homomorphisms induce a morphism $\hat \nu:\hat X ^\nu \to \hat \XX$ (cf. [Sta, 09ZT]). Let $\mathcal L$ be a relatively ample Cartier divisor on $\XX$ and $\mathcal  L^\nu=\hat \nu\mathcal  L$ the corresponding line bundle on $\hat X ^\nu$. Since $X^\nu \to X$ is finite, $\mathcal  L^\nu|_{X^\nu}$ is ample and hence so is $\mathcal L^\nu$. Since $\XX$ is normal, $\hat X ^\nu \to \hat \XX$ is an isomorphism and hence $X=X^\nu$ and the lemma is proven.
\end{proof}

\subsection{Liftability of minimal models to characteristic zero}

\begin{defn}
Let $(R, \fram)$ be a DVR of mixed characteristic and with residue field $k$. We say that a projective scheme $X$ over $k$ \emph{lifts over $R$} if there exists a flat projective morphism $\XX \to \Spec R$ with the central fibre $\XX_{\fram}$ isomorphic to $X$.

We say that a projective scheme $X$ defined over $k$ \emph{lifts to characteristic zero} if there exists a DVR $(R, \fram)$ of mixed characteristic and with residue field $k$ such that $X$ lifts over $R$.
\end{defn} 
Note that if $X$ is normal, then so is $\XX$. Indeed, $\XX$ is regular in  neighbourhoods of regular points of $X$ (cf.\ \cite[Tag 00NU]{stacks-project}),
and so it is $G_1$, and since $X$ is $S_2$ and Cartier, $\XX$ is $S_3$. 

In what follows we shall often write $S = \Spec R$.

%The assumption that $R$ is complete is needed as we apply methods of deformation theory such as the following result:
%  if there exists a complete DVR $(R,\mathfrak m)$ of mixed characteristic $(0,p)$ such that $R/\mathfrak m\cong k$ and a flat projective morphism $\mathcal X \to S:={\rm Spec }(R)$ such that the central fiber $\mathcal X _0$ is isomorphic to $X$.
% \commentbox{Jakub: I think we want to assume that the fraction field of $R$ is of charactersitic zero. Also, we probably want $R$ to be complete (e.g. to lift line bundles). CH: Better now?}
\begin{lemma}\label{l-lblift}
Let $(R, \fram)$ be a complete DVR of mixed characteristic $(0,p)$ with residue field $k$. Let $X$ be a projective scheme over $k$ and which lifts to $\XX$ over $R$. Further, suppose that $H^2(X, \mathcal O _X)=0$.

Then any line bundle $L$ on $X$ lifts to a line bundle $\mathcal L$ on $\XX$ such that $\mathcal L|_X=L$ (in particular, $N^1(\XX/\Spec R) \to N^1(X)$ is surjective). If moreover $L$ is ample, then so is $\mathcal L$. 
\end{lemma}
\begin{proof} Let $X_n=\XX \times _S {\rm Spec }(R/\mathfrak m ^{n+1})$ and suppose that $L$ on $X$ lifts to a line bundle $L_n$ on $X_n$. By \cite[Theorem 6.4]{hartshorne_deformation}, the obstruction to lifting 
$L_n$ on $X_n$ to $L_{n+1}$ on $X_{n+1}$ lies in $H^2(X, \mathcal O _X)$ and hence vanishes by assumption. 
By \cite[Tag 08BE]{stacks-project}, there is a line bundle $\mathcal L$ on $\XX$ such that $\mathcal L|_{X_n}\cong L_n$ for every $n>0$.

The last assertion follows from the fact that ampleness may be checked over closed points (see \cite[Tag 0D3A]{stacks-project}).
%When $L$ is ample, the assertion follows from\cite[Tag 089A]{stacks-project}.
%By \cite[Tag 089A]{stacks-project}, there is an ample line bundle $\mathcal L$ on $\XX$ such that $\mathcal L|_{\XX _n}\cong L_n$ for every $n>0$.
 % [Achinger-Witaszek-Zdanowicz I, Lemma 4.1.1]. 
% \commentbox{Jakub: we should also apply [Stacks Project, Tag 08BE] to say that the line bundle lifts algebraically (and not just to the formal lift). Here we need that $R$ is complete.}
\end{proof}
% Recall the following result from \cite{Wit21}.
% \begin{thm}\label{Wit21} Let $\XX\to S:={\rm Spec }(R)$ be a normal scheme over a DVR $(R,\mathfrak m)$ of mixed characteristic $(0,p)$.
% Suppose that $X_{\mathfrak m}$ is normal up to a universal homeomorphism $\nu:X_{\mathfrak m}^\nu \to X_{\mathfrak m}$ and $L$ is a line bundle such that $L|_{X_\eta}$ and $L|_{X_{\mathfrak m}^\nu}$ are semiample, then $L$ is semiample over $S$.
% \end{thm}
% \begin{proof} By \cite{Keel99}, $L|_{X_{\mathfrak m}}$ is semiample, and we conclude by \cite[Theorem 1.2]{Wit21}.
  
% \end{proof}
\begin{lemma}\label{l-cartier} Let $\XX$ be a normal scheme over a DVR $(R, \fram)$ of characteristic $(0,p)$ for $p\geq 0$. Suppose that the central fibre $X$ of $\XX$ is a normal $S_3$ scheme. 

Let $\mathcal D$ be a Weil divisor on $\XX$ whose support does not contain $X$ and such that $D=\mathcal D|_X$ is $\Q$-Cartier. Further, assume that there is a closed subset $Z\subset \XX$ satisfying ${\rm codim}(Z\cap X,X)\geq 3$ and for which  $D|_{\XX \setminus Z}$ is $\Q$-Cartier. Then  $\mathcal D$ is also $\Q$-Cartier. 
\end{lemma}
\noindent Note that since $X$ is normal, it is regular in codimension 1 and as observed above, so is $\XX$ (cf.\ \cite[Tag 00NU]{stacks-project}). Therefore $\mathcal{D}|_X$ is defined by taking the closure of the restriction
to the regular locus.
 \begin{proof}
  See \cite[12.1.8]{KM92} and  \cite[Proposition 3.1]{dFH11}. 
 \end{proof}
 \begin{corollary}\label{c-a} Let $\XX$ be a normal scheme over a DVR $(R, \fram)$ of characteristic $(0,p)$ for $p> 5$. Suppose that the central fibre $X$ is a three-dimensional terminal variety. Then
 \begin{enumerate}
     \item $\mathcal X$ is Cohen-Macaulay, $K_\XX$ is $\Q$-Cartier and if $X$ is $\Q$-factorial,  then so is $\mathcal X$,
     \item if $X$ is $\Q$-factorial and log resolutions of all log pairs with the underlying variety being birational to $\XX$ exist (and are constructed by a sequence of blow-ups along the non-snc locus), then $(\XX, X)$ is plt and $\mathcal X$ is terminal.  
 \end{enumerate}
 \end{corollary}
\noindent Note that the terminality of $\XX$ in (2) is not used later on, but we believe that it is of independent interest.
% \noindent Part (2) would be standard if we knew the existence of terminalisations; alas for that one needs to run an MMP on a log resolution without boundary which is not known to be valid (cf.\ Theorem \ref{t-mmp+}). Therefore, we provide an ad-hoc proof.
% \todo{Jakub: do we even use terminality in (2) later on? We should still keep it anyway, but  if we do not need it, it might be worth adding a sentence to that effect. CH. I don't think we use it. However it is interesting per se (Def of terminal/can. sings are terminal/can. was a big deal in char 0) and it also it is interesting to see that def of terminal CY's are again terminal CY's. NOt sure what sentence to add though....} 
 \begin{proof}
   By \cite[Corollary 1.3]{ABL20}, $X$ is Cohen-Macaulay and as $X$ is a Cartier divisor, $\XX$ is also Cohen-Macaulay.
  Since $X$ is terminal, it is normal and  regular in codimension 2, thus so is $\XX$ (cf.\ \cite[Tag 00NU]{stacks-project}). 
  
  %By Lemma \ref{l-cartier}, it follows that $kK_\XX$ is Cartier. %Note that $K_{X_\eta}\sim _\Q 0$ by abundance in characteristic 0. By Theorem \ref{Wit21}, $\pm kK_\XX$ is semiample and so $K_\XX\sim _\Q 0$. 
  
  For the rest of (1), pick a divisor $\mathcal D$ whose support does not contain $X$ and such that $\mathcal D|_X$ is $\Q$-Cartier. We claim that then $\mathcal D$ is also $\Q$-Cartier. Replacing $\mathcal D$ by a multiple, we may assume that $\mathcal D|_X$ is Cartier. Notice that if $U=\XX \setminus \mathcal Z$ is the regular locus, then $\mathcal D$ is Cartier on $\XX \setminus \mathcal Z$ and ${\rm codim}( Z, X)\geq 3$ where $Z=\mathcal Z\cap X$. Hence the hypotheses of Lemma \ref{l-cartier} are satisfied, and so $\mathcal D$ is Cartier. 
  
  In particular, the above paragraph shows that $K_{\XX}$ is $\Q$-Cartier, as $K_X$ is $\Q$-Cartier and $K_\XX|_X=K_X$.
 Suppose now that $X$ is $\Q$-factorial and let $\mathcal D$ be a $\Q$-divisor on $\XX$. We need to show that $\mathcal D$ is $\Q$-Cartier. Since $X \subseteq \XX$ is Cartier and irreducible, we may assume that the support of $\mathcal D$ does not contain $X$. Since $X$ is $\Q$-factorial, we have that $\mathcal D|_X$ is $\Q$-Cartier, and so by the above paragraph, $\mathcal D$ is $\Q$-Cartier as well, concluding the proof of the $\Q$-factoriality of $\XX$.\\
  
  %We can apply the above paragraph to $\mathcal D=K_\XX$. Indeed, $\mathcal D|_X=K_\XX|_X=K_X$ as $X$ is Cartier, and since $X$ is terminal, $K_X$ is $\Q$-Cartier. Hence by what we have seen above $K_\XX$ is also $\Q$-Cartier.
  %Suppose now that $X$ is $\Q$-factorial and let $\mathcal D$ be a $\Q$-divisor on $\XX$. We need to show that $\mathcal D$ is $\Q$-Cartier. Since $X \subseteq \XX$ is Cartier and irreducible, we may assume that the support of $\mathcal D$ does not contain $X$. Since $X$ is $\Q$-factorial, we have that $\mathcal D|_X$ is $\Q$-Cartier. Again by what we have seen above $\mathcal D$ is $\Q$-Cartier, and so $\XX$ is $\Q$-factorial.\\
  
  Now we move to (2) assuming the existence of appropriate resolutions. Note that as $X$ is terminal, it is klt, and so $(\XX,X)$ is plt by Corollary \ref{c-inversion-of-adjunction}. 
  
  Next, we construct a terminalisation of $\mathcal{X}$. Due to the limitations of our mmp results (Theorem \ref{t-mmp+} and Theorem \ref{t-mmp-families+}), we need to proceed in a non-standard way. More precisely, we pick a log resolution $\phi : \YY \to \XX$ of $(\mathcal{X},X)$ and run a $(K_{\YY} + Y)$-mmp over $\XX$ where $Y$ is the reduced special fibre (see Theorem \ref{t-mmp-families+}). The output of this mmp is a terminalisation of $\XX$, because:
  \begin{enumerate}
      \item All $\phi$-exceptional divisors on $\YY$ lying over $X$ are contracted as $(\XX, X)$ is plt. These divisors have log discrepancy with respect to $(\XX,0)$ greater than one, as $(\XX, X)$ is plt and $X$ is Cartier.
      \item A $\phi$-exceptional divisor on $\YY$ which is dominant over $\Spec R$ is contracted if and only if its log discrepancy with respect to $(\XX, 0)$ is greater than one (this can be checked over the generic point of $\Spec R$).
  \end{enumerate}
Hence, by replacing $(\YY,Y)$ by the output of this MMP, we may assume that $\YY \to \XX$ is a terminalisation and $(\YY,Y)$ is plt. Moreover, $Y = \phi^{-1}_*X$.

Now, assume that $X$ is terminal, but suppose by contradiction that $\XX$ is not terminal, or equivalently that $\phi \colon \YY \to \XX$ is not an isomorphism. Let $\mathcal{E}$ be a $\phi$-exceptional prime divisor on $\YY$. Since $Y$ is the reduced special fibre and $\Spec R$ admits only one closed point, we have that $E = 
\mathcal{E} \cap Y$ is non-empty.

By (1), $\mathcal{E}$ is dominant over $\Spec R$. In particular, the irreducible subscheme $\phi(\mathcal{E})$ is also dominant over $\Spec R$ and of codimension at least two in $\XX$. Hence, $\phi|_Y(E) \subseteq X$ is of codimension at least two in $X$, and so $E \subseteq Y$ is exceptional over $X$. This contradicts the fact that $X$ is terminal as 
\begin{align*}
K_Y &= K_{\YY}|_Y = (\phi^*K_{\XX} + G)|_Y = (\phi|_Y)^{-1}K_X + G|_Y \\
K_Y &= (\phi|_Y)^*K_X + F,
\end{align*}
where $G \leq 0$ as $\YY \to \XX$ is a terminalisation, and $F \geq 0$ with $\Supp\, F = \mathrm{Exc}(\phi|_Y)$ (in particular, it contains the exceptional divisor $E$ with positive coefficient) as $X$ is terminal.

 \end{proof}

%\todo{Jakub: I think we want that $N^1(\mathcal{X}/S) = N^1(X)$. This follows automatically if we know the surjectivity of $N^1(\mathcal{X}/S) \to N^1(X)$.}
 \begin{proof}[{Proof of Theorem \ref{t-mmp-liftability}}] By \autoref{c-a},  $\mathcal X$ is Cohen Macaulay,  $\Q$-factorial, terminal and  $(\XX,X)$ is plt. %Further,  $(\XX ,X+\mathcal A)$ is also plt for a sufficiently ample divisor $\mathcal A$ on $\XX$ (cf.\ \cite[Lemma 2.29]{BMPSTWW20}, \cite[Lemma 9.2]{birkar13}; here we use that there exists a log resolution with an ample exceptional divisor).
 Let $\eta$ be the generic point of $S=\Spec R$, then we claim that $K_{X_\eta}$ is pseudo-effective.
 %Jakub:  How do you know that $(\XX_{\eta}, \epsilon \mathcal{A}|_{\XX_{\eta}})$ has a minimal model? It could be a Mori fibre space. In fact, here's the problem: our Theorem \ref{t-mmp-families+} assumes that $\kappa \geq 0$, which a priori need not be the case here. However, I think that we assumed $\kappa \geq 0$ because of the issues with bpf theorem over imperfect fields. In particular, in mixed characteristic our argument should work in general, am I right? We could also just show that the MMP (even in the non-pseudo-effective case) can be run here directly in this particular case. Let me know what you think.
 
 To see this let  $\mathcal{A}$ be an ample Cartier divisor on $\mathcal{X}$ and let $t\in \mathbb Q$ be the pseudo-effective threshold (or $0$, if $K_{\XX}$ is already pseudo-effective) 
 so that $0\leq \kappa (K_{\XX_\eta}+t\mathcal{A}_\eta)$ (the fact that $t$ is rational follows by running a characteristic zero $K_{X_{\eta}}$-mmp with scaling of $\mathcal{A}_{\eta}$ and the cone theorem applied to a resulting Mori fibre space if $t>0$).
 We wish to show that $t=0$. 
  We may assume that $(\XX,X+t\mathcal A)$ is plt and $(X,tA)$ is terminal where $A=\mathcal A |_X$.
 By Theorem \ref{t-mmp-families+}, we may run the $K_{\XX}+X+t\mathcal{A}$ minimal model program with scaling over $S$
  which terminates with a minimal model $\phi \colon \XX\dasharrow \XX'$. Let $X'$ be the special fibre of $\XX'$ over $\Spec R$; equivalently $X' = \phi_*X$. By definition, $X'$ is Cartier and $K_{\XX'}+X'+t\mathcal A'$ is nef over $S$. %Since the normalization $\nu:X'^\nu\to X'$ is an isomorphism on the complement of finitely many points, 
  As $(\XX ',X'+t\mathcal A')$ is plt (and so is $(\XX',X')$ in view of $\XX'$ being $\Q$-factorial),  Lemma \ref{l-S3} shows that $X'$ is normal.
  Note that as $X$ is terminal, 
 \[
 \phi|_X \colon X\dasharrow X'
 \]
 does not extract any divisors. Indeed, let $\mathcal A'=\phi_* \mathcal A$ and  $A'=\mathcal A'|_{X'}$, then every divisor on $X'$, say $E \subseteq X'$, has log discrepancy $\leq 1$ with respect to $(X',tA')$, and so log discrepancy $\leq 1$ with respect to $(X,tA)$  (running a $(K_{\XX}+X+t\mathcal A)$-mmp can only increase the discrepancies of $(\XX,X+t\mathcal A)$, and hence cannot decrease those of $(X,tA)$ by adjunction). Thus it is impossible for $E$ to be extracted, because then $E$ is not a divisor on $X$, and so by definition of terminality has log discrepancy $>1$.

  Note that $K_{X'}+tA'$ and $K_{\XX'_\eta }+t\mathcal A'_\eta$ are nef and if $t>0$, then $K_{X' }+t A'$ is big. It then follows that
  \[ (K_{\XX'_\eta }+t\mathcal A'_\eta)^3= (K_{X'}+tA')^3>0,\]
 so that $K_{\XX'_\eta }+t\mathcal A'_\eta$ is also big, a contradiction to $t$ being positive.
 Therefore, we may assume that $t= 0$ and hence $K_{\XX'}+X'$ is nef over $S$.
  Since $ K_{X'}=(K_{\XX'}+X')|_{X'}$ is $\Q$-Cartier and
 $(\XX ',X')$ is plt, we get that $X'$ is klt and $K_{X'}$ is nef. It follows that $X\dasharrow X'$ is a (possibly non-$\Q$-factorial) minimal model  concluding the proof of (1).\\

 We move on to the proof of (2a). Suppose now that $N^1(\XX/S)\to N^1(X)$ is surjective. In particular, $\overline{\mathrm{NE}}(\mathcal{X}/S)=\overline{\mathrm{NE}}(X)$; indeed this is true because only curves which are projective over $S$ (and hence lie on the special fibre $X$) are used in the definition of $\overline{\mathrm{NE}}$ (cf.\ \cite{tanaka16_excellent} or \cite{BMPSTWW20}) and numerical equivalence relations on $X$ and $\XX$ are the same in view of $N^1(\XX/ S)\to N^1(X)$ being surjective.  
 
 Our goal is to show that any sequence of $K_X$-mmp steps 
 \[
 X \xdashrightarrow{h} X_1 \dashrightarrow X_2 \dashrightarrow \ldots \dashrightarrow X_k
 \]
 (where $X_k$ is a minimal model of $X$) lifts to a sequence of $K_{\XX}$-mmp steps 
 \[
 \XX \xdashrightarrow{\phi} \XX_1 \dashrightarrow \XX_2 \dashrightarrow \ldots \dashrightarrow \XX_k.
 \]
 Consider the first step $h \colon X \dashrightarrow X_1$. Let $A$ be an ample $\Q$-divisor on $X$ such that $(K_X+A)^{\perp} \subseteq \overline{\mathrm{NE}}(X)$ is equal to the extremal ray $\Sigma$ defining $h$. Since $N^1(\XX/S)\to N^1(X)$ is surjective, there exists an ample $\Q$-divisor $\mathcal{A}$ on $\XX$ such that $\mathcal{A}|_X \equiv A$. Then $(K_{\XX}+X+\mathcal{A})^{\perp} = \Sigma \subseteq \overline{\mathrm{NE}}(\mathcal{X}/S)$. 
 
 We set $\phi \colon \mathcal X \dashrightarrow \mathcal X_1$ to be the step of a $(K_{\XX}+X)$-mmp (equivalently, $K_{\XX}$-mmp) corresponding to $\Sigma$. In particular, $(\XX_1, (\XX_1)_{\fram})$ is plt, where $(\XX_1)_{\fram}$ is the special fibre, hence $(\XX_1)_{\fram}$ is normal by Lemma \ref{l-S3}. As explained before, the terminality of $X$ ensures that $\phi|_X \colon X \dashrightarrow (\XX_1)_{\fram}$ does not extract any divisors.
 
 We claim that $\phi|_X \colon X \dashrightarrow (\XX_1)_{\fram}$ and $h \colon X \dashrightarrow X_1$ coincide.  To this end, let $f \colon X \to Z$ be the contraction of $\Sigma \subseteq \overline{\mathrm{NE}}(X)$ induced by $K_{X}+A$ and let $\psi \colon \XX \to \ZZ$ be the contraction of $\Sigma \subseteq \overline{\mathrm{NE}}(\mathcal{X}/S)$ induced by $K_{\XX}+X+\mathcal{A}$. Note that $\psi|_X \colon X \to \ZZ_{\fram}$ contracts exactly $\Sigma$ and no other numerical class of a curve, hence we have a factorisation
 \begin{equation} \label{eq:lifting-factorisation}
 \psi|_X \colon X \xrightarrow{f} Z \to \ZZ_{\fram}
 \end{equation}
 and $Z$ is the normalisation of $\ZZ_{\fram}$.
 
 Before proceeding with the proof of the claim, we will show that $\ZZ_{\fram} \simeq Z$ and $\psi|_X = f$.  To this end, we may assume that $(X,A)$ is klt (cf.\ \cite[Lemma 2.29]{BMPSTWW20} or \cite[Lemma 9.2]{birkar13}; this uses the existence of a resolution with an ample exceptional divisor). Hence, $(Z, f_*A)$ is also klt, and so both $X$ and $Z$ have rational singularities by \cite[Corollary 1.3]{ABL20}. Therefore, $R^1f_*\OO_X=0$, thus $R^1(\psi|_X)_*\OO_{\XX}=0$ as well, which in turn implies that $R^1\psi_*\OO_{\XX}=0$ by upper-semicontinuity. Hence, by the projection formula
     \[ R^1\psi_*\OO_{\XX}(m(K_{\XX}+\mathcal{A}))=0
     \]
     for $m$ divisible enough, and so
     \[
     \psi_*\OO_{\XX}(m(K_{\XX}+\mathcal A)) \to (\psi|_X)_*\OO_X(m(K_X+A))
     \]
     is surjective. In particular, $\ZZ_{\fram} \simeq Z$ and $\psi|_X = f$. 
     
Now we consider the following cases:
\begin{enumerate}
    \item $\phi \colon \mathcal{X} \dasharrow \mathcal{X}_1$ is a divisorial contraction (that is, $\psi=\phi$): then $\psi|_X=f$ is also divisorial by semicontinuity of fibre dimension; in particular, the first step $h \colon X \dashrightarrow X_1$ of the $K_X$-mmp is the contraction $f \colon X \to Z$, and hence it lifts to $\phi=\psi \colon \XX \to \ZZ$. 
    \item $\phi \colon \mathcal{X} \dasharrow \mathcal{X}_1$ is a flip over the flipping contraction $\psi \colon \XX \to \ZZ$ (in particular, $\ZZ$ is not $\Q$-Gorenstein) and
    \begin{enumerate}
        \item $f \colon X \to Z$ is divisorial: then $Z$ is $\Q$-Gorenstein,  Cohen-Macaulay (as it has rational singularities), and $\mathrm{codim}\,  \mathrm{Sing}(Z) \geq 3$ (as $X$ is terminal). Since $Z \simeq \ZZ_{\fram}$, we get that $\ZZ$ is $\Q$-Gorenstein by Lemma \ref{l-cartier} (cf.\ the proof of Corollary  \ref{c-a}) which is impossible. This shows that $f$ can never be divisorial when $\phi$ is a flip.
        \item $f \colon X \to Z$ is small: then, $(\XX_1)_{\fram}$ is normal (cf. Lemma \ref{l-S3}), and  since $\phi|_X \colon X \dashrightarrow (\XX_1)_{\fram}$ does not extract any divisors, the induced map $(\XX_1)_{\fram} \to Z$ is also small. As $(\XX_1)_{\fram}$ is $\Q$-Gorenstein and $K_{(\XX_1)_{\fram}} = K_{\XX_1}|_{(\XX_1)_{\fram}}$ is ample over $Z$, we have that $(\XX_1)_{\fram}$ is the canonical model of $X$ over $Z$. Hence it is a flip, $(\XX_1)_{\fram} \simeq X_1$ and $\phi|_X=h$.
    \end{enumerate}
\end{enumerate}

Having lifted $h \colon X \dashrightarrow X_1$, we would like to replace $X$ and $\XX$ by $X_1$ and $\XX_1$, respectively and repeat the above procedure. To be able to do this, we deduce the surjectivity of $N^1(\mathcal{X}_1/S) \to N^1(X_1)$ from that of $N^1(\mathcal{X}/S) \to N^1(X)$. Pick a line bundle $L$ on $X_1$. We need to show that $L^m \equiv \mathcal{L}|_{X_1}$ for some line bundle $\mathcal L$ on $\XX_1$ and $m \in \mathbb{N}$. This is a standard consequence of the base point free theorem as explained by the following case by case analysis:
\begin{enumerate}
    \item $\phi=\psi \colon \mathcal{X} \to \mathcal{Z}$ is a divisorial contraction (in particular, $Z=X_1$): then up to replacing $L$ by a multiple, we may pick $\mathcal{L}'$ on $\XX$ such that $f^*L \equiv \mathcal{L}'|_X  $. Since $\mathcal{L}'$ is numerically trivial over $\ZZ$, it descends, up to some multiple, to a line bundle $\mathcal{L}$ on $\ZZ$ by the base point free theorem (cf.\  Proposition \ref{p-bpf-families}), and $\mathcal{L}|_Z \equiv L$ by construction.
    \item $\phi \colon \XX \dashrightarrow \XX_1$ is small: then since $K_{\XX_1}|_{X_1} = K_{X_1}$ is ample over $Z$ and $\rho(X_1/Z)=1$, we can replace $L$ by $L+aK_{X_1}$ for some $a \in \Q$, and assume that $L\equiv_Z 0$. Then by the base point free theorem (cf.\  Proposition \ref{p-bpf-families}), up to some multiple, $L$ descends to a line bundle $L_Z$ on $Z = \ZZ_{\fram}$. By the same argument as in (1), up to some multiple, there exists $\mathcal{L}_{\mathcal Z}$ on $\ZZ$ such that $\mathcal{L}_{\mathcal Z}|_Z \equiv L_Z$. Then $(\psi^+)^*\mathcal{L}_{\mathcal Z}|_{X_1} \equiv L$, where $\psi^+\colon \XX_1 \to \ZZ$ is the flipped contraction.
\end{enumerate} 
Thus we can repeat the above procedure until we get a lift $\mathcal{X}_k$ of $X_k$. Since $K_{X_k}$ is nef, so is $K_{\mathcal{X}_k}$. Hence $\mathcal{X}_k$ is a minimal model of $\XX$.\\
 
 Next we tackle (2b). By (2a), we can replace $X$ by the output of a $K_X$-mmp, and so assume that $X$ is minimal. Let $h \colon X \dashrightarrow Y$ be a small birational map to another $\Q$-factorial minimal model $Y$. It is well known (cf.\  \cite{Kaw08}) that $X\dasharrow Y$ is a small birational map given by a finite sequence of flops. To see this, 
  %Let $\phi :X\dasharrow X'$ be a birational map, then $\phi$ is an isomorphism in codimension 1.
 % Let $\mathcal X\to S$ be a lift of $X$. 
 pick $A_Y$ an ample Cartier divisor on $Y$ and let $A$ be its strict transform on $X$. Since $X\dasharrow Y$ is small, $A$ is $\Q$-movable and big. Pick $0\leq D\sim _\Q A$. We may
 assume that $(X,\epsilon D)$ is terminal. 
 We claim that for $0<\epsilon \ll 1$ there is a finite sequence of $(K_X+\epsilon D)$-flips which are $K_X$-flops inducing a $(K_X+\epsilon D)$-minimal model $\phi: X\dasharrow X'$.
 We proceed as follows. Suppose that $rK_X$ is Cartier, now
 if $K_X\cdot C\ne 0$ for some curve $C$, then we have $K_X\cdot C\geq 1/r$. If $K_X+tD$ is nef for some $0<t< \epsilon/(6r+1)$, we may replace $\epsilon $ by $t$ and we are done.  If on the other hand $K_X+tD$ is not nef for any $0<t<\epsilon /(6r+1)$, then let $\Sigma$ be a negative extremal ray generated by a curve $C$. Note that $\Sigma$ is also a $(K_X+\epsilon D)$-negative extremal ray and so we may assume that $(K_X+\epsilon D)\cdot C\geq -6$ (cf. \cite[Theorem 1.1]{DW19}).
 But then \[(K_X+t D)\cdot C=(1-\frac t\epsilon)K_X\cdot C +\frac t\epsilon (K_X+\epsilon D)\cdot C\geq (1-\frac t\epsilon)\cdot \frac 1 r -\frac {6t}\epsilon>0.\]
  This is impossible and so $K_X\cdot C=0$. It follows that $\Sigma$ is a $(K_X+\epsilon D)$-negative extremal ray which is $K_X$ trivial. Let $X\dasharrow X^+$ be the corresponding flop, then $K_{X^+}$ is nef and $rK_{X^+}$ is Cartier (cf.\ \cite[Corollary 1.5]{ABL20}). We may replace $X$ by $X^+$ and repeat the procedure which terminates after finitely many steps by Proposition \ref{proposition:AHK} (note in fact that each flip is $D$-negative and hence the flipping locus is always contained in the support of $D$).
 
 Let $\phi: X\dasharrow X'$ be the corresponding minimal model. Since $D$ is movable, $\phi$ is an isomorphism in codimension 1.  Since $K_Y+\epsilon A_Y$ is ample, the (unique) canonical model of $K_X+\epsilon D$ is isomorphic to $Y$ and the induced morphism $X'\to Y$ is a small birational morphism of $\Q$-factorial varieties and hence an isomorphism.
 %\todo{Jakub: I am not sure that this is a flop: I don't see why the map $\phi$ you constructed is $K_X$-trivial in general. I could show this using abundance, but that is not known in dimension three in pos.\ char. We might need to add an assumption on the existence of a good minimal model? {\bf CH:} I think this follows by the usual arguments (of Kawamata). I added the full argument.} 
 Therefore we may assume that $X\dasharrow Y$ is a $K_X$-flop and we must show that $Y$ lifts to $\mathcal Y$ over $R$ and $N^1(\mathcal Y/{\rm Spec}(R))\to N^1(Y)$ is surjective.

 Let $f\colon X\to Z$ be the flopping contraction and $H=f^*H_Z$ the pullback of an ample divisor $H_Z$ on $Z$. We also set $\Sigma\in \overline {\rm NE}(X)$ to be the corresponding $(K_X+A)$-negative extremal ray.
 
 Since $N^1(\XX/S) \to N^1(X)$ is surjective, there exist Cartier divisors $\mathcal A$ and $\mathcal H$ on $\XX$ such that $\mathcal A |_X = A$ and $\mathcal H |_X = H$. %By Proposition \ref{p-lifting-bigness-and-q-movability}, $K_{\XX} + \mathcal A$ is also $\Q$-movable and big. 
 Since $H$ is nef and big, it is easy to see that $\mathcal H _\eta $ is nef and big as well, and hence $\mathcal H$ is big over $S$.
 Replacing $\mathcal H$ by a multiple, we may assume that $\mathcal H+ \mathcal A$ is big.
 Pick $0\leq \mathcal D\sim_{\mathbb{Q}} \mathcal H+ \mathcal A$ an effective $\Q$-divisor whose support does not contain $X$. For $0<\epsilon \ll 1$, we may assume that $(\XX,X+\epsilon \mathcal D)$ is plt and $(X,\epsilon D)$ is terminal where $D=\mathcal D |_X$. Arguing as in the proof of (2a), we have a $(K_\XX+\epsilon \mathcal D)$-flip $\XX \dasharrow \XX ^+$ corresponding to the extremal ray $\Sigma \in \overline {\rm NE}(\XX /S)$
 and $\XX ^+_{\mathfrak m}=Y$. The surjectivity of $N^1(\mathcal Y/{\rm Spec}(R))\to N^1(Y)$ also follows as in the proof of (2a). This concludes the proof of (2b).

Finally, we tackle (3). By (1), we can replace $X$ by a (possibly non-$\Q$-factorial) minimal model and assume that $K_X$ is nef and big. Note however that the lift $\XX$ of $X$ constructed in (1) is $\Q$-factorial. Now $K_{\XX} +X \sim_{\Q} K_{\XX}$ is also nef and big, and so it is semiample by Proposition \ref{p-bpf-families2}. Let $\psi \colon \mathcal{X} \to \mathcal{X}^c$ be the induced birational morphism. Since $(\mathcal{X}^c, (\mathcal{X}^c)_{\fram})$ is plt, Lemma \ref{l-S3} shows that  $(\mathcal{X}^c)_{\fram}$ is normal. In particular, $\psi|_X = f$, where $f \colon X \to X^c$ is the map induced by $K_{X}$. Therefore, $\mathcal{X}^c$ is the lift of $X^c$, which concludes the proof of (3).

% it is semiample by \cite
% Assume that $K_X$ is big. then so is $K_{X}$ and so $K_{X}$ is semiample. %and as $X'^\nu\to X'$ is a universal homeomorphism, then $(K_{\XX'}+X')=K_{X'}$ is semiample.
%  Since $(K_{\XX'}+X')_\eta $ is semiample (by abundance for 3-folds in characteristic 0), it follows
%  that $K_{\XX'}+X'$ is semiample over $S$. Let $X^c={\rm Proj} R(K_{\XX'}+X'/S)$ and $g:X'\to X^c$ the induced morphism. Then $g$ is birational and $K_{\XX ^c}+X^c$ is ample over $S$. Since $({\XX ^c},X^c)$ is plt, by Lemma \ref{l-S3}, $X^c$ is normal. But then $K_{{X^c}}$ is also ample.
 \end{proof}
 \begin{proof}[{Proof of Theorem \ref{t-calabi-yau-liftability}}]
By Lemma \ref{l-lblift}, $N^1(\XX/S) \to N^1(X)$ is surjective. Hence we can conclude by Theorem \ref{t-mmp-liftability}(2b) as $Y$ is a minimal model of $X$.
\end{proof}

\section*{Acknowledgements}{}
We would like to thank Bhargav Bhatt, J\'anos Koll\'ar, Karl Schwede, Zsolt Patakfalvi, Joe Waldron, and Chenyang Xu for comments and helpful suggestions.

The first author was partially supported by NSF research grants no: DMS-1300750, DMS-1952522, DMS-1801851 and by a grant
from the Simons Foundation; Award Number: 256202. He would also like
to thank the Mathematics Department and the Research Institute for Mathematical Sciences,
located Kyoto University and the Mathematical Sciences Research Institute in Berkeley were some of this research was conducted. The second author was supported by the National Science Foundation under Grant No.\ DMS-1638352 at the Institute for Advanced Study in Princeton, and by the National Science Foundation under Grant No.\ DMS-1440140 while the author was in residence at the Mathematical Sciences Research Institute in Berkeley, California, during the Spring 2019 semester. He is currently supported by the NSF research grant no: DMS-2101897.

\bibliographystyle{amsalpha}
\bibliography{final} 

\begin{comment}

\end{comment}

\end{document}